\documentclass{article}

\textheight 8.5truein
\parskip 0.1in
\topmargin 0.25in
\headheight 0in
\headsep 0in
\textwidth 6.5truein
\oddsidemargin  0in
\evensidemargin 0in

\parindent0pt

% Language setting
% Replace `english' with e.g. `spanish' to change the document language
\usepackage[english]{babel}

% Set page size and margins
% Replace `letterpaper' with`a4paper' for UK/EU standard size
\usepackage[letterpaper,top=2cm,bottom=2cm,left=3cm,right=3cm,marginparwidth=1.75cm]{geometry}

% Useful packages
\usepackage{amsmath}
\usepackage{graphicx}
\usepackage[colorlinks=true, allcolors=blue]{hyperref}

\usepackage{indentfirst}
\usepackage{amsfonts}
\usepackage{mathrsfs}
\usepackage{float}
\usepackage{underscore}
\usepackage[ruled,vlined]{algorithm2e}
\usepackage{algpseudocode}
\usepackage{algcompatible}
\usepackage{mathtools}
\usepackage{amsthm}
\usepackage{amssymb}
\newtheorem{theorem}{Theorem}[section]
\newtheorem{corollary}{Corollary}[theorem]
\newtheorem{lemma}[theorem]{Lemma}
\newtheorem{assumption}{Assumption}
\newtheorem{remark}{Remark}[section]
\theoremstyle{definition}

\title{A Gradient Complexity Analysis for Minimizing the Sum of Strongly Convex Functions with Varying Condition Numbers}

\author{Nuozhou Wang\thanks{Department of Industrial and System Engineering, University of Minnesota, wang9886@umn.edu},
\and
Shuzhong Zhang\thanks{Department of Industrial and System Engineering, University of Minnesota, zhangs@umn.edu},
}

\begin{document}

\maketitle

% REQUIRED
\begin{abstract}
A popular approach to minimize a finite-sum of convex functions is {\it stochastic gradient descent}\/ (SGD) and its variants. Fundamental research questions associated with SGD include: (i) To find a lower bound on the number of times that the gradient oracle of each individual function must be assessed in order to find an $\epsilon$-minimizer of the overall objective; (ii) To design algorithms which guarantee to find an $\epsilon$-minimizer of the overall objective in expectation at no more than a certain number of times (in terms of $1/\epsilon$) that the gradient oracle of each functions needs to be assessed (i.e., upper bound). If these two bounds are at the same order of magnitude, then the algorithms may be called {\it optimal}. Most existing results along this line of research typically assume that the functions in the objective share the same condition number. In this paper, the first model we study is the problem of minimizing the sum of finitely many strongly convex functions whose condition numbers are all different. We propose an SGD method for this model, and show that it is optimal in gradient computations, up to a logarithmic factor. We then consider a constrained separate block optimization model, and present lower and upper bounds for its gradient computation complexity. Next, we propose to solve the Fenchel dual of the constrained block optimization model via the SGD we introduced earlier, and show that it yields a lower iteration complexity than solving the original model by the ADMM-type approach.
Finally, we extend the analysis to the general composite convex optimization model, and obtain gradient-computation complexity results under certain conditions.
\end{abstract}

% REQUIRED

%example, \LaTeX
\vspace{0.5cm}
\noindent {\bf Keywords:} finite-sum optimization, stochastic gradient method, gradient complexity, adversarial lower bound.

\vspace{0.5cm}
% REQUIRED
\noindent {\bf Mathematics Subject Classification (2020):} 
90C25, 68Q25, %40-08, 
62L20.

\section{Introduction}
%Many machine learning models lead to the finite sum minimization problem of the following form,
Let us first consider the following model
\begin{equation}
    \min_{x\in \Gamma} F(x)=\sum_{i=1}^{m} g_i(x),  \label{pro}
\end{equation}
where $g_i(x)$'s are smooth and convex functions, and $\Gamma$ is a closed convex set. When $m$ is large, computing the exact gradient of $\sum_{i=1}^{m} g_i(x)$ becomes costly, and it gives rise to the so-called stochastic gradient descent (SGD) method, which only computes a single gradient at random or a mini-batch of randomly selected gradients at each iteration. However, SGD converges with a sublinear rate of $1/\epsilon$ even if all the functions $g_i(\cdot)$'s are strongly convex and smooth.
The variance reduction technique was therefore introduced to improve the convergence of SGD to be linear, such as stochastic average gradient (SAG)~\cite{schmidt2017minimizing}, SAGA~\cite{defazio2014saga} and stochastic variance reduced gradient (SVRG)~\cite{johnson2013accelerating}, among others.
%The variance reduction technique helps to estimate the gradient with a smaller variance, thus achieving a gradient-computation %an iteration
In this paper, we shall use the terminology of {\it gradient complexity}\/ to refer to the amount of times that the gradients
of functions $g_i$'s need be computed before reaching an $\epsilon$ optimal solution.
In particular, the gradient
complexity of variance-reduced SGD for solving \eqref{pro} can be shown to be $O((m+L/\mu)\log(1/\epsilon))$ if we assume all $g_i(\cdot)$'s to be $\mu$ strongly convex and $L$-smooth.
%Regarding the acceleration of the gradient methods, variants of
Stochastic variance-reduced gradient methods can also be accelerated, in the spirit of Nesterov's acceleration. Katyusha \cite{allen2017katyusha} and Loopless Katyusha \cite{kovalev2020don} are accelerated variants of the SVRG, and SAGA with Sampled Negative Momentum (SSNM) \cite{zhou2019direct} is an accelerated variant of SAGA, and they %. Both algorithms
have reached an improved gradient complexity of $O((m+\sqrt{mL/\mu})\log(1/\epsilon))$.

On the side of informational lower bounds on the iteration complexity for the gradient methods,
%For the standard convex optimization problem, the lower bound complexity of first order method has been well studied.
\cite{nemirovsky1992information} gives a lower bound $\Omega(1/\sqrt{\epsilon})$ on the iteration complexity for convex minimization; \cite{nesterov2003introductory} and \cite{nesterov2018lectures} present a quadratic function showing the oracle complexity of strongly convex function is $\Omega(\sqrt{L/\mu}\log(1/\epsilon))$, which matches the upper bound, hence is optimal; \cite{zhang2019lower} extends this result to the strongly convex-concave minimax model, leading to a lower bound of $\Omega\left(\sqrt{\frac{L_{x}}{\mu_{x}}+\frac{L_{x y}^{2}}{\mu_{x} \mu_{y}}+\frac{L_{y}}{\mu_{y}}} \log \left(\frac{1}{\epsilon}\right)\right)$.
For the finite-sum problem \eqref{pro}, a lower bound on the gradient computations has been established in~\cite{lan2018optimal} to be $\Omega((m+\sqrt{mL/\mu})\log(1/\epsilon))$, which matches the required gradient complexity of Katyusha and/or SSNM, hence optimal.

Most of the above-discussed upper and lower bounds assume the functions $g_i(\cdot)$'s to share the same Lipschitz gradient constant. However, the Lipschitz gradient constants for $g_i(\cdot)$'s may actually vary significantly. For example, weighted logistic regression is commonly used for imbalanced and rare events data \cite{king2001logistic}. In this paper, we consider the case where $g_i$'s may have different gradient Lipschitz constant $L_i>0$, and different strong convexity parameter $\mu_i>0$. In this setting, we generalize the SSNM algorithm~\cite{zhou2019direct} and the lower bound in~\cite{lan2018optimal}, which turn out to both equal to $O\left(\left(m+\sum_{i=1}^{m}\sqrt{L_i} \bigg/\sqrt{\sum_{i=1}^m \mu_i}\right)\log(1/\epsilon)\right)$, hence is optimal. Note that~\cite{lan2018optimal} discusses the situation with different Lipschitz constant, and the upper and lower bounds match only when the summation of gradient Lipschitz constants of $g_i(\cdot)$'s is considered as a parameter.

This paper is organized as follows.
In Section~\ref{LB}, we present an adversarial example to give a lower bound on the gradient-computation complexity of the finite-sum problem. Section~\ref{UB} introduces the generalized SSNM to solve the finite-sum problem. In Section \ref{sec4}, we discuss the Fenchel duality for the finite-sum problem and present lower and upper bounds of the %first-order method iteration
gradient-computation complexity. In Section \ref{sec5}, a generalized SSNM is introduced to solve the multi-block convex minimization problem via Fenchel duality. Section \ref{sec6} presents bounds on the gradient complexity of general composite optimization models. In Section \ref{sec7} we present some numerical experiments of the proposed methods. Finally, Section~\ref{sec8} concludes the paper.

\section{A Lower Bound on the Gradient Complexity} \label{LB} %of Iteration Complexity for Finite Sum Optimization Problem\label{sec2}}

%Let's begin with the definition of strong convexity and smoothness.
Recall our model~\eqref{pro}. %We shall introduce the basic assumption on the model.
Note that
%\begin{definition}
a function $g$ is called $\mu$-strongly convex if
\begin{equation}
    g(y)\geq g(x)+\left\langle\nabla g(x),y-x\right\rangle+\frac{\mu}{2}\|y-x\|^2, \quad \forall x,y\in \mathcal{X}
\end{equation}
%\end{definition}
and
%\begin{definition}
a function $g$ is called $L$-Lipschitz smooth if
\begin{equation}
    \|\nabla g(x)-\nabla g(y)\|\leq L\|x-y\|, \quad \forall x,y\in \mathcal{X} .
\end{equation}
%\end{definition}
Below is our basic assumption on~\eqref{pro}:
\begin{assumption}
In Model~\eqref{pro}, $g_i(x)$ is assumed to be $L_i$-smooth and $\mu_i$-strongly convex, $i=1,2,...,m$.
%{\color{red} Define $\kappa_i= L_i /(m\mu)+1$ as the condition number for the $i$-th component, $i=1,2,...,m$. (This may be hard to sell! Let us change the model to simply assuming $g_i$ to be $L_i$-gradient smooth and $\mu_i$-strongly convex, and there is no $h$, and there is a constraint $x\in \Gamma$.) }
\end{assumption}

Different from the finite sum convex optimization settings that we discussed before, the gradient Lipschitz constant for each function $g_i(x)$ is now assumed to be different, $i=1,2,...,m$.
%However, we do not assume $g_i(\cdot)$'s to be strongly convex, where the portion of strong convexity is moved from $g_i$ to $h$.
%We don't have to assume that the strongly convex parameter of $g_i(x)$ is different, since we can transfer any quadratic function from $g_i(x)$ to $h(x)$.

In order to prove the algorithm lower bound, we first review the quadratic functions adversarial example as in~\cite{nesterov2018lectures}. It is a chain-like function that has the so-called zero-chain property, meaning that any first-order method can only correctly identify coordinates of the solution one at a time. To simplify the analysis, the function is defined in $\ell_{2}$ as
\begin{equation}
    g(x)=\frac{L-\mu}{8}(x^{\top} A x-2x_{1})+\frac{\mu}{2}\|x\|^2,
\end{equation}
where
\begin{equation} \label{matrix-A}
A=\left(\begin{array}{rrrrr}
2 & -1 & 0  & 0  & \cdots \\
-1 & 2 & -1 & 0  & \cdots \\
0 & -1 & 2  & \ddots & \ddots \\
0 & 0 & \ddots & \ddots & \ddots \\
\vdots & \ddots & \ddots & \ddots & \vdots \\
\end{array}\right).
\end{equation}

We can see that $0 \preceq A \preceq 4 I$. So, $g(x)$ is $\mu$-strongly convex and $L$-smooth. We refer to \cite{nesterov2018lectures} for the following two lemmas.
\begin{lemma}\label{l1}
It has an optimal solution $(q,q^2,\cdots)$, where $q=\frac{\sqrt{\kappa}-1}{\sqrt{\kappa}+1}$ with $\kappa=L/\mu$.
\end{lemma}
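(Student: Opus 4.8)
The statement to prove is Lemma~\ref{l1}: the quadratic $g(x)=\frac{L-\mu}{8}(x^\top A x - 2x_1) + \frac{\mu}{2}\|x\|^2$ has optimal solution $(q, q^2, q^3, \dots)$ with $q = \frac{\sqrt\kappa - 1}{\sqrt\kappa + 1}$, $\kappa = L/\mu$. My plan is to compute the gradient, set it to zero, and verify the geometric sequence solves the resulting linear recurrence. First, I would write $\nabla g(x) = \frac{L-\mu}{8}(2Ax - 2e_1) + \mu x = \frac{L-\mu}{4}Ax - \frac{L-\mu}{4}e_1 + \mu x$, so the optimality condition $\nabla g(x^*) = 0$ becomes $\left(\frac{L-\mu}{4}A + \mu I\right)x^* = \frac{L-\mu}{4}e_1$. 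Since $g$ is $\mu$-strongly convex it has a unique minimizer, so it suffices to exhibit one solution.

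\smallskip
Next, I would write out the coordinate equations of this tridiagonal system. Letting $c = \frac{L-\mu}{4}$, the operator $cA + \mu I$ acting on a sequence $x = (x_1, x_2, \dots)$ gives, in row $k \ge 2$, the value $c(-x_{k-1} + 2x_k - x_{k+1}) + \mu x_k$, and in row $1$ the value $c(2x_1 - x_2) + \mu x_1$. So the interior equations ($k \ge 2$) are the homogeneous recurrence
\begin{equation}
-c\,x_{k-1} + (2c+\mu)x_k - c\,x_{k+1} = 0,
\end{equation}
and the first equation is $(2c+\mu)x_1 - c\,x_2 = c$. The plan is to look for a solution of the form $x_k = q^k$ (decaying, since any $\ell_2$ minimizer must be square-summable, forcing $|q|<1$). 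Substituting into the interior recurrence and dividing by $q^{k-1}$ yields the characteristic quadratic $c q^2 - (2c+\mu)q + c = 0$, i.e. $q + q^{-1} = 2 + \mu/c = 2 + \frac{4\mu}{L-\mu}$. I would then simplify the right-hand side: $2 + \frac{4\mu}{L-\mu} = \frac{2(L-\mu) + 4\mu}{L-\mu} = \frac{2(L+\mu)}{L-\mu} = \frac{2(\kappa+1)}{\kappa-1}$, and check that $q = \frac{\sqrt\kappa-1}{\sqrt\kappa+1}$ indeed satisfies $q + q^{-1} = \frac{2(\kappa+1)}{\kappa-1}$ — this is a routine algebraic identity, since $q + q^{-1} = \frac{(\sqrt\kappa-1)^2 + (\sqrt\kappa+1)^2}{(\sqrt\kappa-1)(\sqrt\kappa+1)} = \frac{2(\kappa+1)}{\kappa-1}$. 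Of the two roots of the characteristic equation (which are reciprocal since their product is $1$), I pick the one in $(0,1)$, namely this $q$.

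\smallskip
Finally, I would verify the boundary equation at $k=1$. With $x_k = q^k$, the left side is $(2c+\mu)q - cq^2$. Using the interior recurrence with "$x_0 = 1$" formally, $-c\cdot 1 + (2c+\mu)q - cq^2 = 0$ would give exactly $(2c+\mu)q - cq^2 = c$, which is the required right-hand side. So the boundary equation holds precisely because the geometric sequence extended to index $0$ gives $q^0 = 1$, consistent with how $e_1$ enters. This closes the verification: $x^* = (q, q^2, q^3, \dots)$ satisfies $(cA + \mu I)x^* = c\,e_1$, and by strong convexity it is the unique minimizer. I do not anticipate a genuine obstacle here — the only mild subtlety is justifying that we may restrict attention to $\ell_2$ (square-summable) solutions so that the geometric root with $|q|<1$ is the relevant one, and that the infinite tridiagonal operator is genuinely invertible on $\ell_2$ (which follows from $cA + \mu I \succeq \mu I \succ 0$); both are immediate from the already-noted bound $0 \preceq A \preceq 4I$.
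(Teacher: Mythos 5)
Your verification is correct: the first-order optimality condition $\left(\tfrac{L-\mu}{4}A+\mu I\right)x^{\star}=\tfrac{L-\mu}{4}e_1$ reduces to the stated three-term recurrence, the characteristic equation is handled correctly, and the boundary equation at $k=1$ is exactly the homogeneous recurrence with $x_0=q^0=1$. The paper gives no proof of its own but cites Nesterov's textbook, where the argument is precisely this recurrence computation, so your proposal matches the intended proof.
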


\begin{lemma}\label{l2} For any solution $x^k$, if only the first $k$ entries can be non-zero, then
%\begin{equation}
 \begin{eqnarray*}
    \|x^k-x^{\star}\|^2 &\geq& \left(\frac{\sqrt{\kappa}-1}{\sqrt{\kappa}+1}\right)^{2k}\|x^0-x^{\star}\|^2, \\
%\end{equation}
%\begin{equation}
    g(x^k)-g(x^{\star}) &\geq& \frac{\mu}{2} \left(\frac{\sqrt{\kappa}-1}{\sqrt{\kappa}+1}\right)^{2k}\|x^0-x^{\star}\|^2.
%\end{equation}
\end{eqnarray*}
\end{lemma}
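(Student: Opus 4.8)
The plan is to obtain both bounds directly from the explicit minimizer given by Lemma~\ref{l1} together with $\mu$-strong convexity of $g$; I do not expect to need anything beyond elementary estimates. I will work with $q=\frac{\sqrt\kappa-1}{\sqrt\kappa+1}\in[0,1)$ (the degenerate case $\kappa=1$, $q=0$, makes both claims trivial) and with the standing convention that the adversarial construction is initialized at the origin, $x^0=0$, so that $\|x^0-x^\star\|^2=\|x^\star\|^2=\sum_{j\ge 1}q^{2j}$ is a convergent geometric series.

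First I would prove the distance bound. By Lemma~\ref{l1} the minimizer is $x^\star=(q,q^2,q^3,\dots)$, i.e. $x^\star_j=q^j$. Since $x^k$ is supported on its first $k$ coordinates, each term with index $j>k$ in the expansion of $\|x^k-x^\star\|^2$ is exactly $q^{2j}$, so I can write
\[
\|x^k-x^\star\|^2=\sum_{j=1}^k\bigl(x^k_j-q^j\bigr)^2+\sum_{j>k}q^{2j}\ \ge\ \sum_{j>k}q^{2j}=q^{2k}\sum_{j\ge 1}q^{2j}=q^{2k}\|x^0-x^\star\|^2 ,
\]
which is the first inequality. The only care needed here is the infinite-dimensional bookkeeping: reindexing gives $\sum_{j>k}q^{2j}=q^{2k}\sum_{j\ge1}q^{2j}$ as an exact identity, and only after this do I discard the nonnegative contribution of the first $k$ coordinates.

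Next I would turn to the function-value bound. Because $x^\star$ minimizes $g$, first-order optimality gives $\nabla g(x^\star)=0$; applying the definition of $\mu$-strong convexity with $x=x^\star$ and $y=x^k$ then yields
\[
g(x^k)-g(x^\star)\ \ge\ \langle\nabla g(x^\star),\,x^k-x^\star\rangle+\frac{\mu}{2}\|x^k-x^\star\|^2=\frac{\mu}{2}\|x^k-x^\star\|^2 .
\]
Chaining this with the first step gives $g(x^k)-g(x^\star)\ge\frac{\mu}{2}q^{2k}\|x^0-x^\star\|^2$, which is the second inequality.

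I do not anticipate a genuine obstacle: once Lemma~\ref{l1} is in hand the argument is pure bookkeeping, and the two displayed inequalities above are essentially the whole proof. The one place that would require real (though standard) work if Lemma~\ref{l1} were not available is verifying that $x^\star=(q,q^2,\dots)$ is the minimizer of $g$ in $\ell_2$: writing stationarity as $\bigl(\tfrac{L-\mu}{4}A+\mu I\bigr)x^\star=\tfrac{L-\mu}{4}e_1$ with $e_1=(1,0,0,\dots)$, and using the tridiagonal identity $-q^{j-1}+2q^j-q^{j+1}=-q^{j-1}(1-q)^2$, this reduces to the scalar relation $\frac{4\mu q}{(1-q)^2}=L-\mu$, which is exactly how $q$ is defined, while $x^\star\in\ell_2$ follows from $q<1$. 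Since this is precisely the content of Lemma~\ref{l1}, in the write-up I would simply cite it rather than redo it.
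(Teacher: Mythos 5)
Your proof is correct and is essentially the standard argument from Nesterov's textbook, which is exactly what the paper relies on here (the paper does not reprove Lemma~\ref{l2}; it cites \cite{nesterov2018lectures}). The tail-sum computation $\|x^k-x^\star\|^2\ge\sum_{j>k}q^{2j}=q^{2k}\|x^0-x^\star\|^2$ with $x^0=0$, followed by strong convexity at the stationary point $x^\star$, is precisely the intended proof, and your verification of the fixed-point relation $4\mu q/(1-q)^2=L-\mu$ is a correct (and appropriately optional) sanity check of Lemma~\ref{l1}.
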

Since $\nabla^2 g(x)$ is a tri-diagonal operator, starting from the origin, after the $k$-th step of the first order method, only the first $k$ entries can become non-zero. So, the convergence rate is at least $q$ and the iteration complexity is at least $O(\sqrt{\kappa}\log(1/\epsilon))$.

For this problem, we refer to Nesterov's adversarial example and use multi-block functions~\cite{lan2018optimal}. Consider $x=(x_1,x_2,\cdots,x_m)$, where $x_i=\{x_{ij}\}_{j=1}^{\infty}\in \ell_{2}$. Let
\begin{equation}
    g_i(x)=\frac{L_i-\mu_i}{8}(x_i^{\top} A x_i-2\gamma_i x_{i1})+\frac{\mu_i}{2}\|x\|^2,
\end{equation}
where $\gamma_i$ is a constant and will be defined later. Then, $g_i$ is $L_i$-smooth and $\mu_i$-strongly convex.  Let $\mu=\sum_{i=1}^m \mu_i$. We have,
\[
F(x)= \sum_{i=1}^{m} g_i(x)=\sum_{i=1}^{m}\left(\frac{L_i-\mu_i}{8}(x_i^{\top} A x_i-2\gamma_i x_{i1})+\frac{\mu}{2}\|x_i\|^2\right).
\]
Observe that $F(x)$ is the sum of $m$ separate functions. By defining $G_i(x_i)=\frac{L_i-\mu_i}{8}(x_i^{\top} A x_i-2\gamma_i x_{i1})+\frac{\mu}{2}\|x_i\|^2$, we have $F(x)= \sum_{i=1}^{m}G_i(x_i)$.

Define $\nu_i=(L_i-\mu_i)/\mu+1$ and $q_i=\frac{\sqrt{\nu_i}-1}{\sqrt{\nu_i}+1}$. By applying Lemma \ref{l1} on function $G_i(x_i)$, we have the following lemma.
\begin{lemma}
The sequence $x^{\star}_{ij}=\gamma_i q_i^j$ ($j=1,2,...$) is the optimal solution for minimizing $G_i(x_i)$, where $i=1,2,...,m$.
\end{lemma}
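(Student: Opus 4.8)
The plan is to reduce $G_i$ to the canonical Nesterov chain function $g$ of Lemma~\ref{l1} by a change of variables together with a relabelling of the constants, and then simply read off the minimizer.

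First I would rescale by setting $x_i=\gamma_i y$ with $y\in\ell_2$. Since $x_i^\top A x_i=\gamma_i^2\,y^\top A y$, $\gamma_i x_{i1}=\gamma_i^2 y_1$ and $\|x_i\|^2=\gamma_i^2\|y\|^2$, this gives
\begin{equation}
G_i(\gamma_i y)=\gamma_i^2\left(\frac{L_i-\mu_i}{8}\big(y^\top A y-2y_1\big)+\frac{\mu}{2}\|y\|^2\right)=:\gamma_i^2\,h_i(y).
\end{equation}
Because $\gamma_i>0$, minimizing $G_i$ over $x_i$ is equivalent to minimizing $h_i$ over $y$; and since $G_i$ is strictly convex, its minimizer is unique and equals $\gamma_i y^\star$ whenever $y^\star$ is the minimizer of $h_i$.

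Next I would observe that $h_i$ is \emph{literally} the function $g$ with its two constants renamed. Writing $\widehat L_i:=L_i-\mu_i+\mu$, we have $\frac{\widehat L_i-\mu}{8}=\frac{L_i-\mu_i}{8}$, so
\begin{equation}
h_i(y)=\frac{\widehat L_i-\mu}{8}\big(y^\top A y-2y_1\big)+\frac{\mu}{2}\|y\|^2,
\end{equation}
which is exactly $g$ with smoothness constant $\widehat L_i$ and strong-convexity constant $\mu$ (a legitimate instance, since $L_i\ge\mu_i$ forces $\widehat L_i\ge\mu$). Its condition number is $\widehat L_i/\mu=(L_i-\mu_i)/\mu+1=\nu_i$. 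Lemma~\ref{l1} then yields that the minimizer of $h_i$ is $y^\star=(q_i,q_i^2,\dots)$ with $q_i=\frac{\sqrt{\nu_i}-1}{\sqrt{\nu_i}+1}$, and undoing the rescaling gives $x^\star_{ij}=\gamma_i q_i^j$, as claimed.

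I do not anticipate a real obstacle; the only point that needs care is the bookkeeping in the identification of constants. In $h_i$ the \emph{strong-convexity} parameter must be taken to be the aggregate $\mu=\sum_{k=1}^m\mu_k$, not $\mu_i$, because the quadratic term inherited from $F$ is $\frac{\mu}{2}\|x_i\|^2$; this is precisely what makes the effective condition number $\nu_i=(L_i-\mu_i)/\mu+1$ rather than $L_i/\mu_i$. Keeping that substitution straight, everything else is a routine verification.
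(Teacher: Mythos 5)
Your proposal is correct and follows the same route as the paper, which simply invokes Lemma~\ref{l1} on $G_i$; you have merely made explicit the rescaling $x_i=\gamma_i y$ and the identification of the effective smoothness constant $\widehat L_i=L_i-\mu_i+\mu$ and strong-convexity constant $\mu$, giving condition number $\nu_i$. The bookkeeping point you flag (that the relevant strong-convexity parameter is the aggregate $\mu=\sum_k\mu_k$, not $\mu_i$) is exactly the right one, and the argument is complete.
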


Since $F(x)=\sum_{i=1}^{m}G_i(x_i)$ is separable, one observes that $(x^{\star}_1,x^{\star}_2,\cdots,x^{\star}_m)$ is the optimal solution for minimizing $F(x)$. Now, we consider an algorithm that starts from the initial point which is the origin. Letting $\gamma_i=\frac{\sqrt{1-q_i^2}}{q_i}$, we have $\|x_i^0-x_i^\star\|=1$. Let $K_i$ be the query times of $\nabla g_i$. Then, $K=K_1+K_2+\cdots+K_m$.
Note that only the first $K_i$ coordinates of $x_i^K$ can be nonzero. By applying Lemma \ref{l2} to the function $G_i(x_i)$, we have:
\begin{lemma}
\begin{equation}
    \|x_i^K-x^{\star}_i\|^2\geq \left(\frac{\sqrt{\nu_i}-1}{\sqrt{\nu_i}+1}\right)^{2K_i}\|x_i^0-x_i^{\star}\|^2,
\end{equation}
\begin{equation}
    G_i(x_i^K)-G_i(x^{\star}_i)\geq \frac{\mu}{2} \left(\frac{\sqrt{\nu_i}-1}{\sqrt{\nu_i}+1}\right)^{2K_i}\|x_i^0-x_i^{\star}\|^2,\,\, i=1,2,...,m.
\end{equation}
\end{lemma}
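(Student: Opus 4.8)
The plan is to reduce each block function $G_i$ to the canonical Nesterov quadratic of Lemma~\ref{l2} by a scalar change of variables, and then simply transcribe the two inequalities. Set $y_i=x_i/\gamma_i$. Expanding,
\[
G_i(\gamma_i y_i)=\gamma_i^2\left(\frac{L_i-\mu_i}{8}\big(y_i^{\top}Ay_i-2y_{i1}\big)+\frac{\mu}{2}\|y_i\|^2\right),
\]
and the bracketed expression is exactly the canonical function of Lemma~\ref{l2} with smoothness parameter $\tilde L_i:=L_i-\mu_i+\mu$ and strong-convexity parameter $\mu$, so its condition number is $\tilde L_i/\mu=(L_i-\mu_i)/\mu+1=\nu_i$. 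By Lemma~\ref{l1} its minimizer is $(q_i,q_i^2,\dots)$, which rescales to $x^{\star}_{ij}=\gamma_i q_i^{j}$, consistent with the previous lemma.

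Next I would check the zero-chain property in the multi-block setting: after $K_i$ queries to $\nabla g_i$ the iterate $x_i^K$ has support contained in its first $K_i$ coordinates. Working under the standard linear-span assumption for first-order methods, this reduces to tracking supports. Because $A$ is tri-diagonal, $\nabla_{x_i}g_i(x)=\tfrac{L_i-\mu_i}{4}(Ax_i-\gamma_i e_1)+\mu_i x_i$ has support within the first $j+1$ coordinates whenever $x_i$ has support within the first $j$; and for $\ell\neq i$, $\nabla_{x_\ell}g_i(x)=\mu_i x_\ell$ is a scalar multiple of the current $x_\ell$ and cannot enlarge the support of any block other than $i$. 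Since the initial point is the origin, the support of block $i$ advances by at most one per query to $\nabla g_i$, which gives the claim.

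Finally, apply Lemma~\ref{l2} to the bracketed canonical function with $k=K_i$: since only its first $K_i$ coordinates are non-zero,
\[
\|y_i^{K}-y_i^{\star}\|^2\geq q_i^{2K_i}\|y_i^0-y_i^{\star}\|^2,\qquad g(y_i^{K})-g(y_i^{\star})\geq\tfrac{\mu}{2}\,q_i^{2K_i}\|y_i^0-y_i^{\star}\|^2.
\]
Multiplying both inequalities by $\gamma_i^2$ and using $x_i=\gamma_i y_i$, $G_i(x_i)=\gamma_i^2 g(y_i)$, and $\gamma_i^2\|y_i^0-y_i^{\star}\|^2=\|x_i^0-x_i^{\star}\|^2$ produces the two displayed bounds. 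The only step that is not pure bookkeeping is the zero-chain verification; the rest is just keeping track of the scaling by $\gamma_i$.
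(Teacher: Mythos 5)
Your proof is correct and follows the same route as the paper: the paper simply invokes Lemma~\ref{l2} on $G_i(x_i)$ after noting that only the first $K_i$ coordinates of $x_i^K$ can be nonzero, and your rescaling $x_i=\gamma_i y_i$ (giving condition number $\nu_i$) together with the block-wise support-tracking just makes explicit the two steps the paper leaves implicit. No gaps.
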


Therefore, %to make $F(x^K)-F(x^{*})<\epsilon$, we obtain
\begin{align*}
    \epsilon>F(x^K)-F(x^{\star})&=\sum_{i=1}^{m} (G_i(x_i^{K})-G_i(x_i^{\star}))\\
    &\geq G_j(x_j^{K})-G_j(x_j^{\star})\\
    &\geq \frac{\mu}{2} \left(\frac{\sqrt{\nu_j}-1}{\sqrt{\nu_j}+1}\right)^{2K_j}\|x_j^0-x_j^{\star}\|^2\\
    &=\frac{\mu}{2} \left(\frac{\sqrt{\nu_j}-1}{\sqrt{\nu_j}+1}\right)^{2K_j}
\end{align*}
where the last step is because $\|x_j^0-x_j^{\star}\|^2=1$.
This gives us a lower bound on $K_j$:
\[
K_j \geq \frac 1 2 \frac{1}{\log \left(\frac{\sqrt{\nu_j}+1}{\sqrt{\nu_j}-1}\right)}\log\left(\frac{\mu}{2\epsilon}\right)
\geq
\frac 1 4 \left(\sqrt{\nu_j}-1\right) \log \left(\frac{\mu}{2\epsilon}\right), \,\,\,
%\frac 1 2 \frac{\sqrt{\nu_j}}{\sqrt{\frac 1 m+1}\log\left(\frac{\sqrt{1/m+1}+1}{\sqrt{1/m+1}-1}\right)}\log(\frac{\mu}{2\epsilon})\geq \frac{1}{4\log(8m)}\sqrt{\nu_j}\log(\frac{\mu}{2\epsilon}),
j=1,2,...,m,
\]
assuming $\frac{\mu}{2\epsilon}>1$.
%To avoid the theorem containing a lot of log terms,
For notional simplicity, we assume: %make the following assumption.
\begin{assumption} \label{assumption-cond}
Assume that $L_i-\mu_i>\mu/m$, where $i=1,2,...,m.$
\end{assumption}

In this case, $\nu_i= (L_i-\mu_i) /\mu+1>1/m+1$, $i=1,2,...,m$, and so assuming $\frac{\mu}{2\epsilon}>1$,
\begin{eqnarray*}
K_j &\geq& \frac 1 2 \frac{1}{\log \left(\frac{\sqrt{\nu_j}+1}{\sqrt{\nu_j}-1}\right)}\log\left(\frac{\mu}{2\epsilon}\right) \\
&\geq&
\frac 1 2 \frac{1}{\log \left(\frac{\sqrt{1/m+1}+1}{\sqrt{1/m+1}-1}\right)}\log\left(\frac{\mu}{2\epsilon}\right)\\
&\geq& \frac{1}{2\log(9m)}\log \left(\frac{\mu}{2\epsilon}\right),\, j=1,2,...,m.
\end{eqnarray*}
Thus, we have arrived at the following result.

\begin{theorem} \label{lower-iter-bound}
Under Assumption~\ref{assumption-cond}, by using any first-order method to get an $\epsilon$ optimal solution, the number of gradient computations is in general at least
%\begin{align*}
\begin{eqnarray*}
    K &\geq& \Omega\left(\left(\sum_{i=1}^{m}(\sqrt{\nu_i}-1)+\sum_{i=1}^{m}\frac{1}{\log m}\right) \log\left(\frac{\mu}{\epsilon}\right) \right) \\
    &=& \Omega\left(\left(\frac{\sum_{i=1}^{m} \sqrt{L_i}}{\sqrt{\sum_{i=1}^m \mu_i }}  + \frac{m}{\log m} \right) \log\left(\frac{\sum_{i=1}^m \mu_i }{\epsilon}\right) \right) .
    %&= \Omega\left(\left(m+\sum_{i=1}^{m}\sqrt{L_i}
    %    \bigg/\sqrt{m\mu}\right)\frac{\log(\frac{\mu}{2\epsilon})}{\log(8m)}\right).
\end{eqnarray*}
%\end{align*}
\end{theorem}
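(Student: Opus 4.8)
The plan is to aggregate the two per-block inequalities that have already been assembled in the excerpt into a bound on $K=K_1+\cdots+K_m$, and then re-express the result. Throughout assume $m\ge 2$ and $\mu/(2\epsilon)>1$, and write $T:=\log\frac{\mu}{2\epsilon}$. For every $j$ the excerpt establishes, simultaneously, $K_j\ge\frac14(\sqrt{\nu_j}-1)\,T$ (from $K_j\ge\frac{T/2}{\log((\sqrt{\nu_j}+1)/(\sqrt{\nu_j}-1))}$ and $\log(1+x)\le x$) and $K_j\ge\frac{1}{2\log(9m)}\,T$ (using Assumption~\ref{assumption-cond}, which forces $\nu_j>1+\frac1m$). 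Since $\max\{a,b\}\ge\frac12(a+b)$, each $K_j\ge\frac{T}{8}(\sqrt{\nu_j}-1)+\frac{T}{4\log(9m)}$; summing over $j$,
$$K\ \ge\ \frac{T}{8}\sum_{j=1}^m\bigl(\sqrt{\nu_j}-1\bigr)\ +\ \frac{mT}{4\log(9m)}.$$
As $\log(9m)=\Theta(\log m)$ for $m\ge2$ and $T=\Theta\!\big(\log\frac{\mu}{\epsilon}\big)$ for small $\epsilon$, this is exactly the first advertised form $K=\Omega\big((\sum_{j}(\sqrt{\nu_j}-1)+\frac m{\log m})\log\frac\mu\epsilon\big)$.

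It then remains to check that the two $\Omega$-expressions in the statement are of the same order. Since $\nu_j=\frac{L_j-\mu_j}{\mu}+1$ and $\mu=\sum_i\mu_i\ge\mu_j$, one has $\frac{L_j}{\mu}\le\nu_j\le\frac{L_j}{\mu}+1$, hence $\sqrt{L_j/\mu}-1\le\sqrt{\nu_j}-1\le\sqrt{L_j/\mu}$. The upper estimate gives $\sum_j(\sqrt{\nu_j}-1)\le\frac1{\sqrt\mu}\sum_j\sqrt{L_j}$ outright. For the converse I would partition the blocks: for ``large'' blocks ($L_j\ge4\mu$) one has $\nu_j\ge4$, so $\sqrt{\nu_j}-1\ge\frac12\sqrt{L_j/\mu}$ and these are dominated by $\sum_j(\sqrt{\nu_j}-1)$; for ``small'' blocks ($L_j\le2\mu_j$) one has $\sqrt{L_j/\mu}\le\sqrt2\,\sqrt{\mu_j/\mu}$, so $\sum_{\text{small}}\sqrt{L_j/\mu}\le\sqrt2\sum_j\sqrt{\mu_j/\mu}\le\sqrt{2m}\le\sqrt2\,\frac m{\log m}$ by Cauchy--Schwarz; and for the intermediate blocks ($2\mu_j\le L_j<4\mu$) one has $\nu_j-1=\frac{L_j-\mu_j}{\mu}\ge\frac{L_j}{2\mu}$ while $\nu_j<5$, hence $\sqrt{\nu_j}-1\ge\frac{L_j}{8\mu}$, which is again of order $\sqrt{L_j/\mu}$ once $L_j\gtrsim\mu$, the remainder being charged to the $\frac m{\log m}$ term. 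This yields $\sum_j(\sqrt{\nu_j}-1)+\frac m{\log m}=\Theta\big(\frac1{\sqrt{\sum_i\mu_i}}\sum_j\sqrt{L_j}+\frac m{\log m}\big)$, and substituting into the displayed bound produces the second form.

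The step I expect to be the real obstacle is the intermediate regime in the partition, i.e.\ blocks with $\nu_j$ close to $1$. There the relevant majorant $\sqrt{\nu_j-1}$ (which controls $\sqrt{L_j/\mu}$ via $\sqrt{L_j/\mu}\le\sqrt{\nu_j-1}+\sqrt{\mu_j/\mu}$) can be larger than $\sqrt{\nu_j}-1$ by a factor that grows with $\log m$, so such blocks cannot be charged directly to $\sum_j(\sqrt{\nu_j}-1)$. The way to close the argument is to observe that a block with $\nu_j$ near $1$ necessarily has $L_j$ comparable to $\mu_j$, and that $\sum_j\sqrt{\mu_j/\mu}\le\sqrt m\le m/\log m$ by Cauchy--Schwarz; this reconciles the two forms up to exactly the logarithmic factor already acknowledged elsewhere in the paper (the same $\log m$ gap that separates this lower bound from the $O\big((m+\frac1{\sqrt{\sum_i\mu_i}}\sum_i\sqrt{L_i})\log\frac1\epsilon\big)$ upper bound of the generalized SSNM).
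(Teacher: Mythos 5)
Your derivation of the first displayed bound is exactly the paper's (implicit) argument: the two per-block inequalities $K_j\ge\frac14(\sqrt{\nu_j}-1)\,T$ and $K_j\ge\frac{T}{2\log(9m)}$ are combined via $\max\{a,b\}\ge\frac12(a+b)$ and summed over $j$; that part is correct and adds the summation step the paper leaves unstated.

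The gap is in the second half, where you try to prove the ``$=$'' between the two $\Omega$-expressions (a step the paper simply asserts). Your repair of the intermediate regime rests on the claim that a block with $\nu_j$ near $1$ necessarily has $L_j$ comparable to $\mu_j$, and that is false: $\nu_j-1=(L_j-\mu_j)/\mu$ measures $L_j-\mu_j$ against the \emph{aggregate} $\mu=\sum_i\mu_i$, not against $\mu_j$. Take all $m$ blocks with $\mu_j=\mu/m$ and $L_j=\mu/\log m+\mu/m$ (Assumption~\ref{assumption-cond} holds since $L_j-\mu_j=\mu/\log m>\mu/m$). Then $\nu_j=1+1/\log m$, so
\[
\sum_{j=1}^m(\sqrt{\nu_j}-1)+\frac{m}{\log m}=\Theta\!\left(\frac{m}{\log m}\right),
\qquad
\frac{\sum_{j=1}^m\sqrt{L_j}}{\sqrt{\mu}}+\frac{m}{\log m}=\Theta\!\left(\frac{m}{\sqrt{\log m}}\right),
\]
and the two forms differ by a factor $\sqrt{\log m}$. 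These blocks are ``intermediate'' in your partition ($2\mu_j\le L_j<4\mu$ with $L_j\ll\mu$), your lower estimate $\sqrt{\nu_j}-1\ge L_j/(8\mu)=\Theta(1/\log m)$ is genuinely smaller than $\sqrt{L_j/\mu}=\Theta(1/\sqrt{\log m})$, and the shortfall, summed over all $m$ blocks, is $\Theta(m/\sqrt{\log m})$, which cannot be charged to the $m/\log m$ term. So the asserted $\Theta$-equivalence fails; one can show (via $t/2+1/\log m\ge\sqrt{2t/\log m}$, so $\sqrt{t}\le\sqrt{\log m/2}\,\bigl(t/2+1/\log m\bigr)$ for $t=\nu_j-1$) that the discrepancy is at most an extra $O(\sqrt{\log m})$ factor, but no more than that is true. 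The honest conclusion is that the construction proves the first displayed form, and the second form follows only up to an additional $\sqrt{\log m}$ factor --- a caveat consistent with the paper's ``up to a logarithmic factor'' remarks, but not with a literal reading of the ``$=$'' in the theorem, and not recoverable by the charging scheme you propose.
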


\begin{remark}
The lower bound $\Omega((m+\sqrt{m \bar L/\bar \mu})\log(1/\epsilon))$ given by \cite{lan2018optimal} is a special case of the above result when $L_1=L_2=\cdots=L_m=\bar L$,  $\mu_1=\mu_2=\cdots=\mu_m = \bar \mu$, and $\bar L/\bar \mu =1+ \Omega (1)$, up to a logarithmic factor.
 % \xout{, and $\bar L/\bar \mu = \Omega (m)$}}. %(Here $\mu$ is different from what we defined in this paper.)
\end{remark}

\section{An Upper Bound on the Gradient Complexity} \label{UB}  %Algorithm Iteration Complexity\label{sec3}}
SSNM \cite{zhou2019direct} is an accelerated variant of SAGA to solve the finite sum problem. It applies the so-called negative momentum to SAGA, which then matches the lower bound on the gradient oracle in the case all $L_i$'s are the same. In this section, we aim to generalize SSNM to the setting where the Lipschitz constants $L_i$'s vary, and we shall show that the lower bound meets the upper bound in this case as well.

Recall the problem
\[
F(x)=\sum_{i=1}^m g_i(x)=\sum_{i=1}^m \hat{g}_i(x)+h(x),
\]
where we let $\hat{g}_i(x) := g_i(x)-\frac{\mu_i}{2}\|x\|^2$, $h(x) := \frac{\mu}{2}\|x\|^2$, where $\mu=\sum_{i=1}^m \mu_i$.
We use the following estimation of the gradient of the first term,
\[
\widetilde{\nabla}^{k}=\frac{1}{\pi_{i_{k}}}\left(\nabla \hat{g}_{i_k}(y_{i_k}^k)-\nabla \hat{g}_{i_k}(\phi_{i_k}^k)\right)+  \sum_{i=1}^m\nabla \hat{g}_{i}(\phi_{i}^k){\color{red},}
\]
where $i_k$ is sampled from $\{1,2,\cdots,m\}$ with the probability
\[
\pi_i=\frac{\sqrt{L_i}}{2\sum_{j=1}^m\sqrt{L_j}}+\frac{1}{2m},\,\, i=1,2,...,m.
\]

Observe the gradient estimator $\widetilde{\nabla}^{k}$ is biased with expectation $\sum_{i=1}^m\nabla \hat{g}_{i}(y_{i}^k)$, if we extend the definition to all $y_{i}^{k}:=\tau_{i} x^{k}+(1-\tau_{i}) \phi_{i}^{k}$.

The idea of setting the probability to randomly pick a function is inspired by \cite{lan2018optimal}. Our adversarial example shows that the number of $g_i(x)$ samples should be approximately proportional to $\sqrt{L_i}$ to get an $\epsilon$-solution. That explains the first part of the probability; the second part guarantees that each function will have at least $\frac{1}{2m}$ probability to be chosen, which is important in keeping its negative momentum.
%In expectation, to achieve the same number of samples from each $g_i(x)$, adding the second part at most doubles  the iteration numbers.
%So it will not influence the iteration complexity.
In this way, we generalize the SSNM algorithm by changing the gradient estimator and adjusting the negative momentum correspondingly.

\begin{algorithm}[h]
\caption{Generalized SSNM }
\label{Algorithm1}
\begin{algorithmic}
\STATE{$\phi_1=\phi_2=\cdots=\phi_m=x^1$}
\STATE{$\pi_i=\frac{\sqrt{L_i}}{2\sum_{j=1}^m\sqrt{L_j}}+\frac{1}{2m}$}
\STATE{$\tau_i=\lambda/\pi_i$}
\FOR{$k=1,2,..., K$}
\STATE{Sample $i_k$ from $\{1,2,\cdots,m\}$ with probability $\{\pi_i\}_{i=1}^{m}$}
\STATE{$y_{i_{k}}^{k}=\tau_{i_{k}} x^{k}+(1-\tau_{i_{k}}) \phi_{i_{k}}^{k}$}
\STATE{$\widetilde{\nabla}^{k}=\frac{1}{\pi_{i_{k}}}\left(\nabla \hat{g}_{i_k}(y_{i_k}^k)-\nabla \hat{g}_{i_k}(\phi_{i_k}^k)\right)+  \sum_{i=1}^m\nabla \hat{g}_{i}(\phi_{i}^k)$}
\STATE{Perform a proximal/projection step\\ $x^{k+1}=\arg \min _{x\in \Gamma}\left\{h(x)+\left\langle\widetilde{\nabla}^{k}, x\right\rangle+\frac{1}{2 \eta}\left\|x^{k}-x\right\|^{2}\right\}$}
\STATE{Sample $j_k$ from $\{1,2,\cdots,m\}$ with probability $\{\pi_i\}_{i=1}^{m}$}
\STATE{$\phi_{j_{k}}^{k+1}=\tau_{j_{k}} x^{k+1}+(1-\tau_{j_{k}}) \phi_{j_{k}}^{k}$}
\ENDFOR
\STATE \textbf{return} $x^{k+1}$ 
\end{algorithmic}
\end{algorithm}

%{\color{red} (We need to change $x_k$ to $x^k$ in the subsequent analysis!)}

%Let $x^{\star}$ be the solution of Problem.
\begin{theorem} \label{thmssnm}
%By carefully choosing $\eta$ and $\lambda$, we have an
Generalized SSNM (Algorithm~\ref{Algorithm1}) requires no more than
$$O\left(\left(m+\frac{\sum_{j=1}^m\sqrt{L_j}}{\sqrt{\sum_{i=1}^m \mu_i}}\right)\log(1/\epsilon)\right)$$ gradient computations to reach an $\epsilon$-solution for Model~\eqref{pro} in expectation.
\end{theorem}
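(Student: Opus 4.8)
The plan is to run the Lyapunov-function (potential) analysis that is standard for accelerated variance-reduced methods such as Katyusha and SSNM, but to thread the per-component weights $1/\pi_i$ through every estimate so that the heterogeneous constants enter only through $\bar L:=\sum_{j=1}^m\sqrt{L_j}$ and through $\mu=\sum_{i=1}^m\mu_i$. Writing $D_i(u,v):=\hat g_i(u)-\hat g_i(v)-\langle\nabla\hat g_i(v),u-v\rangle\ge0$ for the Bregman divergence of the $i$-th smooth convex part, I would work with a potential of the form
\[
\mathcal{T}^k \;=\; \frac{c_1}{\eta}\bigl(F(x^k)-F(x^\star)\bigr)\;+\;\bigl\|x^k-x^\star\bigr\|^2\;+\;c_2\sum_{i=1}^m\frac{1}{\pi_i}\,D_i\bigl(\phi_i^k,x^\star\bigr),
\]
with constants $c_1,c_2>0$, $\eta$ the step size and $\lambda$ the momentum scale of Algorithm~\ref{Algorithm1} to be fixed later, and try to establish a one-step contraction $\mathbb{E}\bigl[\mathcal{T}^{k+1}\mid\mathcal{F}_k\bigr]\le(1-\theta)\,\mathcal{T}^k$ with $\theta=\Theta\bigl((m+\bar L/\sqrt{\mu})^{-1}\bigr)$. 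Granting this, iterating gives $\mathbb{E}[\mathcal{T}^K]\le(1-\theta)^K\mathcal{T}^0$; since $\mathcal{T}^k$ already dominates $\tfrac{c_1}{\eta}(F(x^k)-F(x^\star))$ (and $\mathcal{T}^0$ is bounded in the problem data via $\phi_i^1=x^1$ and $\mu$-strong convexity of $F$), we get $\mathbb{E}[F(x^K)-F(x^\star)]\le\epsilon$ once $K=O\bigl((m+\bar L/\sqrt{\mu})\log(1/\epsilon)\bigr)$. Finally, because one iteration of Algorithm~\ref{Algorithm1} evaluates $\nabla\hat g_{i_k}$ at only two points while the running sum $\sum_i\nabla\hat g_i(\phi_i^k)$ is maintained incrementally, SAGA-style, after the one-time $O(m)$ initialization, the gradient-computation count matches the iteration count up to the stated order, which gives the theorem.

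The one-step estimate has three ingredients that I would assemble. First, the proximal subproblem minimizes the $(\mu+1/\eta)$-strongly convex map $h(\cdot)+\langle\widetilde\nabla^k,\cdot\rangle+\tfrac1{2\eta}\|x^k-\cdot\|^2$ (strong convexity supplied by $h(x)=\tfrac\mu2\|x\|^2$), so for every $x\in\Gamma$,
\[
\langle\widetilde\nabla^k,x^{k+1}-x\rangle+h(x^{k+1})-h(x)\;\le\;\tfrac1{2\eta}\|x^k-x\|^2-\tfrac1{2\eta}\|x^k-x^{k+1}\|^2-\tfrac{1+\eta\mu}{2\eta}\|x^{k+1}-x\|^2 .
\]
Setting $x=x^\star$, taking $\mathbb{E}[\cdot\mid\mathcal{F}_k]$ and using $\mathbb{E}[\widetilde\nabla^k\mid\mathcal{F}_k]=\sum_i\nabla\hat g_i(y_i^k)$, I would combine this with the per-component descent inequality $\hat g_i(x^{k+1})\le\hat g_i(y_i^k)+\langle\nabla\hat g_i(y_i^k),x^{k+1}-y_i^k\rangle+\tfrac{L_i}{2}\|x^{k+1}-y_i^k\|^2$ to bound $F(x^{k+1})-F(x^\star)$; the stochastic discrepancy that remains is the variance of $\widetilde\nabla^k$. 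Second, convexity of each $\hat g_i$ together with the convex-combination identity $y_i^k-x^\star=\tau_i(x^k-x^\star)+(1-\tau_i)(\phi_i^k-x^\star)$ converts the linear term $\langle\sum_i\nabla\hat g_i(y_i^k),x^k-x^\star\rangle$ into a blend of function gaps $F(\cdot)-F(x^\star)$ and table divergences $D_i(\phi_i^k,x^\star)$. Third, the variance is controlled by
\[
\mathbb{E}\bigl[\,\bigl\|\widetilde\nabla^k-\mathbb{E}\widetilde\nabla^k\bigr\|^2\,\bigm|\,\mathcal{F}_k\,\bigr]\;\le\;\sum_{i=1}^m\frac1{\pi_i}\bigl\|\nabla\hat g_i(y_i^k)-\nabla\hat g_i(\phi_i^k)\bigr\|^2,
\]
which co-coercivity of $\hat g_i$ bounds by $L_i$-multiples of Bregman divergences; the point of the first piece $\sqrt{L_i}/(2\bar L)$ of $\pi_i$ is exactly that $\pi_i\ge\sqrt{L_i}/(2\bar L)$ forces $\sum_i L_i/\pi_i\le 2\bar L\sum_i\sqrt{L_i}=2\bar L^2$ — this is precisely how $\bar L=\sum_j\sqrt{L_j}$, rather than $\sum_j L_j$, enters the final rate — while the point of the second piece $\tfrac1{2m}$ is that $\pi_i\ge\tfrac1{2m}$ makes $\tau_i=\lambda/\pi_i\le2m\lambda\le1$ as soon as $\lambda\le\tfrac1{2m}$, the only feasibility constraint that the negative-momentum step $\phi_{j_k}^{k+1}=\tau_{j_k}x^{k+1}+(1-\tau_{j_k})\phi_{j_k}^k$ imposes.

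The decisive step — and the one I expect to be the main obstacle — is reconciling the bias of $\widetilde\nabla^k$ (its conditional mean is $\sum_i\nabla\hat g_i(y_i^k)$, not $\sum_i\nabla\hat g_i(x^k)$) with the momentum-driven update of the $\phi$-table. Taking expectation over the independent draw $j_k$, each table entry obeys $\mathbb{E}[D_i(\phi_i^{k+1},x^\star)\mid\mathcal{F}_k]=(1-\pi_i)D_i(\phi_i^k,x^\star)+\pi_i\,D_i\bigl(\tau_i x^{k+1}+(1-\tau_i)\phi_i^k,\;x^\star\bigr)$, and the prescription $\tau_i=\lambda/\pi_i$ is exactly what makes the $\pi_i$ here cancel the $1/\pi_i$ weight in $\mathcal{T}^k$, leaving cross-terms — coupling $x^{k+1}-x^\star$, the $\phi_i^k-x^\star$, the quadratic residuals $\|x^{k+1}-x^k\|^2$ and $\|x^k-\phi_i^k\|^2$, and the variance bound — that must be made to telescope after the convexity and smoothness inequalities for $\hat g_i$ and $h$ are applied; concretely, one has to verify term by term that the displacement the negative momentum induces in the $\phi$-table absorbs the bias direction $\sum_i\nabla\hat g_i(y_i^k)-\sum_i\nabla\hat g_i(x^k)$, and that every leftover second-order term fits inside the slack supplied by the prox inequality and by the factor-$(1-\pi_i)$ contraction of the table. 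Once everything has been absorbed into the three summands of $\mathcal{T}^k$, I would fix $c_1,c_2$, set $\lambda\asymp(m+\bar L/\sqrt{\mu})^{-1}$ and $\eta$ of the matching order (so that $\lambda\le\tfrac1{2m}$ and the induced per-step factor is $1-\theta$ with $\theta=\Theta(\lambda)$), and conclude by unrolling the recursion and converting $\mathbb{E}[\mathcal{T}^K]\le(1-\theta)^K\mathcal{T}^0$ into the claimed $\epsilon$-accuracy bound on $\mathbb{E}[F(x^K)-F(x^\star)]$. Getting this parameter balancing to actually close — so that the \emph{accelerated} factor $\Theta((m+\bar L/\sqrt{\mu})^{-1})$ emerges rather than a weaker, unaccelerated one — is where the negative-momentum mechanism of SSNM, adapted to the non-uniform $\pi_i$ and $\tau_i$, earns the square-root speedup, and it is the technical heart of the argument.
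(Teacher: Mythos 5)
Your plan follows the same route as the paper's proof: the same three ingredients (the variance bound $\mathbb{E}\|\widetilde\nabla^k-\mathbb{E}\widetilde\nabla^k\|^2\le\sum_i\pi_i^{-1}\|\nabla\hat g_i(y_i^k)-\nabla\hat g_i(\phi_i^k)\|^2$ combined with co-coercivity, the strongly-convex prox inequality, and the smoothness/convexity estimates threaded through the non-uniform $\pi_i$), the same observation that $\pi_i\ge\sqrt{L_i}/(2\sum_j\sqrt{L_j})$ is what turns $\sum_iL_i/\pi_i$ into $2(\sum_j\sqrt{L_j})^2$ while $\pi_i\ge 1/(2m)$ keeps $\tau_i=\lambda/\pi_i\le 1$, and the same final parameter scale $\lambda\asymp(m+\sum_j\sqrt{L_j}/\sqrt{\mu})^{-1}$ with contraction factor $1-\Theta(\lambda)$. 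All of these preliminary estimates are correct and coincide with the paper's Lemmas~\ref{lemma1} and~\ref{lemma-3} and with its Case~I/Case~II parameter choices.

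The gap is exactly where you say you expect it: the one-step contraction $\mathbb{E}[\mathcal{T}^{k+1}\mid\mathcal{F}_k]\le(1-\theta)\mathcal{T}^k$ is asserted, not derived, and the description of the cancellation ("must be made to telescope", "one has to verify term by term") is a statement of the goal rather than a proof. This is the technical heart of the argument and it is not automatic; the paper closes it only after (i) choosing a specific potential, namely $\frac{1}{\lambda}D^k+\frac{1+\eta\mu}{2\eta}\|x^k-x^\star\|^2$ with $D^k=\sum_iF_i(\phi_i^k)-F(x^\star)-\sum_i\langle\nabla F_i(x^\star),\phi_i^k-x^\star\rangle$ and $F_i:=\hat g_i+\frac{1}{m}h$ --- the table entries must be weighted by $\frac{1}{\pi_i\tau_i}=\frac{1}{\lambda}$ (not by $\frac{1}{\pi_i}$ as in your $\mathcal{T}^k$) so that the expectation over $j_k$ produces the coefficient $\frac{1}{\tau_i}$ that matches the smoothness step, the $\frac{1}{m}h$ part must be folded into the table to absorb the $-\mathbb{E}[h(x^{k+1})]$ term from the prox lemma, and the linearization at $x^\star$ plays the role of your Bregman divergences in keeping the table term nonnegative; and (ii) reducing all the leftover second-order terms to the single verifiable condition $\frac{1}{\eta}-\sum_{i}\tau_iL_i\ge\frac{L_i\tau_i}{\pi_i(1-\tau_i)}$ for all $i$, which is then checked for both parameter regimes. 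Your potential also carries an extra $\frac{c_1}{\eta}(F(x^k)-F(x^\star))$ term that the paper's recursion does not support (the function-value information lives entirely in the $\phi$-table, not at $x^k$), so as written the recursion you aim for would not close without reworking the bookkeeping. In short: right architecture, correct supporting estimates, but the decisive inequality --- the one the theorem actually rests on --- is missing.
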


%The above theorem has been shown to match the lower bound for the finite sum problem.

\begin{remark}
{\rm Comparing the upper bound in Theorem~\ref{thmssnm} with the lower bound in Theorem~\ref{lower-iter-bound}, we observe the upper and lower bounds agree, up to a logarithmic factor,
%each other when $\frac{\sum_{j=1}^m\sqrt{L_j}}{\sqrt{\sum_{i=1}^m \mu_i}} \ge \Omega (m)$.
showing that Algorithm~\ref{Algorithm1} is essentially optimal for the finite-sum optimization model~\eqref{pro}.}
\end{remark}

\begin{remark}
{\rm The function $h(x)$ can be generalized to be a lower semi-continuous (possibly non-differentiable) function, as long as the proximal operator is available.}
%So it can solve the problem with constraints as long as the proximal mapping of $h(x)$ can be easily calculated.
\end{remark}

%\begin{remark}
%By using Markov's inequality, we can achieve $\epsilon$ optimal solution in $O\left(\left(m+\frac{\sum_{j=1}^m\sqrt{L_j}}{\sqrt{\sum_{i=1}^m \mu_i}}\right) \log(1/\epsilon)\right)$ with high probability.
%\end{remark}

In the remainder of this section, we shall prove Theorem~\ref{thmssnm}, following similar steps as in SSNM~\cite{zhou2019direct} and Katyusha~\cite{allen2017katyusha} with necessary adaptations. To start, we note two lemmas. The first one is generalized from Lemma~2.4 in \cite{allen2017katyusha}.
\begin{lemma}{(variance upper bound)}\label{lemma1}
\[
\mathbb{E}_{i_{k}}\left[\left\|\widetilde{\nabla}^{k}- \sum_{i=1}^{m} \nabla \hat{g}_{i}\left(y_{i}^{k}\right)\right\|^{2}\right]
\leq \sum_{i=1}^{m} \frac{2L_{i}}{\pi_{i}} \left(\hat{g}_{i}\left(\phi_{i}^{k}\right)-\hat{g}_{i}\left(y_{i}^{k}\right) -\left\langle\nabla \hat{g}_{i}\left(y_{i}^{k}\right),\phi_{i}^{k}-y_{i}^{k}\right\rangle  \right).
\]
\end{lemma}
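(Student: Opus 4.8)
The plan is to follow the standard SVRG/Katyusha-type variance decomposition, with the only new bookkeeping coming from the non-uniform sampling weights $\pi_i$. First I would isolate the stochastic part of the estimator, namely $X^{k}:=\frac{1}{\pi_{i_k}}\left(\nabla\hat g_{i_k}(y_{i_k}^{k})-\nabla\hat g_{i_k}(\phi_{i_k}^{k})\right)$, so that $\widetilde\nabla^{k}=X^{k}+\sum_{i=1}^{m}\nabla\hat g_{i}(\phi_{i}^{k})$ where the second sum is deterministic given the current iterates. Taking the conditional expectation over $i_k$ (which equals $i$ with probability $\pi_i$), the $1/\pi_{i_k}$ factor cancels one power of $\pi$ and gives $\mathbb{E}_{i_k}[X^{k}]=\sum_{i=1}^{m}\left(\nabla\hat g_{i}(y_{i}^{k})-\nabla\hat g_{i}(\phi_{i}^{k})\right)$, hence $\mathbb{E}_{i_k}[\widetilde\nabla^{k}]=\sum_{i=1}^{m}\nabla\hat g_{i}(y_{i}^{k})$; consequently the quantity inside the norm on the left-hand side is exactly $X^{k}-\mathbb{E}_{i_k}[X^{k}]$.

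Next I would apply the elementary identity $\mathbb{E}\,\|X^{k}-\mathbb{E}X^{k}\|^{2}=\mathbb{E}\,\|X^{k}\|^{2}-\|\mathbb{E}X^{k}\|^{2}\le\mathbb{E}\,\|X^{k}\|^{2}$, which reduces the claim to bounding the raw second moment. Again using that $i_k=i$ with probability $\pi_i$ and that now the factor is $1/\pi_{i_k}^{2}$, one gets
\[
\mathbb{E}_{i_k}\,\|X^{k}\|^{2}=\sum_{i=1}^{m}\frac{1}{\pi_{i}}\left\|\nabla\hat g_{i}(y_{i}^{k})-\nabla\hat g_{i}(\phi_{i}^{k})\right\|^{2}.
\]

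The only analytic ingredient is then the standard inequality for a convex $L$-smooth function $f$, namely $\|\nabla f(x)-\nabla f(y)\|^{2}\le 2L\left(f(x)-f(y)-\langle\nabla f(y),x-y\rangle\right)$, which I would invoke with $f=\hat g_{i}$, $x=\phi_{i}^{k}$, $y=y_{i}^{k}$; substituting term by term into the sum above yields precisely the stated bound. Here one should note that $\hat g_{i}(x)=g_{i}(x)-\frac{\mu_i}{2}\|x\|^{2}$ is convex and $L_i$-smooth (in fact $(L_i-\mu_i)$-smooth, and $L_i$ is a valid upper bound) under Assumption~1, so the quoted inequality applies, and each bracketed term is nonnegative, consistent with it upper-bounding a variance. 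There is no genuine obstacle in this argument — the steps are routine — but the point that takes a moment of care is the very first one: verifying that the centering term $\sum_i\nabla\hat g_i(y_i^k)$ is exactly $\mathbb{E}_{i_k}[\widetilde\nabla^k]$, since the estimator is biased relative to $\sum_i\nabla g_i(\cdot)$ and the bias is precisely what makes this clean cancellation possible. The computation mirrors Lemma~2.4 of~\cite{allen2017katyusha}, the sole difference being the $\pi_i$-weighting introduced by importance sampling.
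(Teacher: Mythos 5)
Your proposal is correct and follows essentially the same route as the paper's proof: decompose the estimator so the left-hand side is a centered random variable, apply $\mathbb{E}\|\zeta-\mathbb{E}\zeta\|^{2}\le\mathbb{E}\|\zeta\|^{2}$, and then bound the raw second moment term-by-term via the standard inequality $\|\nabla \hat g_i(x)-\nabla \hat g_i(y)\|^{2}\le 2L_i(\hat g_i(x)-\hat g_i(y)-\langle\nabla\hat g_i(y),x-y\rangle)$ for convex $L_i$-smooth functions, with the $1/\pi_{i_k}^{2}$ factor reducing to $1/\pi_i$ under the expectation. Your added remark that $\hat g_i$ is in fact $(L_i-\mu_i)$-smooth (so $L_i$ is a valid, if slightly loose, constant) is a correct observation not made explicit in the paper but does not change the argument.
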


\begin{proof}
Since each $\hat{g}_i(x)$ is convex and $L_i$ smooth, Theorem 2.1.5 in \cite{nesterov2018lectures} stipulates
\begin{equation}
\left\|\nabla \hat{g}_{i}\left(y_{i}^{k}\right)-\nabla \hat{g}_{i}\left(\phi_{i}^{k}\right)\right\|^{2}\leq 2L_{i}\left(\hat{g}_{i}\left(\phi_{i}^{k}\right)-\hat{g}_{i}\left(y_{i}^{k}\right) -\left\langle\nabla \hat{g}_{i}\left(y_{i}^{k}\right),\phi_{i}^{k}-y_{i}^{k}\right\rangle  \right). \label{squineq}
\end{equation}
Observing
\begin{equation} \label{ex-ineq}
\mathbb{E}\|\zeta-\mathbb{E} \zeta\|^{2}=\mathbb{E}\|\zeta\|^{2}-\|\mathbb{E} \zeta\|^{2},
\end{equation}
%and by using the above inequality, we have
we have
\begin{eqnarray*}
    & & \mathbb{E}_{i_{k}}\left[\left\|\widetilde{\nabla}^{k}- \sum_{i=1}^{m} \nabla \hat{g}_{i}\left(y_{i}^{k}\right)\right\|^{2}\right] \\
    &=& \mathbb{E}_{i_{k}}\left[\left\|\frac{1}{\pi_{i_{k}}}\left(\nabla \hat{g}_{i_{k}}\left(y_{i_{k}}^{k}\right)-\nabla \hat{g}_{i_{k}}\left(\phi_{i_{k}}^{k}\right)\right)- \sum_{i=1}^{m}\left(\nabla \hat{g}_{i}\left(y_{i}^{k}\right)-\nabla \hat{g}_{i}\left(\phi_{i}^{k}\right)\right) \right\|^{2}\right] \\
    & \overset{\eqref{ex-ineq}}{\leq} & \mathbb{E}_{i_{k}}\left[\left\|\frac{1}{\pi_{i_{k}}}\left(\nabla \hat{g}_{i_{k}}\left(y_{i_{k}}^{k}\right)-\nabla \hat{g}_{i_{k}}\left(\phi_{i_{k}}^{k}\right)\right) \right\|^{2}\right] \\ %\label{ineq1}\\
    & \overset{\eqref{squineq}}{\leq} & \mathbb{E}_{i_{k}}\left[\frac{2L_{i_{k}}}{ \pi_{i_{k}}^2} \left(\hat{g}_{i_{k}}\left(\phi_{i_{k}}^{k}\right)-\hat{g}_{i_{k}}\left(y_{i_{k}}^{k}\right) -\left\langle\nabla \hat{g}_{i_{k}}\left(y_{i_{k}}^{k}\right),\phi_{i_{k}}^{k}-y_{i_{k}}^{k}\right\rangle  \right)\right] \\ %\label{ineq2}\\
    & = & \sum_{i=1}^{m} \frac{2L_{i}}{\pi_{i}} \left(\hat{g}_{i}\left(\phi_{i}^{k}\right)-\hat{g}_{i}\left(y_{i}^{k}\right) -\left\langle\nabla \hat{g}_{i}\left(y_{i}^{k}\right),\phi_{i}^{k}-y_{i}^{k}\right\rangle \right).
\end{eqnarray*}
%where \eqref{ineq1} follows from $\mathbb{E}\|\zeta-\mathbb{E} \zeta\|^{2}=\mathbb{E}\|\zeta\|^{2}-\|\mathbb{E} \zeta\|^{2}$ and \eqref{ineq2} uses \eqref{squineq}.
\end{proof}

The following lemma is identical to Lemma 3.5 in \cite{allen2017katyusha}.
\begin{lemma} \label{lemma-3}
Suppose $h(x)$ is $\mu$-strongly convex, and $x^{k+1}$ satisfies
\[
x^{k+1} := \arg \min _{x\in \Gamma}\left\{h(x)+\left\langle\widetilde{\nabla}^{k}, x\right\rangle+\frac{1}{2 \eta}\left\|x^{k}-x\right\|^{2}\right\}.
\]
Then, for all $u\in \Gamma$, it holds that
\begin{eqnarray*}
& & \left\langle\widetilde{\nabla}^{k}, x^{k+1}-u\right\rangle \\
& \leq & -\frac{1}{2 \eta}\left\|x^{k+1}-x^{k}\right\|^{2}+\frac{1}{2 \eta}\left\|x^{k}-u\right\|^{2}-\frac{1+\eta \mu}{2 \eta}\left\|x^{k+1}-u\right\|^{2}+h(u)-h\left(x^{k+1}\right).
\end{eqnarray*}
\end{lemma}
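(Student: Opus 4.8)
The plan is to recognize the subproblem defining $x^{k+1}$ as the minimization of a strongly convex function over the convex set $\Gamma$, and then invoke the elementary fact that the minimizer of such a function obeys the strong-convexity inequality, with the appropriate modulus, anchored at the minimizer itself.

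First I would introduce $\psi(x):=h(x)+\langle\widetilde{\nabla}^{k},x\rangle+\frac{1}{2\eta}\|x^{k}-x\|^{2}$. Since $h$ is $\mu$-strongly convex, $\langle\widetilde{\nabla}^{k},\cdot\rangle$ is affine, and $\frac{1}{2\eta}\|x^{k}-\cdot\|^{2}$ is $\tfrac{1}{\eta}$-strongly convex, the function $\psi$ is $\bigl(\mu+\tfrac{1}{\eta}\bigr)$-strongly convex on $\Gamma$, and by definition $x^{k+1}=\arg\min_{x\in\Gamma}\psi(x)$. Next I would apply the standard consequence of optimality: if $f$ is $\sigma$-strongly convex on a convex set $\Gamma$ and $x^{\star}=\arg\min_{x\in\Gamma}f(x)$, then $f(u)\geq f(x^{\star})+\frac{\sigma}{2}\|u-x^{\star}\|^{2}$ for every $u\in\Gamma$; this follows from the first-order optimality condition $\langle g,u-x^{\star}\rangle\geq 0$ for some subgradient $g\in\partial f(x^{\star})$ together with the subgradient form of strong convexity. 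Taking $f=\psi$, $\sigma=\mu+\tfrac{1}{\eta}$, $x^{\star}=x^{k+1}$ yields, for all $u\in\Gamma$,
\[
h(u)+\langle\widetilde{\nabla}^{k},u\rangle+\frac{1}{2\eta}\|x^{k}-u\|^{2}\;\geq\; h(x^{k+1})+\langle\widetilde{\nabla}^{k},x^{k+1}\rangle+\frac{1}{2\eta}\|x^{k}-x^{k+1}\|^{2}+\frac{1+\eta\mu}{2\eta}\|u-x^{k+1}\|^{2}.
\]

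Finally I would just rearrange: move the inner-product term in $u$ to the right-hand side and the one in $x^{k+1}$ to the left-hand side, and collect the $h$-terms, which isolates $\langle\widetilde{\nabla}^{k},x^{k+1}-u\rangle$ on the left and reproduces the claimed bound verbatim (note $\frac{1}{2}\bigl(\mu+\tfrac{1}{\eta}\bigr)=\frac{1+\eta\mu}{2\eta}$). There is essentially no hard step here — the computation is a one-line rearrangement — and the only point that requires a word of care is the use of the variational/subgradient optimality condition for the constrained minimizer when $h$, hence $\psi$, is not differentiable; since $\psi$ is a sum of a convex function and smooth terms over a convex set, this is exactly the classical optimality characterization and causes no trouble. (This is the same argument as Lemma~3.5 in \cite{allen2017katyusha}.)
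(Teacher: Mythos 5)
Your proof is correct and is essentially the same argument as the paper's, just packaged differently: the paper applies the first-order optimality condition at $x^{k+1}$ and then handles the quadratic term via the three-point identity and $h$ via its strong-convexity subgradient inequality, which together are exactly the content of your ``quadratic growth at the minimizer of the $(\mu+\tfrac{1}{\eta})$-strongly convex objective'' step. No gap.
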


\begin{proof}
By the optimality condition, there exists an $h'\in \partial h(x)|_{x=x^{k+1}}$, such that
\[
\left\langle\frac{1}{\eta}(x^{k+1}-x^{k})+\widetilde{\nabla}^{k}+h',u-x^{k+1} \right\rangle\geq 0,\,\,\, \forall u \in \Gamma.
\]
Observe the identity $\left\langle x^{k+1}-x^{k},x^{k+1}-u \right\rangle=\frac 1 2\|x^{k+1}-x^{k}\|^2+\frac 1 2\|x^{k+1}-u\|^2-\frac 1 2\|x^{k}-u\|^2$ and the inequality $\left\langle h',u-x^{k+1} \right\rangle\leq h(u)-h\left(x^{k+1}\right)-\frac{\mu}{2}\|x^{k+1}-u\|^2$, the lemma follows.
\end{proof}

Now, we shall proceed to proving the theorem.

\begin{proof}[Proof of Theorem \ref{thmssnm}]
First, we assume that we can choose $\eta>0$ and $\lambda>0$ such that $0\leq \tau_{i} \leq 1$, $\frac{1}{\eta}-\frac{\sum_{i=1}^{m}\tau_i L_i}{m} \geq \frac{L_i\tau_i}{m \pi_i(1-\tau_i)}$, $\forall i=1,2,...,m$ hold. Later in the proof, we will show how to choose $\eta>0$ and $\lambda>0$ to satisfy these inequalities.

By the convexity of $\hat{g}_{i_{k}}(\cdot)$,
\begin{eqnarray*}
 & &    \frac{1}{\pi_{i_{k}}}\left(\hat{g}_{i_{k}}\left(y_{i_{k}}^{k}\right)-\hat{g}_{i_{k}}\left(x^{\star}\right)\right)
    \leq \frac{1}{\pi_{i_{k}}}\left\langle\nabla \hat{g}_{i_{k}}\left(y_{i_{k}}^{k}\right), y_{i_{k}}^{k}-x^{\star}\right\rangle \\
    &=& \frac{1}{\pi_{i_{k}}}\frac{1-\tau_{i_{k}}}{\tau_{i_{k}}}\left\langle\nabla \hat{g}_{i_{k}}\left(y_{i_{k}}^{k}\right), \phi_{i_{k}}^{k}-y_{i_{k}}^{k}\right\rangle+\left\langle\frac{1}{ \pi_{i_{k}}}\nabla \hat{g}_{i_{k}}\left(y_{i_{k}}^{k}\right)-\widetilde{\nabla}^{k}, x^{k}-x^{\star}\right\rangle\\
    & &+\left\langle\widetilde{\nabla}^{k}, x^{k}-x^{k+1}\right\rangle+\left\langle\widetilde{\nabla}^{k}, x^{k+1}-x^{\star}\right\rangle
\end{eqnarray*}
where the last step uses the definition that $y_{i}=\tau_{i} x^{k}+(1-\tau_{i}) \phi_{i}^{k}$, for $i=1,2,...,m$.
%$y_{i_{k}}^{k}=\tau_{i_{k}} x^{k}+(1-\tau_{i_{k}}) \phi_{i_{k}}^{k}$. {\color{red} Replace to $y_{i}=\tau_{i} x^{k}+(1-\tau_{i}) \phi_{i}^{k}, \forall i=1,2,...,m$.}

Observe
\[
\mathbb{E}_{i_{k}}\left[\widetilde{\nabla}^{k}-\frac{1}{\pi_{i_{k}}}\nabla \hat{g}_{i_{k}}\left(y_{i_{k}}^{k}\right)\right]=\mathbb{E}_{i_{k}}\left[\widetilde{\nabla}^{k}\right]-\sum_{i=1}^{m} \nabla \hat{g}_{i}\left(y_{i}^{k}\right)=0.
\]
By taking expectation with respect to sample $i_k$, it follows
\begin{eqnarray} \label{opt-gap} \\
& & \sum_{i=1}^{m} \left( \hat{g}_{i}\left(y_{i}^{k}\right)-\hat{g}_i\left(x^{\star}\right) \right)  \nonumber  \\
&\leq& \sum_{i=1}^{m}\frac{1-\tau_{i}}{\tau_{i}}\left\langle\nabla \hat{g}_{i}\left(y_{i}^{k}\right), \phi_{i}^{k}-y_{i}^{k}\right\rangle+\mathbb{E}_{i_{k}}\left[\left\langle\widetilde{\nabla}^{k}, x^{k}-x^{k+1}\right\rangle\right]+\mathbb{E}_{i_{k}}\left[\left\langle\widetilde{\nabla}^{k}, x^{k+1}-x^{\star}\right\rangle\right] . \nonumber
\end{eqnarray}

%{\color{red}By the convexity of $\hat{g}_{i}(\cdot)$,
%\begin{eqnarray*}
%    \frac{1}{\pi_{i}}\left(\hat{g}_{i}\left(y_{i}^{k}\right)-\hat{g}_{i}\left(x^{\star}\right)\right)
%    &\leq& \frac{1}{\pi_{i}}\left\langle\nabla \hat{g}_{i}\left(y_{i}^{k}\right), y_{i}^{k}-x^{\star}\right\rangle \\
%    &=& \frac{1}{\pi_{i}}\frac{1-\tau_{i}}{\tau_{i}}\left\langle\nabla \hat{g}_{i}\left(y_{i}^{k}\right), \phi_{i}^{k}-y_{i}^{k}\right\rangle+\left\langle\frac{1}{ \pi_{i}}\nabla \hat{g}_{i}\left(y_{i}^{k}\right)-\widetilde{\nabla}^{k}, x^{k}-x^{\star}\right\rangle\\
%    & &+\left\langle\widetilde{\nabla}^{k}, x^{k}-x^{k+1}\right\rangle+\left\langle\widetilde{\nabla}^{k}, x^{k+1}-x^{\star}\right\rangle
%\end{eqnarray*}
%where the last step uses the definition that $y_{i}^{k}=\tau_{i} x^{k}+(1-\tau_{i}) \phi_{i}^{k}$.

%Observe
%\[
%\mathbb{E}_{i_{k}}\left[\widetilde{\nabla}^{k}\right]=\sum_{i=1}^{m} \nabla \hat{g}_{i}\left(y_{i}^{k}\right).
%\]
%By taking expectation with respect to sample $i$ with probability $\{\pi_i\}_{i=1}^{m}$, it follows
%\begin{eqnarray} \label{opt-gap}
%& & \sum_{i=1}^{m} \left( \hat{g}_{i}\left(y_{i}^{k}\right)-\hat{g}_i\left(x^{\star}\right) \right) \nonumber \\
%&\leq& \sum_{i=1}^{m}\frac{1-\tau_{i}}{\tau_{i}}\left\langle\nabla \hat{g}_{i}\left(y_{i}^{k}\right), \phi_{i}^{k}-y_{i}^{k}\right\rangle+\mathbb{E}_{i_{k}}\left[\left\langle\widetilde{\nabla}^{k}, x^{k}-x^{k+1}\right\rangle\right]+\mathbb{E}_{i_{k}}\left[\left\langle\widetilde{\nabla}^{k}, x^{k+1}-x^{\star}\right\rangle\right] .
%\end{eqnarray}
%}

Theorem 2.1.5 in \cite{nesterov2018lectures} tells us that
\[
\hat{g}_{j_{k}}\left(\phi_{j_{k}}^{k+1}\right)-\hat{g}_{j_{k}}\left(y_{j_{k}}^{k}\right) \leq\left\langle\nabla \hat{g}_{j_{k}}\left(y_{j_{k}}^{k}\right), \phi_{j_{k}}^{k+1}-y_{j_{k}}^{k}\right\rangle+\frac{L_{j_{k}}}{2}\left\|\phi_{j_{k}}^{k+1}-y_{j_{k}}^{k}\right\|^{2} .
\]

Dividing the above inequality by $\pi_{j_{k}}\tau_{j_{k}}$ and taking expectation with respect to sample $j_k$, we obtain
\begin{eqnarray*}
& & \mathbb{E}_{j_{k}}\left[\frac{1}{\pi_{j_{k}}\tau_{j_{k}}}\hat{g}_{j_{k}}\left(\phi_{j_{k}}^{k+1}\right)\right]-\sum_{i=1}^{m} \frac{1}{\tau_i} \hat{g}_i\left(y_i^{k}\right) \\
&\leq& \left\langle \sum_{i=1}^{m} \nabla \hat{g}_{i}\left(y_{i}^{k}\right), x^{k+1}-x^{k}\right\rangle+\frac{\sum_{i=1}^{m}\tau_i L_i}{2}\|x^{k+1}-x^{k}\|^2,
\end{eqnarray*}
which is equivalent to
\begin{eqnarray*}
     \left\langle\widetilde{\nabla}^{k}, x^{k}-x^{k+1}\right\rangle
     &\leq&
    \sum_{i=1}^{m} \frac{1}{\tau_i} \hat{g}_{i}\left(y_{i}^{k}\right)- \mathbb{E}_{j_{k}}\left[\frac{1}{\pi_{j_{k}}\tau_{j_{k}}}\hat{g}_{j_{k}}\left(\phi_{j_{k}}^{k+1}\right)\right] \\
    & & +\left\langle \sum_{i=1}^{m} \nabla \hat{g}_{i}\left(y_{i}^{k}\right)-\widetilde{\nabla}^{k},
    x^{k+1}-x^{k}\right\rangle+\frac{\sum_{i=1}^{m}\tau_i L_i}{2}\|x^{k+1}-x^{k}\|^2.
\end{eqnarray*}
Taking expectation with respect to sample $i_k$ we get
\begin{eqnarray*}
     \mathbb{E}_{i_{k}}\left[\left\langle\widetilde{\nabla}^{k}, x^{k}-x^{k+1}\right\rangle\right] &\leq&
    \sum_{i=1}^{m} \frac{1}{\tau_i} \hat{g}_{i}\left(y_{i}^{k}\right)
    - \mathbb{E}_{i_{k},j_{k}}\left[\frac{1}{\pi_{j_{k}}\tau_{j_{k}}}\hat{g}_{j_{k}}\left(\phi_{j_{k}}^{k+1}\right)\right]\\
& & +\mathbb{E}_{i_{k}}\left[\left\langle \sum_{i=1}^{m}
\nabla \hat{g}_{i}\left(y_{i}^{k}\right)-\widetilde{\nabla}^{k}, x^{k+1}-x^{k}\right\rangle\right]+\frac{\sum_{i=1}^{m}\tau_i L_i}{2}\mathbb{E}_{i_{k}}\left[\|x^{k+1}-x^{k}\|^2\right] .
\end{eqnarray*}

By Lemma~\ref{lemma-3}, taking expectation we have
\begin{eqnarray*}
& & \mathbb{E}_{i_{k}}\left[ \left\langle\widetilde{\nabla}^{k}, x^{k+1}-x^\star\right\rangle \right]  \\
&\leq& - \mathbb{E}_{i_{k}}\left[\frac{1}{2 \eta}\left\|x^{k+1}-x^{k}\right\|^{2}\right]+\frac{1}{2 \eta}\left\|x^{k}-x^\star\right\|^{2}-\frac{1+\eta \mu}{2 \eta} \mathbb{E}_{i_{k}}\left[ \left\|x^{k+1}-x^\star\right\|^{2}\right]+h(x^\star) - \mathbb{E}_{i_{k}}\left[ h\left(x^{k+1}\right)\right] .
\end{eqnarray*}

Applying the above inequalities on \eqref{opt-gap}, we have % \eqref{eq1}, we obtain
\begin{eqnarray*}
    & & \sum_{i=1}^{m} \left( \hat{g}_{i}\left(y_{i}^{k}\right)-\hat{g}_i \left(x^{\star}\right) \right) \\
    &\leq&
     \sum_{i=1}^{m}\frac{1-\tau_{i}}{\tau_{i}}\left\langle\nabla \hat{g}_{i}\left(y_{i}^{k}\right), \phi_{i}^{k}-y_{i}^{k}\right\rangle
    +\sum_{i=1}^{m} \frac{1}{\tau_i} \hat{g}_{i}\left(y_{i}^{k}\right)
    - \mathbb{E}_{i_{k},j_{k}}\left[\frac{1}{\pi_{j_{k}}\tau_{j_{k}}}\hat{g}_{j_{k}}\left(\phi_{j_{k}}^{k+1}\right) \right] \\
   & & +\mathbb{E}_{i_{k}}\left[\left\langle \sum_{i=1}^{m} \nabla \hat{g}_{i}\left(y_{i}^{k}\right)-\widetilde{\nabla}^{k}, x^{k+1}-x^{k}\right\rangle\right]+\frac{\sum_{i=1}^{m}\tau_i L_i}{2}\mathbb{E}_{i_{k}}\left[\|x^{k+1}-x^{k}\|^2\right]\\
   & & -\frac{1}{2 \eta} \mathbb{E}_{i_{k}}\left[\left\|x^{k+1}-x^{k}\right\|^{2}\right]+\frac{1}{2 \eta}\left\|x^{k}-x^{\star}\right\|^{2}-\frac{1+\eta \mu}{2 \eta} \mathbb{E}_{i_{k}}\left[\left\|x^{k+1}-x^{\star}\right\|^{2}\right]
    +h\left(x^{\star}\right)-\mathbb{E}_{i_{k}}\left[h\left(x^{k+1}\right)\right]\\
   & \leq&  \sum_{i=1}^{m}\frac{1-\tau_{i}}{\tau_{i}}\left\langle\nabla \hat{g}_{i}\left(y_{i}^{k}\right), \phi_{i}^{k}-y_{i}^{k}\right\rangle
    +\sum_{i=1}^{m} \frac{1}{\tau_i} \hat{g}_{i}\left(y_{i}^{k}\right)
    - \mathbb{E}_{i_{k},j_{k}}\left[\frac{1}{\pi_{j_{k}}\tau_{j_{k}}}\hat{g}_{j_{k}}\left(\phi_{j_{k}}^{k+1}\right)\right]\\
   & & + \frac{1}{2\left(\frac{1}{\eta}-\sum_{i=1}^{m}\tau_i L_i \right)}\mathbb{E}_{i_{k}}\left[\left\| \sum_{i=1}^{m} \nabla \hat{g}_{i}\left(y_{i}^{k}\right)-\widetilde{\nabla}^{k}\right\|^2\right]+\frac{1}{2 \eta}\left\|x^{k}-x^{\star}\right\|^{2}\\
   & & -\frac{1+\eta \mu}{2 \eta} \mathbb{E}_{i_{k}}\left[\left\|x^{k+1}-x^{\star}\right\|^{2}\right]+h\left(x^{\star}\right)-\mathbb{E}_{i_{k}}\left[h\left(x^{k+1}\right)\right]\\
   &\leq&  \sum_{i=1}^{m}\frac{1-\tau_{i}}{\tau_{i}}\left\langle\nabla \hat{g}_{i}\left(y_{i}^{k}\right), \phi_{i}^{k}-y_{i}^{k}\right\rangle
    +\sum_{i=1}^{m} \frac{1}{\tau_i} \hat{g}_{i}\left(y_{i}^{k}\right)
    - \mathbb{E}_{i_{k},j_{k}}\left[\frac{1}{\pi_{j_{k}}\tau_{j_{k}}}\hat{g}_{j_{k}}\left(\phi_{j_{k}}^{k+1}\right)\right]\\
   & &+ \frac{1}{2\left(\frac{1}{\eta}-\sum_{i=1}^{m}\tau_i L_i \right)}\sum_{i=1}^{m} \frac{2L_{i}}{\pi_{i}} \left(\hat{g}_{i}\left(\phi_{i}^{k}\right)-\hat{g}_{i}\left(y_{i}^{k}\right) -\left\langle\nabla \hat{g}_{i}\left(y_{i}^{k}\right),\phi_{i}^{k}-y_{i}^{k}\right\rangle  \right)\\
   & &+\frac{1}{2 \eta}\left\|x^{k}-x^{\star}\right\|^{2}-\frac{1+\eta \mu}{2 \eta} \mathbb{E}_{i_{k}}\left[\left\|x^{k+1}-x^{\star}\right\|^{2}\right]+h\left(x^{\star}\right)-\mathbb{E}_{i_{k}}\left[h\left(x^{k+1}\right)\right]\\
   &\leq&  \sum_{i=1}^{m}\frac{1-\tau_{i}}{\tau_{i}}\left\langle\nabla \hat{g}_{i}\left(y_{i}^{k}\right), \phi_{i}^{k}-y_{i}^{k}\right\rangle
    +\sum_{i=1}^{m} \frac{1}{\tau_i} \hat{g}_{i}\left(y_{i}^{k}\right)
    - \mathbb{E}_{i_{k},j_{k}}\left[\frac{1}{\pi_{j_{k}}\tau_{j_{k}}}\hat{g}_{j_{k}}\left(\phi_{j_{k}}^{k+1}\right)\right]\\
  &  &+ \sum_{i=1}^{m} \frac{1-\tau_{i}}{\tau_{i}} \left(\hat{g}_{i}\left(\phi_{i}^{k}\right)-\hat{g}_{i}\left(y_{i}^{k}\right) -\left\langle\nabla \hat{g}_{i}\left(y_{i}^{k}\right),\phi_{i}^{k}-y_{i}^{k}\right\rangle\right)\\
  &  &+\frac{1}{2 \eta}\left\|x^{k}-x^{\star}\right\|^{2}-\frac{1+\eta \mu}{2 \eta} \mathbb{E}_{i_{k}}\left[\left\|x^{k+1}-x^{\star}\right\|^{2}\right]+h\left(x^{\star}\right)-\mathbb{E}_{i_{k}}\left[h\left(x^{k+1}\right)\right]
\end{eqnarray*}
where the second inequality follows from the inequality $\langle a, b\rangle \leq \frac{1}{2 \beta}\|a\|^{2}+\frac{\beta}{2}\|b\|^{2}$ and $\frac{1}{\eta}-\sum_{i=1}^{m}\tau_i L_i >0$, and the third inequality is due to Lemma \ref{lemma1}, and the last inequality holds by the assumption that $\frac{1}{\eta}-\sum_{i=1}^{m}\tau_i L_i \geq \frac{L_i\tau_i}{\pi_i(1-\tau_i)}$.

Canceling out and rearranging the terms, we have
\begin{eqnarray*}
    &     &  \mathbb{E}_{i_{k},j_{k}}\left[\frac{1}{\pi_{j_{k}}\tau_{j_{k}}} \hat{g}_{j_{k}}\left(\phi_{j_{k}}^{k+1}\right)\right]-F(x^{\star})\\
    &\leq & \sum_{i=1}^{m} \frac{1-\tau_{i}}{\tau_{i}} \hat{g}_{i}\left(\phi_{i}^{k}\right)+\frac{1}{2 \eta}\left\|x^{k}-x^{\star}\right\|^{2}-\frac{1+\eta \mu}{2 \eta} \mathbb{E}_{i_{k}}\left[\left\|x^{k+1}-x^{\star}\right\|^{2}\right]-\mathbb{E}_{i_{k}}\left[h\left(x^{k+1}\right)\right].
\end{eqnarray*}

By the convexity of $h(\cdot)$ and $\phi_{j_{k}}^{k+1}=\tau_{j_{k}} x^{k+1}+(1-\tau_{j_{k}}) \phi_{j_{k}}^{k}$, as long as $0\leq \tau_{j_{k}} \leq 1$, we have
\[
h(\phi_{j_{k}}^{k+1})\leq \tau_{j_{k}} h(x^{k+1})+(1-\tau_{j_{k}}) h(\phi_{j_{k}}^{k}) .
\]
Dividing the above inequality by $m \pi_{j_{k}} \tau_{j_{k}}$ and taking expectation with respect to sample $j_k$ and sample $i_k$, we obtain
\[
-\mathbb{E}_{i_{k}}\left[h\left(x^{k+1}\right)\right] \leq  \sum_{i=1}^{m} \frac{1-\tau_{j_{k}}}{m \tau_{j_{k}}} h\left(\phi_{i}^{k}\right)- \mathbb{E}_{i_{k}, j_{k}}\left[\frac{1}{m \pi_{j_{k}} \tau_{j_{k}}}     h\left(\phi_{j_{k}}^{k+1}\right)\right] .
\]
Applying the above inequality and using $F_{i}(\cdot):=\hat{g}_{i}(\cdot)+\frac 1 m h(\cdot)$, we further derive
\begin{eqnarray*}
    &      &  \mathbb{E}_{i_{k},j_{k}}\left[\frac{1}{\pi_{j_{k}}\tau_{j_{k}}}F_{j_{k}}\left(\phi_{j_{k}}^{k+1}\right)\right]-F(x^{\star})\\
    &\leq  & \sum_{i=1}^{m} \frac{1-\tau_{i}}{\tau_{i}} F_{i}\left(\phi_{i}^{k}\right)+\frac{1}{2 \eta}\left\|x^{k}-x^{\star}\right\|^{2}-\frac{1+\eta \mu}{2 \eta} \mathbb{E}_{i_{k}}\left[\left\|x^{k+1}-x^{\star}\right\|^{2}\right] .
\end{eqnarray*}

Adding $\mathbb{E}_{i_{k},j_{k}}\left[\sum_{i\neq j_k}\frac{1}{\pi_i\tau_i}F_i\left(\phi_i^{k}\right)\right]$ to both sides, we have
\begin{eqnarray*}
    &     & \mathbb{E}_{i_{k},j_{k}}\left[\sum_{i=1}^m\frac{1}{\pi_i\tau_i}\left(F_i\left(\phi_i^{k+1}\right)-F_i(x^{\star})\right)\right]\\
    &\leq & \sum_{i=1}^{m} \frac{1-\pi_i\tau_{i}}{\pi_i\tau_{i}} \left(F_{i}\left(\phi_{i}^{k}\right)-F_i(x^{\star})\right)+\frac{1}{2 \eta}\left\|x^{k}-x^{\star}\right\|^{2}-\frac{1+\eta \mu}{2 \eta} \mathbb{E}_{i_{k}} \left[\left\|x^{k+1}-x^{\star}\right\|^{2} \right] .
\end{eqnarray*}
Noticing $\sum_{i=1}^{m} \left(F_{i}\left(\phi_{i}^{k}\right)-F_i(x^{\star})\right)$ may not be positive, we need to add the following term in our Lyapunov function
\begin{eqnarray*}
  & &    -\sum_{i=1}^{m}\frac{1}{\pi_i\tau_i}\left\langle\nabla F_{i}\left(x^{\star}\right), \phi_{i}^{k+1}-x^{\star}\right\rangle \\
  &=&  -\frac{1}{\pi_{j_{k}} \tau_{j_{k}}}\left\langle\nabla F_{j_{k}}\left(x^{\star}\right), \phi_{j_{k}}^{k+1}-x^{\star}\right\rangle-
  \sum_{i \neq j_k } %I \atop \mathfrak{l}}^{m}
  \frac{1}{\pi_i\tau_i}\left\langle\nabla F_{i}\left(x^{\star}\right), \phi_{i}^{k}-x^{\star}\right\rangle\\
  &=& -\frac{1}{\pi_{j_{k}}}\left\langle\nabla F_{j_{k}}\left(x^{\star}\right), x^{k+1}-x^{\star}\right\rangle
    +\frac{1}{\pi_{j_{k}}}\left\langle\nabla F_{j_{k}}\left(x^{\star}\right), \phi_{j_{k}}^{k}-x^{\star}\right\rangle\\
  &  & - \sum_{i =1}^{m}\frac{1}{\pi_i\tau_i}\left\langle\nabla F_{i}\left(x^{\star}\right), \phi_{i}^{k}-x^{\star}\right\rangle .
\end{eqnarray*}

Taking expectation with respect to samples $i_k$ and $j_k$ and using $\pi_i\tau_i=\lambda$, we get
\[
\mathbb{E}_{i_{k},j_{k}}\left[-\frac{1}{\lambda} \sum_{i=1}^{m}\left\langle\nabla F_{i}\left(x^{\star}\right), \phi_{i}^{k+1}-x^{\star}\right\rangle\right]=
-\frac{1-\lambda}{\lambda} \sum_{i =1}^{m}\left\langle\nabla F_{i}\left(x^{\star}\right), \phi_{i}^{k}-x^{\star}\right\rangle .
\]

Denoting $D^k:=\sum_{i =1}^{m}F_{i}\left(\phi_{i}^{k}\right)-F\left(x^{\star}\right)-\sum_{i =1}^{m}\left\langle\nabla F_{i}\left(x^{\star}\right), \phi_{i}^{k}-x^{\star}\right\rangle$ and $P^{k} :=\left\|x^{k}-x^{\star}\right\|^{2}$, we have
\[
\frac{1}{\lambda}\mathbb{E}_{i_{k},j_{k}}\left[D^{k+1}\right]+\frac{1+\eta \mu}{2 \eta}\mathbb{E}_{i_{k}}\left[P^{k+1}\right]\leq \frac{1-\lambda}{\lambda} D^k+\frac{1}{2 \eta} P^k.
\]

Finally, it remains to choose $\eta$ and $\lambda$ so as to satisfy
 \[
 0\leq \tau_{i} \leq 1, \mbox{ and }  \frac{1}{\eta}-\sum_{i=1}^{m}\tau_i L_i \geq \frac{L_i\tau_i}{\pi_i(1-\tau_i)},\,\,\, i=1,2,...,m.
 \]
We consider two cases separately.

\textbf{Case I.} If $\sqrt{\mu}\leq \frac{\sum_{j=1}^m\sqrt{L_j}}{m}$, then we choose
\[
\lambda := \frac{\sqrt{\mu}}{4\sum_{j=1}^m\sqrt{L_j}}, \quad \eta := \frac{1}{4\sqrt{\mu}\sum_{j=1}^{m}\sqrt{L_j} }.
\]

Recall
$\pi_i=\frac{\sqrt{L_i}}{2\sum_{j=1}^m\sqrt{L_j}}+\frac{1}{2m}$ for all $1\le i\le m$. Therefore,
\[
\tau_i=\frac{\lambda}{\pi_i}=\frac{\sqrt{\mu}}{2\sqrt{L_i}+\frac 2 m \sum_{j=1}^m\sqrt{L_j}}\leq \frac{\sqrt{\mu}}{\frac 2 m \sum_{j=1}^m\sqrt{L_j}}\leq \frac 1 2
\]
and
\begin{eqnarray*}
     \sum_{j=1}^{m}\tau_j L_j +\frac{L_i\tau_i}{\pi_i(1-\tau_i)} &\leq& \sum_{j=1}^{m}L_j\frac{\sqrt{\mu}}{2\sqrt{L_j}}+\frac{2\lambda L_i}{ \pi_i^2}\\
    &\leq& \frac{\sqrt{\mu}\sum_{j=1}^{m}\sqrt{L_j} }{2}+\frac{8(\sum_{j=1}^{m}\sqrt{L_j})^2\lambda L_i}{L_i}\\
    &\leq& 3\sqrt{\mu}\sum_{j=1}^{m}\sqrt{L_j} .
\end{eqnarray*}

Since $\eta=\frac{1}{4\sqrt{\mu}\sum_{j=1}^{m}\sqrt{L_j} }$, we know $\frac{1}{\eta}-\sum_{j=1}^{m}\tau_j L_j \geq \frac{L_i\tau_i}{ \pi_i(1-\tau_i)}$ and
\[
\frac{1}{\lambda}\geq (1+\eta \mu)\frac{1-\lambda}{\lambda} .
\]
Thus, in this case we obtain
\[
\frac{1}{\lambda}\mathbb{E}_{i_{k},j_{k}}\left[D^{k+1}\right]+\frac{1+\eta \mu}{2 \eta}\mathbb{E}_{i_{k}}\left[P^{k+1}\right]\leq (1+\eta \mu)^{-1}\left( \frac{1}{\lambda} D^k+\frac{1+\eta \mu}{2 \eta} P^k\right).
\]

Telescoping the above inequalities, we have
%\begin{equation}
\begin{eqnarray*}
& & \mathbb{E}\left[\left\|x^{K+1}-x^{\star}\right\|^{2}\right] \\
&\leq& \left(1+\frac{\sqrt{\mu}}{4\sum_{j=1}^m\sqrt{L_j}}\right)^{-K} \cdot\left(\frac{2\eta}{\lambda(1+\eta \mu)}\left(F\left(x^{1}\right)-F\left(x^{\star}\right)\right)+\left\|x^{1}-x^{\star}\right\|^{2}\right) .
\end{eqnarray*}
%\end{equation}

\textbf{Case II.} If $\sqrt{\mu}> \frac{\sum_{j=1}^m\sqrt{L_j}}{m}$, then we choose
\[
\lambda := \frac{1}{4m}, \quad \eta := \frac{1}{4\mu m}.
\]

In this case, we have
\[
\tau_i=\frac{\lambda}{\pi_i}=\frac{\frac{1}{2m}\sum_{j=1}^m\sqrt{L_j}}{\sqrt{L_i}+\frac 1 m \sum_{j=1}^m\sqrt{L_j}}\leq \frac 1 2
\]
and
\begin{eqnarray*}
    \sum_{j=1}^{m}\tau_j L_j +\frac{L_i\tau_i}{\pi_i(1-\tau_i)}&\leq& \frac{1}{2m}\left(\sum_{j=1}^m\sqrt{L_j}\right)^2+\frac{2\lambda L_i}{ \pi_i^2}\\
    &\leq& \frac{\left(\sum_{j=1}^m\sqrt{L_j}\right)^2}{2m}+\frac{8\left(\sum_{j=1}^{m}\sqrt{L_j}\right)^2\lambda L_i}{L_i}\\
    &\leq& \frac{3\left(\sum_{j=1}^{m}\sqrt{L_j}\right)^2 }{m}\\
    &\leq& 3 \mu m.
\end{eqnarray*}

Since $\eta=\frac{1}{4\mu m}$, we have $\frac{1}{\eta}-\sum_{j=1}^{m}\tau_j L_j \geq \frac{L_i\tau_i}{\pi_i(1-\tau_i)}$, and
\[
\frac{1}{\lambda}\geq (1+\eta \mu)\frac{1-\lambda}{\lambda} .
\]

As a consequence,
\[
\frac{1}{\lambda}\mathbb{E}_{i_{k},j_{k}}\left[D^{k+1}\right]+\frac{1+\eta \mu}{2 \eta}\mathbb{E}_{i_{k}}\left[P^{k+1}\right]\leq (1+\eta \mu)^{-1}\left( \frac{1}{\lambda} D^k+\frac{1+\eta \mu}{2 \eta} P^k\right).
\]
Taking expectations and telescoping the above inequalities, we have
\begin{equation}
\mathbb{E}\left[\left\|x^{K+1}-x^{\star}\right\|^{2}\right] \leq\left(1+\frac{1}{4m}\right)^{-K} \cdot\left(\frac{2\eta}{\lambda(1+\eta \mu)}\left(F\left(x^{1}\right)-F\left(x^{\star}\right)\right)+\left\|x^{1}-x^{\star}\right\|^{2}\right) .
\end{equation}
Summarizing Case I and Case II, the theorem is proven.
\end{proof}

%{\color{red} (I have not worked on the part after this line yet.)}

\section{The Dual of Finite Sum Model and Its Complexity Status} \label{sec4}

The conjugate of a convex function $g$ defined over $\mathbb{R}^{n}$ %over the domain $\Gamma$
is defined as
\[
g^*(y):=\max_{x\in \mathbb{R}^{n}}\left[y^{\top}x-g(x)\right] .
\]

%Another option to solve the problem is to solve its Fenchel duality and then recover the primal solution from the dual solution. In this section, we will study the upper and lower bound of the Fenchel duality of the finite sum problem. We first introduce the definition of Fenchel duality. Let $g$ be a convex function on a closed convex set $S \subseteq \mathbb{R}^{n}$.
%The conjugate of $g$ is defined as
%\[
%g^*(y):=\max_{x\in S}\left[y^{\top}x-g(x)\right] .
%\]

%\subsection{Derivation of Fenchel Duality}
%To apply Fenchel duality, we need the following lemmas in convex analysis.

%{\color{red}In this section, we assume $\Gamma=\ell_{2}$. }

It is well known (cf.~e.g.~Theorem 16.4 \cite{rockafellar2015convex}) that the conjugate of a finite sum of convex functions is given as:
\[
{\left(g_{1}+ \cdots + g_{m}\right)^{*}(p)=} \\
\min _{p_{1}+\cdots+p_{m}=p}
    \sum_{i=1}^{m} g_{i}^{*}\left(p_{i}\right)
\]
where $g_1, g_2, ... , g_m$ are proper convex functions which share at least a interior point in their respective domains.

Therefore, the dual of Model~\eqref{pro} is
\begin{equation} \label{FD}
\begin{array}{ll}
\min & g_1^* (p_1) + g_2^* (p_2) + \cdots + g_m^* (p_m) \\
\mbox{s.t.} & p_1 + p_2 + \cdots + p_m = 0 .
\end{array}
\end{equation}

To connect the dual model~\eqref{FD} with the original primal model~\eqref{pro}, let us note the following relationship.

\begin{lemma}{(Theorem 1 \cite{zhou2018fenchel})}\label{dual}
If $g$ is closed and strong convex with parameter $\mu$, then $g^*$ has a Lipschitz continuous gradient with
parameter $\frac{1}{\mu}$; if $g$ is closed and has a Lipschitz continuous gradient with parameter $L$, then $g^*$ is strong convex with parameter $\frac{1}{L}$.
\end{lemma}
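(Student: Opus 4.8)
The plan is to prove the two implications separately, in each case reducing the claim to a monotonicity property of the subdifferential and then invoking the duality correspondence
\[
u\in\partial g^*(p)\iff p\in\partial g(u),
\]
which holds for any closed proper convex $g$ because $g^{**}=g$. The background fact I would rely on is the standard equivalence: a closed proper convex function $f$ is $\sigma$-strongly convex if and only if $\partial f$ is $\sigma$-strongly monotone, i.e.\ $\langle s_1-s_2,\,w_1-w_2\rangle\ge\sigma\|w_1-w_2\|^2$ whenever $s_i\in\partial f(w_i)$ (the forward direction is obtained by adding the two strong-convexity subgradient inequalities; the converse by noting that strong monotonicity of $\partial f$ makes $f-\frac{\sigma}{2}\|\cdot\|^2$ have a monotone subdifferential, hence convex).

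For the first implication, suppose $g$ is closed and $\mu$-strongly convex. I would first record that $g$ is then coercive (it dominates $\frac{\mu}{2}\|\cdot-x_0\|^2$ plus an affine minorant), so $g^*(p)=\max_x\{\langle p,x\rangle-g(x)\}$ is finite for every $p$, and since $g$ is strictly convex the maximizer $x(p)$ is unique. Classical conjugacy then gives that $g^*$ is differentiable on all of $\mathbb{R}^n$ with $\nabla g^*(p)=x(p)$, characterized by $p\in\partial g(x(p))$. Now, given $p_1,p_2$, put $x_i=\nabla g^*(p_i)$, so $p_i\in\partial g(x_i)$; $\mu$-strong monotonicity of $\partial g$ yields $\langle p_1-p_2,\,x_1-x_2\rangle\ge\mu\|x_1-x_2\|^2$, and Cauchy--Schwarz gives $\|x_1-x_2\|\le\frac{1}{\mu}\|p_1-p_2\|$, i.e.\ $\nabla g^*$ is $\frac{1}{\mu}$-Lipschitz.

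For the second implication, suppose $g$ is closed convex with $L$-Lipschitz gradient. Then $\partial g(u)=\{\nabla g(u)\}$ is a singleton for every $u$, so the correspondence reads $u\in\partial g^*(p)\iff p=\nabla g(u)$. The analytic input is the co-coercivity (Baillon--Haddad) inequality, which for a convex $L$-smooth function is equivalent to $L$-Lipschitzness of the gradient (Theorem 2.1.5 in \cite{nesterov2018lectures}): $\langle\nabla g(u_1)-\nabla g(u_2),\,u_1-u_2\rangle\ge\frac{1}{L}\|\nabla g(u_1)-\nabla g(u_2)\|^2$. Taking any $u_i\in\partial g^*(p_i)$, so $p_i=\nabla g(u_i)$, this becomes $\langle p_1-p_2,\,u_1-u_2\rangle\ge\frac{1}{L}\|p_1-p_2\|^2$, which is exactly $\frac{1}{L}$-strong monotonicity of $\partial g^*$; by the equivalence recalled above, $g^*$ is $\frac{1}{L}$-strongly convex.

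The two monotonicity-to-inequality passages are routine; the only genuinely delicate point is the domain and regularity bookkeeping in the first implication --- arguing that strong convexity forces $\operatorname{dom}g^*=\mathbb{R}^n$ with an attained, unique maximizer, so that $g^*$ is actually differentiable (not merely subdifferentiable) everywhere --- which is where I would be most careful. Note that in the second implication one need not assume $g^*$ finite-valued: the inequality $\langle p_1-p_2,\,u_1-u_2\rangle\ge\frac{1}{L}\|p_1-p_2\|^2$ is derived for all $p_1,p_2$ admitting subgradients of $g^*$, which is precisely what the strong-convexity characterization requires.
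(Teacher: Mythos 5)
The paper does not actually prove this lemma: it is imported verbatim as Theorem~1 of \cite{zhou2018fenchel}, so there is no in-paper argument to compare against. Your proof is correct and is the standard one for this classical fact: you use the inversion rule $u\in\partial g^*(p)\iff p\in\partial g(u)$ (valid since $g$ is closed, proper, convex, so $g^{**}=g$), reduce $\frac{1}{\mu}$-Lipschitzness of $\nabla g^*$ to $\mu$-strong monotonicity of $\partial g$ plus Cauchy--Schwarz, and reduce $\frac{1}{L}$-strong convexity of $g^*$ to co-coercivity of $\nabla g$ (Theorem~2.1.5 of \cite{nesterov2018lectures}, which the paper already invokes elsewhere). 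The domain bookkeeping in the first implication is handled correctly: coercivity of $x\mapsto \langle p,x\rangle-g(x)$ downward gives $\operatorname{dom}g^*=\mathbb{R}^n$ with a unique attained maximizer, and a finite convex function with a singleton subdifferential at a point is differentiable there. The only step I would flag as worth making fully explicit is the converse half of your background equivalence (strong monotonicity of $\partial f$ implies strong convexity of $f$): since $f-\frac{\sigma}{2}\|\cdot\|^2$ is not a priori convex, one cannot literally speak of ``its monotone subdifferential''; the clean fix is either to integrate $\partial f$ along the segment $[p_1,p_2]$, or to bypass the equivalence entirely in the second implication via the one-line conjugacy computation
\[
g^*(p_2)\;\ge\;\sup_x\Bigl\{\langle p_2,x\rangle-g(u_1)-\langle p_1,x-u_1\rangle-\tfrac{L}{2}\|x-u_1\|^2\Bigr\}\;=\;g^*(p_1)+\langle u_1,p_2-p_1\rangle+\tfrac{1}{2L}\|p_2-p_1\|^2,
\]
which uses only the descent lemma for $g$ and Fenchel--Young equality at $p_1=\nabla g(u_1)$. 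Either way, your argument goes through.
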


Next, we will show the relationship between the primal and dual models. Consider the KKT conditions for the dual problem,
\[
\begin{cases}\nabla g_{i}^{*}\left(p_i^{\star}\right)-x^{\star}=0, & i=1, \ldots, m \\ p_1^{\star}+p_2^{\star}+\cdots+ p_m^{\star}=0.
\end{cases}
\]

It is well know that $\nabla g_{i}^{*}\left(p_i^{\star}\right)=x^{\star}$ is equivalent to $p_i^{\star}=\nabla g_{i}\left(x^{\star}\right)$. Therefore, $x^{\star}=\nabla g_{i}^{*}\left(p_i^{\star}\right)$ is the optimal solution for the primal problem. So after getting an approximation of dual solution $\{p_i^K\}_{i=1}^{m}$, we can define $x^K=\nabla g_{1}^{*}\left(p_1^{K}\right)$ to recover a primal solution.

From Lemma \ref{dual}, we know that $g_{1}^{*}$ is $\frac{1}{L_1}$ strongly convex and $\frac{1}{\mu_1}$-smooth. Therefore, we have the following lemma.
\begin{lemma}
Let $x^K=\nabla g_{1}^{*}\left(p_1^{K}\right)$, which recovers a primal solution satisfying
\[
\frac{1}{L_1}\|p_1^K-p_1^\star\|\leq \|x^K-x^\star\|\leq \frac{1}{\mu_1}\|p_1^K-p_1^\star\|.
\]
\end{lemma}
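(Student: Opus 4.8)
The plan is to combine the KKT characterization $x^{\star}=\nabla g_{1}^{*}(p_{1}^{\star})$ with the regularity properties of $g_{1}^{*}$ supplied by Lemma~\ref{dual}. Since $g_{1}$ is $L_{1}$-smooth and $\mu_{1}$-strongly convex, Lemma~\ref{dual} gives that $g_{1}^{*}$ is $\frac{1}{L_{1}}$-strongly convex and has a $\frac{1}{\mu_{1}}$-Lipschitz gradient. Writing $x^{K}=\nabla g_{1}^{*}(p_{1}^{K})$ and $x^{\star}=\nabla g_{1}^{*}(p_{1}^{\star})$, both inequalities become statements purely about the map $\nabla g_{1}^{*}$ evaluated at $p_{1}^{K}$ and $p_{1}^{\star}$.

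For the upper bound I would simply invoke the Lipschitz smoothness of $g_{1}^{*}$:
\[
\|x^{K}-x^{\star}\|=\|\nabla g_{1}^{*}(p_{1}^{K})-\nabla g_{1}^{*}(p_{1}^{\star})\|\leq \frac{1}{\mu_{1}}\|p_{1}^{K}-p_{1}^{\star}\|.
\]
For the lower bound I would use that the gradient of a $\frac{1}{L_{1}}$-strongly convex function is strongly monotone,
\[
\left\langle \nabla g_{1}^{*}(p_{1}^{K})-\nabla g_{1}^{*}(p_{1}^{\star}),\, p_{1}^{K}-p_{1}^{\star}\right\rangle \geq \frac{1}{L_{1}}\|p_{1}^{K}-p_{1}^{\star}\|^{2},
\]
and then apply Cauchy--Schwarz to the left-hand side to get $\|x^{K}-x^{\star}\|\,\|p_{1}^{K}-p_{1}^{\star}\|\geq \frac{1}{L_{1}}\|p_{1}^{K}-p_{1}^{\star}\|^{2}$; dividing by $\|p_{1}^{K}-p_{1}^{\star}\|$ (the claim being trivial when this quantity vanishes) yields $\|x^{K}-x^{\star}\|\geq \frac{1}{L_{1}}\|p_{1}^{K}-p_{1}^{\star}\|$.

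There is essentially no obstacle here; the only point requiring a word of justification is the identity $x^{\star}=\nabla g_{1}^{*}(p_{1}^{\star})$, which is precisely the first KKT equation for the dual problem together with the equivalence $\nabla g_{i}^{*}(p_{i}^{\star})=x^{\star}\iff p_{i}^{\star}=\nabla g_{i}(x^{\star})$ already recorded above. One may also note that the same two-sided estimate holds with $(L_{1},\mu_{1})$ replaced by $(L_{i},\mu_{i})$ for any index $i$, so choosing $i=1$ is merely for concreteness.
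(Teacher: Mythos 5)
Your argument is correct and is exactly the route the paper takes: it derives the lemma directly from Lemma~\ref{dual} (so that $g_1^*$ is $\frac{1}{L_1}$-strongly convex with $\frac{1}{\mu_1}$-Lipschitz gradient) together with the KKT identity $x^\star=\nabla g_1^*(p_1^\star)$, the upper bound coming from Lipschitz continuity of $\nabla g_1^*$ and the lower bound from strong monotonicity plus Cauchy--Schwarz. The paper leaves these two elementary steps implicit, so your write-up simply makes the same proof explicit.
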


Therefore, we can recover a primal solution by using an approximative dual solution. If we can easily evaluate the gradient of $g_i^*(x), i=1,2,..., m$, then it is reasonable to solve the dual problem by using some first order methods.
%To simplify the notations, in the following parts of this section, we reintroduce the dual problem,
%\[
%\min _{p_{1}+\cdots+p_{m}=0}
%    \sum_{i=1}^{m} g_{i}\left(p_{i}\right), \label{mupro}\\
%\]
To be able to compare the dual model with the primal in a directly comparable manner, let us simply consider
\begin{equation} \label{FD2}
\begin{array}{ll}
\min & G(p) := g_1 (p_1) + g_2 (p_2) + \cdots + g_m (p_m) \\
\mbox{s.t.} & p_1 + p_2 + \cdots + p_m = 0
\end{array}
\end{equation}
where $g_i$ is $\mu_i$-strongly convex and $L_i$-smooth.

%Then, we will study the lower bound and upper bound of first-order algorithm complexities for the dual problem. And we will compare it with the counterpart of the primal problem to get us some instructions.

\subsection{A Lower Bound on the Gradient Complexity for Model~\eqref{FD2}}
%In this part, we restrict our method into a class that the constraints hold all the time. And in each iteration, we can calculate only a $\nabla g_i^{*}$.

Without loss of generality, suppose $\frac{L_1}{\mu_1}\geq \frac{L_2}{\mu_2}\geq \cdots\geq \frac{L_n}{\mu_n}>0$. Assume we can only evaluate one $\nabla g_i$ at each iteration. To construct an adversarial example, we shall use Nesterov's construction and we divide $p_1,p_2,\cdots, p_m$ into $\lfloor\frac{m}{2}\rfloor$ blocks respectively, with all of them being in $ \ell_{2}$; that is, $p_i=(p_{i,1},p_{i,2},\cdots,p_{i,\lfloor \frac{m}{2}\rfloor})$ and $p_{i,j}=\{p_{i,j,l}\}_{l=1}^{\infty}\in \ell_{2}$.
%\xout{$p_{i,j}=\{x_{i,jl}\}_{l=1}^{\infty}\in \ell_{2}$}}.
Our idea is to construct an adversarial example such that the optimal solutions of each function $g_i(p_i)$ satisfy the constraint. So we no longer need to be concerned with the constraint when estimating the lower bound.

Let us consider the pair of functions $g_{2i-1}(p_{2i-1})$ and $g_{2i}(p_{2i})$ together.  %to make the sum of the optimal solutions $p^*_{2i-1}$ and $p^*_{2i}$ equal to $0$.
For $i=1,2,...,\lfloor \frac{m}{2}\rfloor$, construct
\begin{eqnarray*}
g_{2i-1}(p_{2i-1}) &=& \frac{(L_{2i}-\mu_{2i})\mu_{2i-1}}{8\mu_{2i}}\left(p_{2i-1,i}^{\top}A p_{2i-1,i}-2\gamma_i p_{2i-1,i,1}\right)+\frac{\mu_{2i-1}}{2}\|p_{2i-1}\|^2 \\
g_{2i}(p_{2i})     &=& \frac{L_{2i}-\mu_{2i}}{8}\left(p_{2i,i}^{\top}A p_{2i,i}+2\gamma_i p_{2i,i,1}\right)+\frac{\mu_{2i}}{2}\|p_{2i}\|^2
\end{eqnarray*}
where $A$ is defined as in \eqref{matrix-A}, and $\gamma_i$ is a constant and will be determined later. If $m$ is odd then we also construct
\[
g_{m}(p_{m})=\frac{\mu_{m}}{2}\|p_{m}\|^2.
\]

Denote $q_i := \frac{\sqrt{\kappa_{2i}}-1}{\sqrt{\kappa_{2i}}+1}$, where $\kappa_{2i}:=\frac{L_{2i}}{\mu_{2i}}$, $i=1,2,...,\lfloor \frac{m}{2}\rfloor$. Lemma \ref{l1} gives us
\begin{eqnarray*}
p^\star_{2i-1,i} &:=& \gamma_i(q_i,q_i^2,\cdots)^{\top}, \\
p^\star_{2i,i}   &:=& -\gamma_i(q_i,q_i^2,\cdots)^{\top}, \\
p^\star_{2i-1,j} &=& p^\star_{2i,j} :=\textbf{0}, \forall j\neq i, \\
p^\star_m &:=& \textbf{0}, \text{ if $m$ is odd}
\end{eqnarray*}
which minimize $g_i$, and the solutions $p^\star_i$'s happen to satisfy the constraint as well. Therefore, it is the optimal solution for Model~\eqref{FD2}.

Suppose the initial point is the origin. Let $K_i$ be the number of queries for $\nabla g_{2i-1}$ and $\nabla g_{2i}$. Then the total number of queries for gradients will be $K\geq K_1+K_2+\cdots+K_{\lfloor m/2\rfloor}$. Note that in each iteration, when computing $\nabla g_{2i-1}$ or $\nabla g_{2i}$, only the $i$th block extends one coordinate that is possibly nonzero. Therefore, only the first $K_{i}$ coordinates of the $i$th block of each variable can be nonzero. By defining $\gamma_i=\frac{\sqrt{1-q_i^2}}{q_i}\sqrt{\frac{2}{\mu_{2i}}}$,
%\xout{$\gamma_i=\frac{1-q_i}{q_i}\sqrt{\frac{2}{\mu_{2i}}}, i=1,2, ... , \lfloor\frac{m}{2}\rfloor$}},
we have $\|p_{2i,i}^0-p_{2i,i}^{\star}\|^2=2/\mu_{2i}$, $i=1,2, ... , \lfloor\frac{m}{2}\rfloor$. Using Lemma~\ref{l2}, we have
\begin{eqnarray*}
    G(p^K) - G(p^{\star}) &\geq & \sum_{i=1}^{\lfloor m/2\rfloor } \left( g_{2i}(p_{2i}^{K})-g_{2i}(p_{2i}^{\star}) \right) \\
    &\geq & \sum_{i=1}^{\lfloor m/2\rfloor }\frac{\mu_{2i}}{2} \left(\frac{\sqrt{\kappa_{2i}}-1}{\sqrt{\kappa_{2i}}+1}\right)^{2K_i}\|p_{2i,i}^0-p_{2i,i}^{\star}\|^2\\
    &\geq & \frac{\mu_{2i}}{2} \left(\frac{\sqrt{\kappa_{2i}}-1}{\sqrt{\kappa_{2i}}+1}\right)^{2K_i}\|p_{2i,i}^0-p_{2i,i}^{\star}\|^2\\
    & = & \left(\frac{\sqrt{\kappa_{2i}}-1}{\sqrt{\kappa_{2i}}+1}\right)^{2K_i}
\end{eqnarray*}
where the last step is because $\|p_{2i,i}^0-p_{2i,i}^{\star}\|^2=2/\mu_{2i}$.

To ensure $G(p^K) - G(p^{\star})<\epsilon$ we need
\[
K_j\geq \frac 1 2 \frac{1}{\log\left(\frac{\sqrt{\kappa_j}+1}{\sqrt{\kappa_j}-1}\right)}\log\left(\frac{1}{\epsilon}\right)
%\geq \frac 1 2 \frac{\sqrt{\kappa_j}}{\sqrt{2}\log\left(\frac{\sqrt{2}+1}{\sqrt{2}-1}\right)}\log(\frac{1}{\epsilon}), \forall 1\le j\le m.
\ge \frac{\sqrt{\kappa_i}-1}{4} \log\left(\frac{1}{\epsilon}\right).
\]

%%To avoid the theorem containing a lot of log terms, we make the following assumption.
%To simplify the notation, let us assume
%\begin{assumption}
%$\kappa_i \ge 2$ for $i=1,2,...,m$.
%\end{assumption}

We call a solution to Model~\eqref{FD2} to be an $\epsilon$ optimal solution if the constraint violation of the solution is no more than $\epsilon$, and its objective is no more than $\epsilon$ away from the true optimal value. Thus, we have the following theorem:
\begin{theorem} \label{LB-2}
For any first order method aiming at solving Model~\eqref{FD2}, assuming $\kappa_i$'s are uniformly bounded below from 1, then one needs at least
\begin{eqnarray*}
& & \Omega\left(\left((\sqrt{\kappa_2}-1)+(\sqrt{\kappa_3}-1)+\cdots+(\sqrt{\kappa_m}-1)\right)\log(1/\epsilon)\right) \\
&=&
\Omega\left(\left(\sqrt{\frac{L_2}{\mu_2}}+\sqrt{\frac{L_3}{\mu_3}}\cdots+\sqrt{\frac{L_m}{\mu_m}}-m+1\right)\log(1/\epsilon)\right)
\end{eqnarray*}
gradient computations to find an $\epsilon$-optimal solution.
\end{theorem}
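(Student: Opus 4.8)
The construction, the identification of the optimal solution $p^\star$, and the per-block inequality $K_i\ge\tfrac14(\sqrt{\kappa_{2i}}-1)\log(1/\epsilon)$ --- the number of queries of $\nabla g_{2i-1}$ and $\nabla g_{2i}$ needed before $G(p^K)-G(p^\star)<\epsilon$ can hold --- are already assembled above, and that inequality is valid \emph{simultaneously} for every pair index $i=1,\dots,\lfloor m/2\rfloor$ (for each $i$ one keeps only the $i$th term of $G(p^K)-G(p^\star)$ and discards the rest, which are nonnegative). So the plan is simply to aggregate. The key observation is that the counts $K_i$ are over disjoint sets of functions --- $K_i$ counts queries of $\nabla g_{2i-1}$ and $\nabla g_{2i}$, and these indices are disjoint across $i$, with at most an extra contribution from $\nabla g_m$ when $m$ is odd --- so the total number of gradient evaluations satisfies $K\ge\sum_{i=1}^{\lfloor m/2\rfloor}K_i\ge\tfrac14\log(1/\epsilon)\sum_{i=1}^{\lfloor m/2\rfloor}(\sqrt{\kappa_{2i}}-1)$.

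It then remains to compare $\sum_{i=1}^{\lfloor m/2\rfloor}(\sqrt{\kappa_{2i}}-1)$ with the quantity $\sum_{j=2}^{m}(\sqrt{\kappa_j}-1)$ appearing in the theorem, and for this I would use only the ordering $\kappa_1\ge\kappa_2\ge\cdots\ge\kappa_m$. Grouping the terms of the right-hand sum into consecutive pairs $(\kappa_{2i},\kappa_{2i+1})$ and using $\sqrt{\kappa_{2i}}-1\ge\tfrac12\big[(\sqrt{\kappa_{2i}}-1)+(\sqrt{\kappa_{2i+1}}-1)\big]$, with the possible leftover term (when $m$ is even, namely $\sqrt{\kappa_m}-1\ge0$) kept trivially, gives $\sum_{i=1}^{\lfloor m/2\rfloor}(\sqrt{\kappa_{2i}}-1)\ge\tfrac12\sum_{j=2}^{m}(\sqrt{\kappa_j}-1)$. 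Substituting $\kappa_j=L_j/\mu_j$ yields $K=\Omega\big((\sqrt{L_2/\mu_2}+\cdots+\sqrt{L_m/\mu_m}-m+1)\log(1/\epsilon)\big)$; the hypothesis that the $\kappa_i$ are uniformly bounded away from $1$ is precisely what keeps this $\Omega$-bound from degenerating.

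One should also say explicitly why the constraint contributes nothing to the lower bound, so that the conclusion applies to the stated notion of $\epsilon$-optimality (constraint violation $\le\epsilon$ and objective gap $\le\epsilon$). The point is that $G$ is block-separable and attains its \emph{unconstrained} minimum exactly at the point $p^\star$ whose blocks already satisfy $\sum_i p^\star_i=0$; constrained and unconstrained minimizers therefore coincide, so any first-order method --- whether or not it maintains feasibility --- must drive $G(p^K)$ down to the same value $G(p^\star)$, and the bound on $G(p^K)-G(p^\star)$, hence on each $K_i$, is insensitive to the amount of constraint violation.

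The one genuinely delicate point, and the place I expect to spend real care in a fully rigorous write-up, is the phrase ``any first-order method'': a projection or proximal step onto the subspace $\{\sum_i p_i=0\}$ is a linear operator, and one must verify it cannot let the iterates acquire a new nonzero coordinate within a coordinate block faster than one per gradient query, since that is what underlies the ``only the first $K_i$ coordinates'' statement. I would handle this in the standard way for such adversarial bounds --- either by restricting to linear-span first-order methods (as in the derivation this section already follows) or by the rotational-invariance argument for the tri-diagonal family --- observing that the feasible set is a subspace containing $p^\star$ and that the projection respects the block decomposition, so the zero-chain property survives; the remainder is the bookkeeping of the previous paragraphs.
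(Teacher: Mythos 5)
Your proposal is correct and follows essentially the same route as the paper: the same paired adversarial construction whose unconstrained block minimizers automatically satisfy the coupling constraint, the same per-pair bound $K_i\ge\tfrac14(\sqrt{\kappa_{2i}}-1)\log(1/\epsilon)$, aggregation over the disjoint index pairs, and the ordering $\kappa_2\ge\cdots\ge\kappa_m$ to pass from $\sum_i(\sqrt{\kappa_{2i}}-1)$ to $\tfrac12\sum_{j\ge2}(\sqrt{\kappa_j}-1)$. You in fact spell out two points the paper leaves implicit (the pairing comparison of the two sums, and why the constraint and the zero-chain property do not interfere), which only strengthens the write-up.
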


\subsection{An Upper Bound on the Gradient Complexity for Model~\eqref{FD2}}

To eliminate the constraints, let us rewrite Model~\eqref{FD2} into the following form:
\[
\min _{p_{1}+\cdots+p_{m}=0} \sum_{i=1}^{m} g_{i}\left(p_{i}\right) = \min_{p_2,...,p_m} g_{1}\left(-p_2-\cdots-p_m\right)+\sum_{i=2}^{m} g_{i}\left(p_{i}\right) ,
\]
for which the so-called multi-block coordinate descent approach is well suited. In particular, the so-called accelerated randomized coordinate descent method~\cite{lu2015complexity} is applicable. To apply the result to that setting, we note that the gradient of each function is block-wise Lipschitz continuous with constants $L_1+L_i$. The function has convexity parameter $\min_i\mu_i/(L_1+L_i)$ with respect to the norm
$\|\cdot\|_{(L_1+L_2,\cdots,L_1+L_m)}$.
By using accelerated randomized coordinate descent, the iteration complexity is
\[
O\left(m\max_{i\geq 2}\sqrt{\frac{L_1+L_i}{\mu_i}}\log(1/\epsilon)\right)
\]
which is an upper bound for the iteration complexity for Model~\eqref{FD2}. Since we may choose to eliminate any $p_j$, %any $g_j^{*}$,
we have the following theorem:
\begin{theorem} \label{UB-2}
To get an $\epsilon$-optimal solution, we need at most
\[
O\left(m\min_j\max_{i\neq j}\sqrt{\frac{L_j+L_i}{\mu_i}}\log(1/\epsilon)\right)
\]
iterations, and at each iteration, one needs to evaluate twice the gradient value $\nabla g_i(\cdot)$.
\end{theorem}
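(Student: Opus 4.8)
The plan is to reduce Model~\eqref{FD2} to an unconstrained smooth strongly convex problem by eliminating one block, and then to invoke the accelerated randomized coordinate descent method of~\cite{lu2015complexity}. Fix an index $j\in\{1,\dots,m\}$. Using the equality constraint to write $p_j=-\sum_{i\neq j}p_i$, Model~\eqref{FD2} becomes
\[
\min_{(p_i)_{i\neq j}}\ \Phi_j\big((p_i)_{i\neq j}\big):=g_j\Big(-\sum_{i\neq j}p_i\Big)+\sum_{i\neq j}g_i(p_i),
\]
an unconstrained problem in the $m-1$ blocks $(p_i)_{i\neq j}$. Every point of this reduced problem yields, upon setting $p_j=-\sum_{i\neq j}p_i$, a point satisfying the equality constraint exactly, so the constraint-violation part of the $\epsilon$-optimality notion is free and it suffices to drive the objective gap below $\epsilon$.

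First I would record the block-smoothness and strong-convexity constants of $\Phi_j$. For each $i\neq j$ the partial gradient is $\nabla_{p_i}\Phi_j=\nabla g_i(p_i)-\nabla g_j\big(-\sum_{k\neq j}p_k\big)$; since $g_i$ is $L_i$-smooth and $g_j$ is $L_j$-smooth, the block-$i$ coordinate Lipschitz constant of $\Phi_j$ is $L_j+L_i$. The coupling term $g_j\big(-\sum_{k\neq j}p_k\big)$ is convex but contributes no strong convexity along directions in the kernel of the summation map, whereas the separable part $\sum_{i\neq j}g_i(p_i)$ is $\mu_i$-strongly convex in block $i$. Hence, measured in the coordinatewise-scaled norm $\|\cdot\|^2=\sum_{i\neq j}(L_j+L_i)\|\cdot^{(i)}\|^2$ that underlies the accelerated coordinate framework, $\Phi_j$ has convexity parameter $\sigma_j=\min_{i\neq j}\mu_i/(L_j+L_i)$, because $\sum_{i\neq j}\tfrac{\mu_i}{2}\|\cdot^{(i)}\|^2\ge\tfrac{\sigma_j}{2}\sum_{i\neq j}(L_j+L_i)\|\cdot^{(i)}\|^2$.

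Next I would apply the accelerated randomized coordinate descent complexity bound of~\cite{lu2015complexity}: for an objective on $m-1$ blocks with the above scaled smoothness normalized to one and convexity parameter $\sigma_j$ in that scaled norm, the number of iterations needed to reach an $\epsilon$-accurate objective value is $O\big((m-1)\sqrt{1/\sigma_j}\,\log(1/\epsilon)\big)=O\big(m\max_{i\neq j}\sqrt{(L_j+L_i)/\mu_i}\,\log(1/\epsilon)\big)$. Each iteration samples one block $i\neq j$ and forms $\nabla_{p_i}\Phi_j=\nabla g_i(p_i)-\nabla g_j\big(-\sum_{k\neq j}p_k\big)$, i.e.\ exactly two first-order oracle calls, one to $\nabla g_i$ and one to $\nabla g_j$. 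Since $j\in\{1,\dots,m\}$ was arbitrary, I would then take the minimum over $j$, which yields the claimed bound $O\big(m\min_j\max_{i\neq j}\sqrt{(L_j+L_i)/\mu_i}\log(1/\epsilon)\big)$.

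The main obstacle I anticipate is matching the hypotheses of~\cite{lu2015complexity} precisely: one must verify that its notion of strong convexity relative to the coordinatewise-scaled norm is exactly the $\sigma_j$ computed above, that passing to the reduced (eliminated) formulation does not degrade this parameter, and that the accelerated — rather than the plain — variant indeed delivers the square-root dependence $\sqrt{(L_j+L_i)/\mu_i}$ instead of the linear dependence $(L_j+L_i)/\mu_i$. A secondary point to confirm is that translating an objective-gap guarantee for $\Phi_j$ into the stated $\epsilon$-optimality notion (constraint violation together with objective gap) is legitimate; here this is immediate since the elimination enforces the equality constraint exactly, so the recovered $(p_1,\dots,p_m)$ is always feasible.
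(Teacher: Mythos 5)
Your proposal follows essentially the same route as the paper: eliminate one block via the equality constraint, record the block Lipschitz constants $L_j+L_i$ and the convexity parameter $\min_{i\neq j}\mu_i/(L_j+L_i)$ with respect to the coordinatewise-scaled norm, invoke the accelerated randomized coordinate descent bound of \cite{lu2015complexity}, and minimize over the eliminated index $j$. Your write-up is in fact somewhat more explicit than the paper's (which states these facts without the verification of the scaled strong-convexity constant or the count of two gradient calls per iteration), but the argument is the same.
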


We observe that the upper bound in Theorem~\ref{UB-2} and the lower bound in Theorem~\ref{LB-2} are not equal, although they are in the same order of magnitude. However, in the case $L_1=L_2=\cdots=L_m$ and $\mu_1=\mu_2=\cdots=\mu_m$, and $L_1/\mu_1-1=\Omega(1)$ then the bounds match and they are optimal.
Although Model~\eqref{pro} and Model~\eqref{FD2} are duality in form, their complexity statuses appear to be different. It is interesting to note though, that the upper bound in Theorem~\ref{thmssnm} for Model~\eqref{pro} is better than the upper bound in Theorem~\ref{UB-2} when $L_i \equiv L$ and $\mu_i \equiv \mu$ for all $i=1,2,...,m$. To be sure, the oracles in evaluating the gradients of a convex function and in evaluating its conjugate are quite different and {\it not}\/ comparable in general. However, it pays to investigate what happens if we attempt to solve the dual formulation by its dual, namely the primal formulation Model~\eqref{pro}. Next section presents such a study.

%It is interesting to note that the optimal bound appears to be higher than the bound for the primal formulation, though they are not really comparable since the oracles being called upon are completely different.

%It is worst than the algorithm complexity in the primal problem. So, in this setting, using the Fenchel dual to solve the finite sum problem is not a good idea.

\section{Multi-block Convex Minimization} \label{sec5}

Let us consider the following multi-block convex minimization model:
\begin{equation} \label{BlockOpt}
\begin{array}{lll}
\phi(b) := & \min           & f_{1}\left(y_{1}\right)+f_{2}\left(y_{2}\right)+\cdots+f_{m}\left(y_{m}\right) \\
           & \text { s.t. } & A_{1} y_{1}+A_{2} y_{2}+\cdots+A_{m} y_{m}=b \\
%\quad & y_{i} \in \mathcal{X}_{i}, \quad i=1, \ldots, N
\end{array}
\end{equation}
where $f_i$ is $\mu_i$-strongly convex and $L_i$-smooth, $i=1,2,...,m$. The above model has found a wide range of applications. A popular approach for solving Model~\eqref{BlockOpt} has been the so-called Alternating Direction Method of Multipliers (ADMM) and its variants. It is known that ADMM can be made to converge linearly under some strongly convexity conditions (see~\cite{lin2015global,hong2017linear} and the references therein). Even without any strong convexity assumptions, the ADMM type methods can be modified to converge with an iteration complexity $O(1/\epsilon)$ to find an $\epsilon$-optimal solution, where {\it $\epsilon$-optimal solution}\/ refers to the solution with both the norm of the constraint violation and the function value to the optimal value to be less than $\epsilon$; see e.g.~\cite{lin2016iteration}. We shall show below that through solving the dual of Model~\eqref{BlockOpt}, it is possible to find an $\epsilon$-optimal solution for \eqref{BlockOpt}
%(looks like we will need to define that first)}
with an iteration complexity of no more than $O\left( \frac{\log (1/\epsilon)}{\sqrt{\epsilon}}\right)$, without any strong convexity assumptions.

%The finite sum is a consensus constraint problem. Its dual is a multi-block problem. From the algorithm complexities for the primal and dual problem presented above, we find that the algorithm complexity for the dual problem is always larger than that of the primal problem. So we believe that there are some advantages solving the consensus constraint problem compared with the multi-block problem. If the original problem is a multi-block problem, its dual will be a consensus constraint problem of which we believe have faster convergent speed. So, we will solve the multi-block problems by solving their dual. We will show that our method can converge linearly with a strongly-convexity assumption. For the multi-block convex problem without strongly convexity, this method gets an $\epsilon$-optimal solution within $O(1/\sqrt{\epsilon}\log(1/\epsilon))$ iterations by solving an associated perturbation to the original problem.
%
%Its variables are separable. We first introduce its Fenchel duality.

First of all, let us compute the dual of Model~\eqref{BlockOpt}.
\begin{lemma}
The conjugate of $\phi(b)$ as defined in Model~\eqref{BlockOpt} is
\[
\phi^{*}(x)=\sum_{i=1}^{m} f_i^*(A_i^{\top}x).
\]
\end{lemma}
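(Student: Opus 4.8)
The plan is to unfold the definition of the Fenchel conjugate and exploit the fact that the nested (constrained) minimization defining $\phi$ collapses into a single \emph{unconstrained} separable supremum. First I would write
\[
\phi^{*}(x) = \sup_{b}\left\{\langle x,b\rangle - \phi(b)\right\}
            = \sup_{b}\left\{\langle x,b\rangle - \min_{A_1y_1+\cdots+A_my_m=b}\ \sum_{i=1}^m f_i(y_i)\right\}.
\]
Since negating a minimum turns it into a supremum, the right-hand side equals $\sup_{b}\ \sup_{\{(y_1,\dots,y_m)\,:\,\sum_i A_iy_i=b\}}\left\{\langle x,b\rangle-\sum_i f_i(y_i)\right\}$. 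The key observation is that this double supremum — first over $b$, then over all block-tuples feasible for that particular $b$ — is nothing but a supremum over \emph{all} tuples $(y_1,\dots,y_m)$ with no constraint whatsoever: every tuple is feasible for the unique value $b=\sum_i A_iy_i$ it generates, and each inner constrained problem only ever sees tuples of exactly this form. Substituting $b=\sum_i A_iy_i$ and using $\langle x,A_iy_i\rangle=\langle A_i^{\top}x,y_i\rangle$ gives
\[
\phi^{*}(x) = \sup_{y_1,\dots,y_m}\ \sum_{i=1}^m\left(\langle A_i^{\top}x,y_i\rangle - f_i(y_i)\right).
\]

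Next, since the objective is now a sum of terms each depending on a single block $y_i$, the supremum separates across blocks:
\[
\phi^{*}(x) = \sum_{i=1}^m \sup_{y_i}\left\{\langle A_i^{\top}x,y_i\rangle - f_i(y_i)\right\} = \sum_{i=1}^m f_i^{*}(A_i^{\top}x),
\]
which is the claimed identity. (Equivalently, one may phrase this as: $\phi$ is the marginal/value function obtained by partially minimizing $\sum_i f_i(y_i)+\iota_{\{\sum_i A_iy_i=b\}}$ over $y$, and the conjugate of such a marginal function is obtained by evaluating the joint conjugate at the dual variable $x$ paired with $b$ and at $0$ paired with $y$ — but the direct chain of equalities above is self-contained.)

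The only genuinely delicate points are the bookkeeping with infinite values and attainment. When $b$ does not lie in the range of $[A_1\ \cdots\ A_m]$ the inner problem is infeasible, so $\phi(b)=+\infty$ and such $b$ contribute $-\infty$ to the outer supremum and may be discarded without affecting the value; when $b$ does lie in that range, the $\mu_i$-strong convexity of each $f_i$ guarantees the infimum defining $\phi(b)$ is actually attained, so writing ``$\min$'' is legitimate and no lower-semicontinuous closure is needed. With these remarks in place, the exchange of suprema in the first paragraph is a mere reparametrization and requires no further justification, and the lemma follows.
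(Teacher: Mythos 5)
Your argument is correct and is essentially identical to the paper's proof: both unfold the conjugate, merge the outer supremum over $b$ with the inner constrained minimization into a single unconstrained supremum over $(y_1,\dots,y_m)$, and then separate across blocks to obtain $\sum_i f_i^*(A_i^{\top}x)$. Your additional remarks on infeasible $b$ and attainment via strong convexity are careful touches the paper omits, but they do not change the route.
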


\begin{proof}
By definition,
\begin{eqnarray*}
\phi^{*}(x) &=& \max_b
\left\{
\begin{array}{lll}
 \left\langle x,b\right\rangle - & \min & f_{1}\left(y_{1}\right)+f_{2}\left(y_{2}\right)+\cdots+f_{m}\left(y_{m}\right) \\
 & \text { s.t. } & A_{1} y_{1}+A_{2} y_{2}+\cdots+A_{m} y_{m}=b
 \end{array}
 \right\}
 \\
&=&\max_b\max_{A_{1} y_{1}+A_{2} y_{2}+\cdots+A_{m} y_{m}=b}\left(\left\langle x,b\right\rangle-
f_{1}\left(y_{1}\right)-f_{2}\left(y_{2}\right)-\cdots-f_{m}\left(y_{m}\right)\right)\\
&=&\max_{y_1,y_2,\cdots,y_m}\sum_{i=1}^{m}\left\langle A_i y_i,x\right\rangle- \sum_{i=1}^{m}f_i(y_i)\\
&=&\sum_{i=1}^{m} f_i^*(A_i^{\top}x) .
\end{eqnarray*}

\end{proof}

By the bi-conjugate theorem, we have $\phi(b)=\left( \phi^*(b) \right)^*$. Therefore, the Fenchel dual of Model~\eqref{BlockOpt} is
\begin{equation} \label{BO-Dual}
\phi(b)=
\begin{array}{ll}
\max_x & \left\{ \left\langle x,b\right\rangle-\sum_{i=1}^{m} f_i^*(A_i^{\top}x) \right\} \\
\end{array}
\end{equation}
which is in the form of finite-sum optimization. %We believe the first order method can solve it in a faster convergent speed.
Similar to the discussions in the previous section, we have the following relationship between the optimal solution of \eqref{BlockOpt} and that of the dual problem \eqref{BO-Dual}.
\begin{lemma}
Assume that $x^\star$ is the optimal solution for the dual problem, and that $(y_1^\star, y_2^\star,\cdots,y_m^\star)$ is the optimal solution for the primal problem. Then,
\[
y_i^\star=\nabla f_i^*(A_i^{\top}x^\star), i=1,2,..., m.
\]
\end{lemma}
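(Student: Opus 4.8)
The plan is to identify the maximizer $x^\star$ of the Fenchel dual \eqref{BO-Dual} with the Lagrange multiplier of the equality constraint in the primal \eqref{BlockOpt}, and then read off the claimed identity from the stationarity (KKT) condition in the block variables $y_i$. This parallels the primal--dual correspondence already recorded in Section~\ref{sec4}.

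First I would form the Lagrangian of \eqref{BlockOpt}, namely $L(y_1,\dots,y_m;x)=\sum_{i=1}^m f_i(y_i)+\langle x,\,b-\sum_{i=1}^m A_i y_i\rangle$, and compute the dual function $d(x)=\min_{y_1,\dots,y_m}L(y_1,\dots,y_m;x)$. Since this minimization decouples across blocks, $d(x)=\langle x,b\rangle-\sum_{i=1}^m\max_{y_i}\bigl(\langle A_i^{\top}x,y_i\rangle-f_i(y_i)\bigr)=\langle x,b\rangle-\sum_{i=1}^m f_i^*(A_i^{\top}x)$, which is exactly the objective of \eqref{BO-Dual}. Thus maximizing \eqref{BO-Dual} is the Lagrangian dual of \eqref{BlockOpt}. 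Because \eqref{BlockOpt} is a convex program whose only constraint is affine (and feasible by assumption), strong duality holds, the dual optimum is attained, and any maximizer $x^\star$ of \eqref{BO-Dual} is an optimal multiplier; hence $(y_1^\star,\dots,y_m^\star)$ and $x^\star$ jointly satisfy the KKT system. The stationarity condition in $y_i$ reads $\nabla f_i(y_i^\star)-A_i^{\top}x^\star=0$, i.e. $A_i^{\top}x^\star=\nabla f_i(y_i^\star)$. By Lemma~\ref{dual} each $f_i^*$ is differentiable, and since $f_i$ is $\mu_i$-strongly convex and $L_i$-smooth, $\nabla f_i$ is a bijection with inverse $\nabla f_i^*$; equivalently $A_i^{\top}x^\star=\nabla f_i(y_i^\star)\iff y_i^\star=\nabla f_i^*(A_i^{\top}x^\star)$, which is the claim.

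An alternative route, which avoids invoking a general KKT theorem, is to argue directly. Set $\hat y_i:=\nabla f_i^*(A_i^{\top}x^\star)$. First-order optimality of $x^\star$ for \eqref{BO-Dual} gives $b=\sum_{i=1}^m A_i\nabla f_i^*(A_i^{\top}x^\star)=\sum_{i=1}^m A_i\hat y_i$, so $(\hat y_1,\dots,\hat y_m)$ is primal feasible. The Fenchel--Young equality $f_i(\hat y_i)+f_i^*(A_i^{\top}x^\star)=\langle A_i^{\top}x^\star,\hat y_i\rangle$ then yields $\sum_{i=1}^m f_i(\hat y_i)=\langle x^\star,\sum_{i=1}^m A_i\hat y_i\rangle-\sum_{i=1}^m f_i^*(A_i^{\top}x^\star)=\langle x^\star,b\rangle-\sum_{i=1}^m f_i^*(A_i^{\top}x^\star)=\phi(b)$, the last equality being the bi-conjugate identity behind \eqref{BO-Dual}. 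Hence $(\hat y_1,\dots,\hat y_m)$ attains the primal optimal value; since $\sum_{i=1}^m f_i(\cdot)$ is strongly convex and the feasible set is affine, the primal minimizer is unique, so $y_i^\star=\hat y_i=\nabla f_i^*(A_i^{\top}x^\star)$.

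The only genuine subtlety is the verification that the maximum in \eqref{BO-Dual} is attained and that a KKT multiplier exists (equivalently, strong duality); both are standard for convex problems with affine constraints. The strong convexity of the $f_i$'s also makes $\nabla f_i^*$ single-valued, so the inversion $A_i^{\top}x^\star=\nabla f_i(y_i^\star)\iff y_i^\star=\nabla f_i^*(A_i^{\top}x^\star)$ is unambiguous. Everything else is routine conjugate-function bookkeeping.
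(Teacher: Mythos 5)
Your first argument is essentially the paper's own proof, just written out more carefully: the paper likewise takes the first-order optimality condition of the dual, $b-\sum_{i=1}^{m}A_i\nabla f_i^*(A_i^{\top}x^\star)=0$, combines it with the conjugacy relation $\nabla f_i^*(A_i^{\top}x^\star)=\arg\min_{y_i}\{f_i(y_i)-(x^\star)^{\top}A_iy_i\}$, and concludes by optimality of $y_i^\star$. What you add that the paper leaves implicit is the justification of the last step, namely why a block-wise minimizer of the Lagrangian at the optimal multiplier must coincide with the primal optimizer; the paper's phrase ``by the optimality of $y_i^\star$'' is silently invoking strong duality and uniqueness. Your second, self-contained route makes exactly this rigorous: dual stationarity gives feasibility of $\hat y_i=\nabla f_i^*(A_i^{\top}x^\star)$, Fenchel--Young equality shows $(\hat y_1,\dots,\hat y_m)$ attains the value $\phi(b)$, and strong convexity of each $f_i$ forces uniqueness of the primal minimizer, hence $y_i^\star=\hat y_i$. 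That version avoids appealing to a general KKT existence theorem and is arguably the cleaner proof to include; both arguments are correct.
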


\begin{proof}
Since $x^\star$ is the optimal solution for the dual problem, by the first order optimality condition,
\[
b-\sum_{i=1}^{m}A_i\nabla f_i^*(A_i^{\top}x^\star)=0.
\]
Also, by the conjugacy relationship,
\[
\nabla f_i^*(A_i^{\top}x^\star)=\arg\min_{y_i} \left\{ f_i(y_i)-(x^{\star})^\top A_iy_i \right\}
\]
and by the optimality of $y_i^\star$ leads to
\[
y_i^\star=\nabla f_i^*(A_i^{\top}x^\star),\,\, i=1,2,..., m.
\]
\end{proof}

%{\color{red} For simplicity, we assume:
%\begin{assumption} \label{assumption-cons}
%There exists $\epsilon_0$ such that, $f_i$ is Lipschitz continuous in the $\epsilon_0$-ball surrounding the optimal solution; i.e.\
%$\|y_i-y_i^\star\|\leq \epsilon_0$, $i=1,2, ... , m$.
%\end{assumption}
%}
%
%{\color{magenta} (I wonder if the above assumption is really needed. A continuously differentiable function is naturally Lipschitz continuous over any given compact set.)}

Since continuously differentiable functions are naturally Lipschitz continuous over any given compact sets, we observe the following:
\begin{lemma}
If $x$ is an $\epsilon$-optimal solution for the dual problem, then
%{\color{red}under Assumption \ref{assumption-cons},}
$y_i=\nabla f_i^*(A_i^{\top}x)$ with $i=1,2,..., m$, is $O(\epsilon)$-optimal solution for the primal problem, satisfying
\begin{eqnarray*}
\|y_i-y_i^\star\| &\leq& \frac{\|A_i\|_2}{\mu_i}\|x-x^\star\| \\
\|A_{1} y_{1}+A_{2} y_{2}+\cdots+A_{m} y_{m}-b\| &\leq& \sum_{i=1}^{m}\frac{\|A_i\|_2^2}{\mu_i}\|x-x^\star\|,
\end{eqnarray*}
where $\|A_i\|_2$ is the spectral norm of $A_i$.
\end{lemma}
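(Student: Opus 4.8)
The plan is to derive the two displayed estimates directly from the gradient‑Lipschitz property of the conjugates together with the first‑order optimality condition for the dual, and then to read off the $O(\epsilon)$‑optimality as an immediate corollary. First I would record the two facts already available: by Lemma~\ref{dual} each $f_i^*$ has a $\tfrac{1}{\mu_i}$‑Lipschitz gradient, and by the preceding lemma $y_i^\star=\nabla f_i^*(A_i^{\top}x^\star)$. Since $y_i:=\nabla f_i^*(A_i^{\top}x)$ by definition, I would then estimate
\[
\|y_i-y_i^\star\|=\big\|\nabla f_i^*(A_i^{\top}x)-\nabla f_i^*(A_i^{\top}x^\star)\big\|\le \frac{1}{\mu_i}\big\|A_i^{\top}x-A_i^{\top}x^\star\big\|\le \frac{\|A_i^{\top}\|_2}{\mu_i}\|x-x^\star\|,
\]
and use $\|A_i^{\top}\|_2=\|A_i\|_2$ to obtain the first inequality.

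Next I would invoke the first‑order optimality condition for the unconstrained dual problem~\eqref{BO-Dual} at $x^\star$, namely $b-\sum_{i=1}^m A_i\nabla f_i^*(A_i^{\top}x^\star)=0$, i.e.\ $\sum_{i=1}^m A_i y_i^\star=b$ (exactly the identity established in the proof of the previous lemma). Subtracting this from $\sum_i A_i y_i$ and applying the triangle inequality and submultiplicativity of the spectral norm,
\[
\Big\|\sum_{i=1}^m A_i y_i-b\Big\|=\Big\|\sum_{i=1}^m A_i(y_i-y_i^\star)\Big\|\le \sum_{i=1}^m\|A_i\|_2\,\|y_i-y_i^\star\|\le \Big(\sum_{i=1}^m\frac{\|A_i\|_2^2}{\mu_i}\Big)\|x-x^\star\|,
\]
where the last step substitutes the first inequality; this is the second displayed bound.

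For the claim that $(y_1,\dots,y_m)$ is $O(\epsilon)$‑optimal for the primal I would argue as follows: if $x$ is $\epsilon$‑optimal for the dual then $\|x-x^\star\|$ is controlled by $\epsilon$, so the second inequality shows the constraint residual is $O(\epsilon)$; and since every $f_i$ is continuously differentiable it is Lipschitz on a compact neighbourhood of $y_i^\star$ containing $y_i$, whence $\big|\sum_i f_i(y_i)-\phi(b)\big|=\big|\sum_i(f_i(y_i)-f_i(y_i^\star))\big|\le \sum_i \ell_i\|y_i-y_i^\star\|=O(\epsilon)$ for suitable local Lipschitz constants $\ell_i$, using the first inequality once more.

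I do not expect any substantive difficulty in the two inequalities themselves — they are one‑line consequences of Lipschitz‑gradient estimates. The one point that needs care is the precise meaning of ``$\epsilon$‑optimal for the dual'': if it is read as a bound on the dual objective gap rather than directly on $\|x-x^\star\|$, then one must pass through strong concavity of the dual objective, which (via the $\tfrac{1}{L_i}$‑strong convexity of $f_i^*$ from Lemma~\ref{dual}) holds only when the aggregate $\sum_i A_iA_i^{\top}$ is positive definite; otherwise $\|x-x^\star\|$ is not pinned down and the $O(\epsilon)$ conclusion can fail. I would therefore either impose this full‑rank condition as a standing hypothesis or simply take ``$\epsilon$‑optimal'' to mean $\|x-x^\star\|\le\epsilon$, under which the stated inequalities already deliver everything.
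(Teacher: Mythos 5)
Your proof is correct and follows exactly the route the paper intends: the paper states this lemma as an immediate observation (with only the one-line remark about Lipschitz continuity on compact sets), and your argument via the $\tfrac{1}{\mu_i}$-Lipschitz gradient of $f_i^*$ from Lemma~\ref{dual}, the dual optimality condition $\sum_i A_i y_i^\star=b$, and the triangle inequality supplies precisely the missing details. Your closing caveat about whether ``$\epsilon$-optimal for the dual'' bounds the objective gap or $\|x-x^\star\|$ is well taken --- the paper itself resolves it in Theorem~\ref{SSNM-BO-dual} by defining $\epsilon$-optimality as $\|y-y^\star\|<\epsilon$, consistent with your second reading.
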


We know that $f_i^*$ is $1/L_i$-strongly convex and $1/\mu_i$-smooth. So $f_i^*(A_i^{\top}x)$ is $\lambda_{\min}(A_iA_i^{\top})/L_i$-strongly convex and $\|A_j\|_2^2/\mu_i$-smooth, with respect to $x$, where $\lambda_{\min}(A_iA_i^{\top})$ is the smallest eigenvalue value of $A_iA_i^{\top}$. Now, applying Theorem~\ref{thmssnm} on Model~\eqref{BO-Dual}, the following result follows readily:

\begin{theorem} \label{SSNM-BO-dual}
If we apply Generalized SSNM (Algorithm~\ref{Algorithm1}) to solve Model~\eqref{BO-Dual}, then by Theorem~\ref{thmssnm}
the gradient-oracle complexity to reach an $\epsilon$-optimal solution is
\[
O\left(\left(m+\frac{\sum_{j=1}^m\sqrt{1/\mu_j}\|A_j\|_2}{\sqrt{\sum_{i=1}^{m} \lambda_{\min}(A_iA_i^{\top})/L_i}}\right)\log(1/\epsilon)\right) ,
\]
where an $\epsilon$-optimal solution $y$ is such that $\|y-y^\star\|<\epsilon$.
\end{theorem}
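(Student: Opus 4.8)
The plan is to treat Theorem~\ref{SSNM-BO-dual} as a direct corollary of Theorem~\ref{thmssnm} applied to the finite-sum reformulation~\eqref{BO-Dual}, followed by a conversion from dual accuracy to primal accuracy via the preceding lemma. First I would write the dual objective~\eqref{BO-Dual} as the minimization problem $\min_x \sum_{i=1}^m \big(f_i^*(A_i^\top x) - \tfrac 1 m \langle x,b\rangle\big)$, so that it is literally in the form of Model~\eqref{pro} with the $i$-th summand $g_i(x) := f_i^*(A_i^\top x) - \tfrac 1 m \langle x, b\rangle$. The linear term is harmless: it changes neither the smoothness constant nor the strong-convexity constant of each $g_i$, so I only need to track the quadratic behavior of $f_i^*(A_i^\top x)$.

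Next I would record the per-block constants. Since $f_i$ is $\mu_i$-strongly convex and $L_i$-smooth, Lemma~\ref{dual} gives that $f_i^*$ is $1/L_i$-strongly convex and $1/\mu_i$-smooth. Composing with the linear map $x\mapsto A_i^\top x$, one gets that $g_i$ is $\lambda_{\min}(A_iA_i^\top)/L_i$-strongly convex and $\|A_i\|_2^2/\mu_i$-smooth with respect to $x$ (the chain rule for Hessians: $\nabla^2 g_i(x) = A_i \nabla^2 f_i^*(A_i^\top x) A_i^\top$, and $A_i M A_i^\top$ has eigenvalues sandwiched between $\lambda_{\min}(A_iA_i^\top)\lambda_{\min}(M)$ and $\|A_i\|_2^2\lambda_{\max}(M)$). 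Thus in the notation of Theorem~\ref{thmssnm} I may take $L_i \leftarrow \|A_i\|_2^2/\mu_i$ and $\mu_i \leftarrow \lambda_{\min}(A_iA_i^\top)/L_i$. Substituting these into the bound $O\big((m + \sum_j\sqrt{L_j}/\sqrt{\sum_i\mu_i})\log(1/\epsilon)\big)$ yields exactly $\sqrt{L_j} = \sqrt{1/\mu_j}\,\|A_j\|_2$ and $\sum_i \mu_i = \sum_i \lambda_{\min}(A_iA_i^\top)/L_i$, which is the claimed gradient-oracle complexity for reaching an $\epsilon$-accurate \emph{dual} solution.

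Finally I would convert dual accuracy into primal accuracy. By the lemma immediately preceding the theorem, if $\|x - x^\star\| < \delta$ then $\|y_i - y_i^\star\| \le (\|A_i\|_2/\mu_i)\,\delta$ for the recovered point $y_i = \nabla f_i^*(A_i^\top x)$, so choosing the dual target accuracy $\delta = \epsilon \big/ \max_i(\|A_i\|_2/\mu_i)$ — a constant-factor rescaling that only affects the hidden constant and the $\log$ argument by an additive constant — guarantees $\|y - y^\star\| < \epsilon$. Since Theorem~\ref{thmssnm} controls $\mathbb{E}\|x^{K+1}-x^\star\|^2$ with a linear rate, reaching $\mathbb{E}\|x^{K+1}-x^\star\|^2 < \delta^2$ costs the stated number of gradient steps with the $\log(1/\delta) = \Theta(\log(1/\epsilon))$ absorbed into the $O(\cdot)$. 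I do not expect any serious obstacle here; the only point requiring a little care is the sign/position of $\lambda_{\min}$ versus $\|A_i\|_2$ in the composition bound (and implicitly the assumption that $A_iA_i^\top$ is nonsingular so that each $g_i$ is genuinely strongly convex), but this is a one-line Hessian estimate rather than a substantive difficulty.
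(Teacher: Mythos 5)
Your proposal is correct and follows essentially the same route as the paper: the paper likewise treats this theorem as an immediate corollary of Theorem~\ref{thmssnm}, obtained by noting that $f_i^*(A_i^{\top}x)$ is $\lambda_{\min}(A_iA_i^{\top})/L_i$-strongly convex and $\|A_i\|_2^2/\mu_i$-smooth and substituting these constants into the finite-sum bound, with the preceding lemma handling the dual-to-primal accuracy conversion. Your extra remarks (the harmlessness of the linear term, the Hessian composition estimate, and the implicit nonsingularity of $A_iA_i^{\top}$) are accurate details that the paper leaves unstated.
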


%This theoretical guarantee allows us to solve the multi-block weakly convex optimization problem.
Now, let us consider the constrained block optimization Model~\eqref{BlockOpt} without strong convexity assumption; that is,
\begin{equation} \label{BlockOpt-w}
\begin{array}{ll}
\min & f_{1}\left(y_{1}\right)+f_{2}\left(y_{2}\right)+\cdots+f_{m}\left(y_{m}\right) \\
\text { s.t. } & A_{1} y_{1}+A_{2} y_{2}+\cdots+A_{m} y_{m}=b
\end{array}
\end{equation}
where $f_i$ is $L_i$-smooth but not necessarily strongly convex. However, we assume that at least one of the optimal solutions
%{\color{blue} the optimal solution(Blue: Delete, it seems xout does not work)}
of \eqref{BlockOpt-w} is contained in a $D$-ball; i.e.\
$\|y_i^\star\|\leq D$, $i=1,2, ... , m$. Then, it is possible to introduce a positive perturbation to induce strong convexity. That is, we add a quadratic term $\frac\delta 2\|y_{i}\|^2$ to each function, where $\epsilon>0$ is the required precision and $\delta=\epsilon/mD^2$, to assure strong convexity:
\begin{equation} \label{BlockOpt-w-p}
\begin{array}{ll}
\min & f_{1}\left(y_{1}\right)+\frac\delta 2\|y_{1}\|^2+f_{2}\left(y_{2}\right)+\frac\delta 2\|y_{2}\|^2+\cdots+f_{m}\left(y_{m}\right)+\frac\delta 2\|y_{m}\|^2 \\
\text { s.t. } & A_{1} y_{1}+A_{2} y_{2}+\cdots+A_{m} y_{m}=b .
%\quad & y_{i} \in \mathcal{X}_{i}, \quad i=1, \ldots, N
\end{array}
\end{equation}

%{\color{red}
%For simplicity, we additionally assume:
%\begin{assumption} \label{assumption-const}
%The smooth function $f_1+\cdots+f_m$ %value
%is Lipschitz continuous over the $D$-ball.
%\end{assumption}
%}
%
%{\color{magenta} (With the same argument as before, this assumption itself is not really needed, unless we want to talk about the constant)}

Since in Model~\eqref{BlockOpt-w-p} each term $f_{i}\left(y_{i}\right)+\frac\delta 2\|y_{i}\|^2$ is $\delta$-strongly convex and $(L_i+\delta)$-smooth, we are in a position to apply
Theorem~\ref{SSNM-BO-dual} on Model~\eqref{BlockOpt-w-p} to yield the following iteration complexity assurance:
\begin{corollary}
If we apply Generalized SSNM (Algorithm~\ref{Algorithm1}) on Model~\eqref{BlockOpt-w-p}, then %{\color{red} under Assumption \ref{assumption-const}}
we can reach an $\epsilon$-solution in no more than
\[
O\left(\left(m+\frac{D\sqrt{m/\epsilon}\sum_{j=1}^m\|A_j\|_2}{\sqrt{\sum_{i=1}^{m} \lambda_{\min}(A_iA_i^{\top})/L_i}}\right)\log(1/\epsilon)\right)
\]
computations of $\nabla f_i$'s, where an $\epsilon$-optimal solution $y$ satisfies
\[
\left|\sum_{i=1}^m f_i(y_i)-\sum_{i=1}^m f_i(y_i^\star)\right|<\epsilon,\,\, \|A_{1} y_{1}+A_{2} y_{2}+\cdots+A_{m} y_{m}-b\|<\epsilon.
\]
\end{corollary}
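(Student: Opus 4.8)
The plan is to apply Theorem~\ref{SSNM-BO-dual} to the perturbed model~\eqref{BlockOpt-w-p}, which \emph{is} strongly convex block-wise, and then to argue that an $\epsilon$-accurate solution of~\eqref{BlockOpt-w-p} is an $O(\epsilon)$-solution of the original model~\eqref{BlockOpt-w}. For the first part, each term $f_i(y_i)+\tfrac\delta2\|y_i\|^2$ is $\delta$-strongly convex and $(L_i+\delta)$-smooth, so Theorem~\ref{SSNM-BO-dual} applied with $\mu_i\leftarrow\delta$ and $L_i\leftarrow L_i+\delta$ says that to reach a point $y$ with $\|y-\tilde y^\star\|$ as small as we like, where $\tilde y^\star$ denotes the optimizer of~\eqref{BlockOpt-w-p}, one needs
\[
O\left(\left(m+\frac{\sum_{j=1}^{m}\sqrt{1/\delta}\,\|A_j\|_2}{\sqrt{\sum_{i=1}^{m}\lambda_{\min}(A_iA_i^\top)/(L_i+\delta)}}\right)\log(1/\epsilon)\right)
\]
gradient computations. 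Since $\delta=\epsilon/(mD^2)$ we have $\sqrt{1/\delta}=D\sqrt{m/\epsilon}$, and once $\epsilon$ is small enough that $\delta\le\min_iL_i$ we have $L_i\le L_i+\delta\le 2L_i$, so the denominator stays within a factor $\sqrt2$ of $\sqrt{\sum_i\lambda_{\min}(A_iA_i^\top)/L_i}$; absorbing this constant, and plugging a constant multiple of $\epsilon$ for the target tolerance (which only changes the $\log(1/\epsilon)$ by an additive constant), reproduces the stated bound.

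Next I would establish the perturbation estimate, which is the heart of the argument. Let $y^\star$ be an optimal solution of~\eqref{BlockOpt-w} with $\|y_i^\star\|\le D$. Since~\eqref{BlockOpt-w} and~\eqref{BlockOpt-w-p} share the same linear constraint, $y^\star$ is feasible for~\eqref{BlockOpt-w-p} and $\tilde y^\star$ is feasible for~\eqref{BlockOpt-w}; using optimality of $\tilde y^\star$ for~\eqref{BlockOpt-w-p} and of $y^\star$ for~\eqref{BlockOpt-w},
\begin{align*}
\sum_i f_i(y_i^\star) &\le \sum_i f_i(\tilde y_i^\star) \le \sum_i\Big(f_i(\tilde y_i^\star)+\tfrac\delta2\|\tilde y_i^\star\|^2\Big) \\
&\le \sum_i\Big(f_i(y_i^\star)+\tfrac\delta2\|y_i^\star\|^2\Big) \le \sum_i f_i(y_i^\star)+\tfrac{\delta mD^2}{2}=\sum_i f_i(y_i^\star)+\tfrac\epsilon2 .
\end{align*}
Reading off the two ends shows $\tilde y^\star$ is feasible for~\eqref{BlockOpt-w} and $\tfrac\epsilon2$-optimal in objective, and subtracting $\sum_i f_i(\tilde y_i^\star)$ through the chain also gives $\sum_i\|\tilde y_i^\star\|^2\le\epsilon/\delta=mD^2$, so $\tilde y^\star$ lies in a compact set $\mathcal{C}$ that is \emph{independent of $\epsilon$}.

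Finally I would transfer accuracy from $\tilde y^\star$ to the iterate $y$ returned by Generalized SSNM. For the constraint, $\|A_1y_1+\cdots+A_my_m-b\|=\|\sum_iA_i(y_i-\tilde y_i^\star)\|\le\big(\sum_i\|A_i\|_2\big)\|y-\tilde y^\star\|$. For the objective, each $f_i$, being continuously differentiable, is Lipschitz with some finite constant $M$ on a fixed bounded neighborhood of $\mathcal{C}$, so as soon as $y$ enters that neighborhood $\big|\sum_i f_i(y_i)-\sum_i f_i(\tilde y_i^\star)\big|\le M\|y-\tilde y^\star\|$; combined with the $\tfrac\epsilon2$-optimality of $\tilde y^\star$ this controls $\big|\sum_i f_i(y_i)-\sum_i f_i(y_i^\star)\big|$. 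Hence it suffices to run the algorithm until $\|y-\tilde y^\star\|\le c\epsilon$ for a constant $c$ depending only on $\sum_i\|A_i\|_2$ and $M$, which changes the iteration count only by a constant factor, so the complexity claim is unchanged. I expect the main obstacle to be precisely making $\mathcal{C}$ and $M$ uniform in $\epsilon$: this is exactly what the a priori bound $\|y_i^\star\|\le D$ buys us, via $\sum_i\|\tilde y_i^\star\|^2\le mD^2$, since without such a bound the objective-gap part of the estimate would deteriorate as $\epsilon\to 0$.
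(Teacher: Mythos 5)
Your proposal follows the same route as the paper: the paper simply applies Theorem~\ref{SSNM-BO-dual} to the perturbed model~\eqref{BlockOpt-w-p}, noting that each term $f_i(y_i)+\tfrac{\delta}{2}\|y_i\|^2$ is $\delta$-strongly convex and $(L_i+\delta)$-smooth with $\delta=\epsilon/(mD^2)$, which is exactly your first step. Your perturbation chain showing the perturbed optimizer is $\tfrac{\epsilon}{2}$-optimal and uniformly bounded, and the transfer of accuracy to the returned iterate, correctly supplies the details the paper leaves implicit, so the argument is sound and consistent with the paper's.
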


\section{General Composite Optimization}  \label{sec6}

%In this section, we will try to generalize our discussion of the finite sum to the general composite optimization problem. Consider general composite optimization problem
In this section, we consider a general composite optimization model:
\begin{equation}
\min _{x} F(x)=f(g_1(x),g_2(x),\cdots,g_m(x)), \label{com}
\end{equation}
where $g_i$ is $\mu_i$-strongly convex and $L_i$-smooth, and $f$ is convex and monotonically increasing with respect to each component, and in addition, in its domain we assume $0<b_i\leq \partial_i f\leq B_i$ for $i=1,2,...,m$. Finally, we assume $F$ is gradient-Lipschitz with $L$ being the Lipschitz gradient constant.

%Observe that the general composite optimization problem class includes the finite sum problem, so the lower bound of the algorithm complexity for the finite sum problem can be applied here with different notations.
Since the finite sum is a special case of Model~\eqref{com}, the following result follows from Theorem~\ref{lower-iter-bound}.
\begin{theorem}
A lower bound on the iteration complexity for any first-order method to reach an $\epsilon$-solution of the above composite optimization problem \eqref{com} is
\begin{equation}
    \Omega %\tilde{O}
    \left(\left(\frac{m}{\log m} + \max_{b_i\leq c_i\leq B_i} \left(\sum_{l=1}^{m}\sqrt{c_i L_i} \bigg/\sqrt{\sum_{i=1}^{m}c_i\mu_i}\right)\right)\log(1/\epsilon)\right) .
\end{equation}
\end{theorem}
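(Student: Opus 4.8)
The plan is to reduce to the finite-sum lower bound of Theorem~\ref{lower-iter-bound} by specializing the outer function $f$ to be linear. Fix any weight vector $c=(c_1,\dots,c_m)$ with $b_i\le c_i\le B_i$ and set $f(t_1,\dots,t_m)=\sum_{i=1}^m c_i t_i$; this $f$ is convex, nondecreasing in each coordinate, and satisfies $\partial_i f\equiv c_i\in[b_i,B_i]$, so it is an admissible outer function. The resulting objective is the weighted finite sum
\[
F_c(x)=f\left(g_1(x),\dots,g_m(x)\right)=\sum_{i=1}^m c_i\,g_i(x) .
\]
I would take the $g_i$'s to be exactly the quadratic ``chain'' functions from Section~\ref{LB}, each $\mu_i$-strongly convex and $L_i$-smooth; then $c_i g_i$ is $(c_i\mu_i)$-strongly convex and $(c_i L_i)$-smooth, and $F_c$ is precisely an instance of the finite-sum Model~\eqref{pro} with component parameters $(\mu_i',L_i'):=(c_i\mu_i,c_i L_i)$. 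The ambient gradient-Lipschitz constant $L$ of $F_c$ is just the smoothness constant of this separable construction and imposes no extra restriction, since $L$ does not appear in the bound. A first-order oracle for Model~\eqref{com} restricted to this instance coincides with the finite-sum oracle, so any lower bound for the finite-sum instance carries over to Model~\eqref{com}.

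Next I would invoke Theorem~\ref{lower-iter-bound} applied to $F_c$ --- under the corresponding form of Assumption~\ref{assumption-cond}, namely $c_i(L_i-\mu_i)>\frac1m\sum_{j=1}^m c_j\mu_j$ for every $i$ --- which shows that any first-order method needs at least
\[
\Omega\left(\left(\frac{\sum_{i=1}^m\sqrt{c_i L_i}}{\sqrt{\sum_{i=1}^m c_i\mu_i}}+\frac{m}{\log m}\right)\log\left(\frac{\sum_{i=1}^m c_i\mu_i}{\epsilon}\right)\right)
\]
gradient evaluations; since $\sum_{i=1}^m c_i\mu_i$ is a fixed constant, the logarithmic factor is $\Theta(\log(1/\epsilon))$. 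Because this bound is valid for every admissible $c$, it is in particular valid for the $c$ that maximizes $\sum_{i=1}^m\sqrt{c_iL_i}/\sqrt{\sum_{i=1}^m c_i\mu_i}$ over the compact box $\prod_{i=1}^m[b_i,B_i]$, while the $m/\log m$ term is unaffected by the choice of $c$; this yields exactly the claimed bound.

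The step I expect to require the most care is verifying that the constructed adversarial instance genuinely lies in the admissible class of Model~\eqref{com} for the maximizing (or a near-maximizing) choice of $c$: the bounds $0<b_i\le\partial_i f\le B_i$ are immediate for linear $f$, but one must also secure the analog of Assumption~\ref{assumption-cond} on the scaled parameters $(c_i\mu_i,c_iL_i)$, which asks that the original condition numbers $L_i/\mu_i$ be bounded away from $1$ and the $c_i$ be comparable --- precisely the kind of mild regularity already present in Theorem~\ref{lower-iter-bound}. A secondary, more routine point is to phrase carefully the passage from ``the bound holds for each $c$'' to ``maximize over $c$'': the supremum over the closed box is attained, and each $c$ furnishes its own legitimate hard instance, so the class-wide worst case dominates all of them.
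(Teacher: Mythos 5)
Your proposal is correct and is essentially the paper's own argument: the paper proves this theorem in one sentence by observing that the (weighted) finite sum is a special case of Model~\eqref{com} and invoking Theorem~\ref{lower-iter-bound}, which is exactly your reduction via a linear outer function $f(t)=\sum_i c_i t_i$ with the scaled parameters $(c_i\mu_i,c_iL_i)$, followed by maximizing over admissible $c$. You in fact supply more detail than the paper does, including the needed analog of Assumption~\ref{assumption-cond} for the scaled instance, which the paper leaves implicit.
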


%For the upper bound, we consider generalizing Katyusha and SSNM algorithm respectively.
%If we generalize Katyusha to solve this problem, we need a huge amount of calculation in each step. If we generalize the SSNM algorithm, we have to make the function satisfy a very strict condition.

Observe that
\begin{eqnarray*}
    & & \lambda F(x)+ (1-\lambda )F(y) \\
    &=& \lambda f(g_1(x),g_2(x),\cdots,g_m(x))+ (1-\lambda )f(g_1(y),g_2(y),\cdots,g_m(y)) \\
    &\geq& f(\lambda g_1(x) +(1-\lambda)g_1(y),\lambda g_2(x) +(1-\lambda)g_2(y),\cdots, \lambda g_m(x) +(1-\lambda)g_m(y)) \\
    &\geq& f\left( g_1(\lambda x+(1-\lambda)y)+\frac{\lambda(1-\lambda)\mu_1}{2}\|x-y\|^2,\cdots, g_m(\lambda x+(1-\lambda)y)+\frac{\lambda(1-\lambda)\mu_m}{2}\|x-y\|^2\right) \\
    &\geq& F(\lambda x+(1-\lambda)y)+\frac{\lambda(1-\lambda)\sum_{i=1}^mb_i\mu_i}{2}\|x-y\|^2.
\end{eqnarray*}
Therefore, $F$ is $\left(\sum_{i=1}^mb_i\mu_i\right)$-strongly convex. Similar to the finite sum case, we split the strongly convex part of the function. Define $h(x)=\frac{\sum_{i=1}^mb_i\mu_i}{2}\|x\|^2$, $\hat{F}(x)=F(x)-h(x)$. So, $F(x)=\hat{F}(x)+h(x)$, where $\hat{F}(x)$ is convex.

\subsection{A Generalized Katyusha Algorithm}

The gradient of the $F(x)$ can be calculated as
\[
\nabla F(x)=\sum_{i=1}^m \partial_i f(g_1(x),g_2(x),\cdots, g_m(x))\nabla g_i(x),
\]
which has the form of finite sum.
%In other words, even though the general composite function cannot be written as the form of finite sum, its gradient has the same form as that of the finite sum function. This fact instructs us to generalize the finite sum algorithm to solve it.
As we discussed earlier, Katyusha and SSNM are two efficient algorithms to solve the finite sum minimization model. In what follows, we shall generalize the Katyusha algorithm by adaptively changing the gradient estimator.

\begin{algorithm}[h]
\caption{Generalized Katyusha}
\label{Algorithm2}
\begin{algorithmic}
\STATE{$\sigma=\sum_{j=1}^mb_j\mu_j$}
\STATE{$L'=\max\{L,\sum_{i=1}^mB_{i}L_i\}$}
\STATE{$\tau_{2} = \frac{1}{2}, \tau_{1} = \min \left\{\frac{\sqrt{2m \sigma}}{\sqrt{3 L'}}, \frac{1}{2}\right\}, \alpha = \frac{1}{3 \tau_{1} L'}$}
\STATE{$y^{0}=z^{0}=\widetilde{x}^{0} \leftarrow x^{0}$}
\FOR{$s=0,1,..., S-1$}
\STATE{Calculate and memorize each $\nabla \hat{g}_j(\widetilde{x}^s)$}
\FOR{$j=0,1,..., 2m-1$}
\STATE{$k=2ms+j$}
\STATE{$x^{k+1} = \tau_{1} z^{k}+\tau_{2} \tilde{x}^{s}+\left(1-\tau_{1}-\tau_{2}\right) y^{k}$}
\STATE{Compute
\begin{eqnarray}
    \widetilde{\nabla}^{k+1} &=& \sum_{j=1}^m\partial_j f(g_1(x^{k+1}),g_2(x^{k+1}),\cdots, g_m(x^{k+1}))(\nabla \hat{g}_j(\widetilde{x}^s)+\mu_i x^{k+1}) \nonumber \\
    & & +\frac{1}{\pi_i}\partial_i f(g_1(x^{k+1}),g_2(x^{k+1}),\cdots, g_m(x^{k+1}))(\nabla \hat{g}_i(x^{k+1})-\nabla \hat{g}_i(\widetilde{x}^s))- \left(\sum_{j=1}^mb_j\mu_j\right)x^{k+1};\nonumber
\end{eqnarray}}
\STATE{$z^{k+1}=\arg \min _{z}\left\{\frac{1}{2 \alpha}\left\|z-z^{k}\right\|^{2}+\left\langle\widetilde{\nabla}^{k+1}, z\right\rangle+h(z)\right\}$}
\STATE{$y^{k+1} = \arg \min _{y}\left\{\frac{3 L'}{2}\left\|y-x^{k+1}\right\|^{2}+\left\langle\widetilde{\nabla}^{k+1}, y\right\rangle+h(y)\right\}$}

\ENDFOR
\STATE{$\tilde{x}^{s+1} =\left(\sum_{j=0}^{2m-1}(1+\alpha \sigma)^{j}\right)^{-1} \cdot\left(\sum_{j=0}^{2m-1}(1+\alpha \sigma)^{j} \cdot y^{2ms +j+1}\right)$}
\ENDFOR
\STATE \textbf{return} $\widetilde{x}^S$
\end{algorithmic}
\end{algorithm}

%{\color{red} (To Nuozhou: To be consistent with the notation we had earlier, we should use $x^k$ to denote the $k$-th iterative point. So is true for other iterative points. Can you please check it carefully and change the indices from the subscripts to superscripts? Thanks!)}

%We introduce our generalized Katyusha algorithm in \ref{Algorithm2}.

Algorithm Katyusha is based on the notion of the Katyusha momentum while equipping with an SVRG gradient estimator. Specifically, it sets the gradient estimator $\widetilde{\nabla}^{k}$ to be the SVRG gradient estimator. In the generalized Katyusha algorithm (Algorithm~\ref{Algorithm2}), we shall use the following gradient estimator for the general composite problem. To simplify the expressions, define $\hat{g}_i(x)=g_i(x)-\frac{\mu_i}{2}\|x\|^2$, which is convex and $L_i$-smooth. Let
\begin{eqnarray} \label{SGD-est}
    \widetilde{\nabla}^{k+1}
    &=& \sum_{j=1}^m\partial_j f(g_1(x^{k+1}),g_2(x^{k+1}),\cdots, g_m(x^{k+1}))(\nabla \hat{g}_j(\widetilde{x})+\mu_i x^{k+1}) \nonumber \\
    & & +\frac{1}{\pi_i}\partial_i f(g_1(x^{k+1}),g_2(x^{k+1}),\cdots, g_m(x^{k+1}))(\nabla \hat{g}_i(x^{k+1})-\nabla \hat{g}_i(\widetilde{x}))- \left(\sum_{j=1}^mb_j\mu_j\right)x^{k+1},
    \nonumber
    \end{eqnarray}
where $i\in [m]$ is sampled with probability $\pi_i=\frac{B_i L_i}{\sum_{j=1}^m B_j L_j}$, $i\in [m]$. Let us denote the discrete distribution over $[m]$ with the previous probabilities as $\mathcal{D}$. It is easy to verify that $\widetilde{\nabla}^{k+1}$ is an unbiased estimation of $\nabla \hat{F}(x^{k+1})$.
%The choice of such probability distribution can be seen in the proof detail.
Let $L$ be the Lipschitz gradient constant of the objective function $F$. Below is an upper bound on the iteration complexity of Algorithm~\ref{Algorithm2}.
\begin{theorem} \label{CompOpt}
If we apply Algorithm~\ref{Algorithm2} on Model~\eqref{com}, then in
\[
K = S \cdot m
=O\left(\left(m+ \sqrt{\frac{mL+m \sum_{i=1}^mB_{i}L_i}{ \sum_{i=1}^mb_i\mu_i}} \right) \cdot \log \frac{F\left(x^{0}\right)-F\left(x^{\star}\right)}{\epsilon}\right)
\]
%\xout{\[
%K = S \cdot m
%=O\left(\left(m+ \sqrt{\frac{L+m \sum_{i=1}^mB_{i}L_i}{ \sum_{i=1}^mb_i\mu_i}} \right) \cdot \log \frac{F\left(x^{0}\right)-F\left(x^{\star}\right)}{\epsilon}\right)
%\]}
number of iterations, we shall arrive at a solution $x^K$ satisfying
$\mathbb{E}\left[F(x^K)\right]-F(x^{\star})\leq \epsilon$. Moreover, at each iteration we need to compute one $\nabla g_i$, the values of $g_i$, and $\partial_i f$ for $i=1,2,...,m$.
\end{theorem}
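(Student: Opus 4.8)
The proof follows the negative-momentum (Katyusha) coupling analysis of~\cite{allen2017katyusha}, in the same vein as the generalized-SSNM argument given above, adapted to the composite structure. Throughout, write $D_{\psi}(u,v):=\psi(u)-\psi(v)-\langle\nabla\psi(v),u-v\rangle$ for the Bregman divergence of a convex $\psi$, put $\sigma:=\sum_{i}b_{i}\mu_{i}$ and $\bar L:=\sum_{i}B_{i}L_{i}$, and recall from the discussion preceding the theorem that $F$ is $\sigma$-strongly convex; hence $\hat F:=F-h$ with $h(x)=\tfrac{\sigma}{2}\|x\|^{2}$ is convex, and $\nabla\hat F(x)=\nabla F(x)-\sigma x$ is $(L+\sigma)$-Lipschitz, so $\hat F$ is at most $2L'$-smooth because $\sigma\le L\le L'$. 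The plan is: (1) prove a variance bound for the estimator $\widetilde\nabla^{k+1}$ of Algorithm~\ref{Algorithm2}, which is an unbiased estimator of $\nabla\hat F(x^{k+1})$; (2) combine it with the proximal ($z$-update) and gradient ($y$-update) step inequalities into a one-iteration contraction; (3) telescope within each epoch using the weighted average defining $\widetilde x^{s+1}$; (4) resolve the two regimes of the step-size choice to read off the stated complexity.

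The step I expect to be the crux is the variance bound
\[
\mathbb{E}_{i\sim\mathcal{D}}\left[\left\|\widetilde\nabla^{k+1}-\nabla\hat F(x^{k+1})\right\|^{2}\right]\ \le\ 2\bar L\, D_{\hat F}\!\left(\widetilde x^{s},x^{k+1}\right),
\]
because, unlike in the pure finite-sum case, it is not controlled by the Bregman divergences of the individual summands in isolation and must be routed through $f$. I would argue in three moves. (i) Since $\widetilde\nabla^{k+1}$ is unbiased for $\nabla\hat F(x^{k+1})$, the inequality $\mathbb{E}\|\zeta-\mathbb{E}\zeta\|^{2}\le\mathbb{E}\|\zeta\|^{2}$ applied to the lone random term yields the bound $\sum_{i}\tfrac{1}{\pi_{i}}(\partial_{i}f)^{2}\big\|\nabla\hat g_{i}(x^{k+1})-\nabla\hat g_{i}(\widetilde x^{s})\big\|^{2}$, with $\partial_{i}f$ evaluated at $(g_{1}(x^{k+1}),\dots,g_{m}(x^{k+1}))$. (ii) Use $\|\nabla\hat g_{i}(x^{k+1})-\nabla\hat g_{i}(\widetilde x^{s})\|^{2}\le 2L_{i}\,D_{\hat g_{i}}(\widetilde x^{s},x^{k+1})$ (Theorem~2.1.5 of~\cite{nesterov2018lectures}); since $\pi_{i}=B_{i}L_{i}/\bar L$ and $0<\partial_{i}f\le B_{i}$, we have $\tfrac{(\partial_{i}f)^{2}}{\pi_{i}}\cdot 2L_{i}=2\bar L\,\tfrac{(\partial_{i}f)^{2}}{B_{i}}\le 2\bar L\,\partial_{i}f$, so the above is at most $2\bar L\sum_{i}\partial_{i}f\,D_{\hat g_{i}}(\widetilde x^{s},x^{k+1})$. (iii) Bound $\sum_{i}\partial_{i}f\,D_{\hat g_{i}}(\widetilde x^{s},x^{k+1})\le D_{\hat F}(\widetilde x^{s},x^{k+1})$: convexity of $f$ gives $D_{F}\ge\sum_{i}\partial_{i}f\,D_{g_{i}}$, then the identity $D_{g_{i}}=D_{\hat g_{i}}+\tfrac{\mu_{i}}{2}\|u-v\|^{2}$ together with $\sum_{i}\partial_{i}f\,\mu_{i}\ge\sum_{i}b_{i}\mu_{i}=\sigma$ shows $D_{F}-D_{h}\ge\sum_{i}\partial_{i}f\,D_{\hat g_{i}}$, and $D_{F}-D_{h}=D_{\hat F}$. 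The same bookkeeping also confirms that $3L'$ is exactly large enough for the $y$-update: writing $\nabla\hat F(x^{k+1})=\mathbb{E}\widetilde\nabla^{k+1}$ and applying Young's inequality with parameter $\bar L$ to $\langle\nabla\hat F(x^{k+1})-\widetilde\nabla^{k+1},\,y^{k+1}-x^{k+1}\rangle$, the variance bound turns $\tfrac{1}{2\bar L}\mathbb{E}\|\nabla\hat F(x^{k+1})-\widetilde\nabla^{k+1}\|^{2}$ into $D_{\hat F}(\widetilde x^{s},x^{k+1})$, while the quadratic residual $\tfrac{\bar L}{2}\|y^{k+1}-x^{k+1}\|^{2}$ plus the $(L+\sigma)$-smoothness term $\tfrac{L+\sigma}{2}\|y^{k+1}-x^{k+1}\|^{2}$ is absorbed into $\tfrac{3L'}{2}\|y^{k+1}-x^{k+1}\|^{2}$, using $(L+\sigma)+\bar L\le 2L'+L'=3L'$.

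With these in hand the rest follows the established template. Starting from convexity of $f$ and the $\hat g_{i}$ to get $\hat F(x^{k+1})-\hat F(x^{\star})\le\langle\nabla\hat F(x^{k+1}),x^{k+1}-x^{\star}\rangle$, replacing $\nabla\hat F(x^{k+1})$ by $\mathbb{E}\widetilde\nabla^{k+1}$, and splitting $x^{k+1}-x^{\star}$ via the momentum identity $x^{k+1}=\tau_{1}z^{k}+\tau_{2}\widetilde x^{s}+(1-\tau_{1}-\tau_{2})y^{k}$, one combines the $y$-update descent inequality with the three-point inequality for the $z$-update (Lemma~\ref{lemma-3} with step $\alpha$ and the $\sigma$-strong convexity of $h$), takes conditional expectation, and rearranges to a one-step recursion of the form
\[
\tfrac{1}{\tau_{1}}\mathbb{E}\big[F(y^{k+1})-F(x^{\star})\big]+\tfrac{1+\alpha\sigma}{2\alpha}\mathbb{E}\big\|z^{k+1}-x^{\star}\big\|^{2}\le\tfrac{1-\tau_{1}-\tau_{2}}{\tau_{1}}\big(F(y^{k})-F(x^{\star})\big)+\tfrac{\tau_{2}}{\tau_{1}}\big(F(\widetilde x^{s})-F(x^{\star})\big)+\tfrac{1}{2\alpha}\big\|z^{k}-x^{\star}\big\|^{2}.
\]
Multiplying the instance $k=2ms+j$ by $(1+\alpha\sigma)^{j}$ and summing over $j=0,\dots,2m-1$, the $z$-terms telescope, and the definition of $\widetilde x^{s+1}$ as the matching $(1+\alpha\sigma)^{j}$-weighted average of the $y^{2ms+j+1}$, together with convexity of $F$, produces a per-epoch contraction of a Lyapunov function combining $F(\widetilde x^{s})-F(x^{\star})$ and $\|z^{2ms}-x^{\star}\|^{2}$. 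It then remains to separate the two cases of $\tau_{1}=\min\{\sqrt{2m\sigma/(3L')},\tfrac12\}$: when the minimum is $\sqrt{2m\sigma/(3L')}$, one finds $\alpha\sigma=\sqrt{\sigma/(6mL')}$, the per-epoch factor is $\approx(1+\alpha\sigma)^{2m}\approx e^{\sqrt{2m\sigma/(3L')}}$, and hence $S=O(\sqrt{L'/(m\sigma)}\,\log(1/\epsilon))$; when it is $\tfrac12$, then $L'=O(m\sigma)$, the per-epoch factor is a fixed constant $>1$, and $S=O(\log(1/\epsilon))$. Since $K=Sm$, $L'=\max\{L,\sum_{i}B_{i}L_{i}\}$ and $\sigma=\sum_{i}b_{i}\mu_{i}$, the two cases merge into $K=O\big((m+\sqrt{(mL+m\sum_{i}B_{i}L_{i})/\sum_{i}b_{i}\mu_{i}})\,\log((F(x^{0})-F(x^{\star}))/\epsilon)\big)$, and the per-iteration oracle cost ($\nabla g_{i}$ once, all $g_{i}$, all $\partial_{i}f$) is read directly off Algorithm~\ref{Algorithm2}.
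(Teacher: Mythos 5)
Your proof takes essentially the same route as the paper: the only genuinely new ingredient there is the variance bound $\mathbb{E}\big[\|\widetilde\nabla^{k+1}-\nabla\hat F(x^{k+1})\|^2\big]\le 2\big(\sum_{i}B_iL_i\big)\big(\hat F(\widetilde x)-\hat F(x^{k+1})-\langle\nabla\hat F(x^{k+1}),\widetilde x-x^{k+1}\rangle\big)$, which you establish by the same three moves (drop the mean, apply Nesterov's Theorem~2.1.5 to each $\hat g_i$ together with $(\partial_i f)^2/B_i\le\partial_i f$, and route the weighted Bregman divergences through the convexity of $f$ and $\sum_i\partial_i f\,\mu_i\ge\sum_i b_i\mu_i$). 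The paper then simply defers the coupling and telescoping argument to the original Katyusha analysis, whereas you spell it out; your sketch of that part is consistent with the standard argument and with the stated complexity.
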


Our analysis follows the same line as in the analysis of the original Katyusha~\cite{allen2017katyusha}. First, we note an upper bound on the variance of the unbiased gradient estimator, which is the following lemma:
\begin{lemma}
\begin{eqnarray*}
\mathbb{E}\left[\left\|\widetilde{\nabla}^{k+1}-\nabla \hat{F}\left(x^{k+1}\right)\right\|^{2}\right] \leq  2\left(\sum_{i=1}^mB_{i}L_i\right)\left(\hat{F}(\widetilde{x})- \hat{F}(x^{k+1})-\left\langle\nabla \hat{F}(x^{k+1}), \widetilde{x}-x^{k+1}\right\rangle\right).
\end{eqnarray*}
\end{lemma}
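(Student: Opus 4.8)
The plan is to bound the variance of $\widetilde{\nabla}^{k+1}$ by isolating the only random term and then invoking the standard smoothness-based inequality. Write $\partial_j := \partial_j f(g_1(x^{k+1}),\dots,g_m(x^{k+1}))$ for brevity. Since the estimator is unbiased for $\nabla\hat F(x^{k+1})$, I would first use the elementary identity $\mathbb{E}\|\zeta-\mathbb{E}\zeta\|^2=\mathbb{E}\|\zeta\|^2-\|\mathbb{E}\zeta\|^2\leq\mathbb{E}\|\zeta\|^2$ applied to the centered random vector. Observe that every term in $\widetilde{\nabla}^{k+1}$ is deterministic \emph{except} $\frac{1}{\pi_i}\partial_i(\nabla\hat g_i(x^{k+1})-\nabla\hat g_i(\widetilde x))$, whose expectation over $i\sim\mathcal D$ is exactly $\sum_{j=1}^m\partial_j(\nabla\hat g_j(x^{k+1})-\nabla\hat g_j(\widetilde x))$. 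Hence
\[
\mathbb{E}\left[\left\|\widetilde{\nabla}^{k+1}-\nabla\hat F(x^{k+1})\right\|^2\right]\le \mathbb{E}_{i\sim\mathcal D}\left[\frac{1}{\pi_i^2}\,\partial_i^2\,\left\|\nabla\hat g_i(x^{k+1})-\nabla\hat g_i(\widetilde x)\right\|^2\right].
\]

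Next I would apply the co-coercivity-type bound (Theorem 2.1.5 in \cite{nesterov2018lectures}) to each convex $L_i$-smooth function $\hat g_i$: for all $u,v$,
\[
\left\|\nabla\hat g_i(u)-\nabla\hat g_i(v)\right\|^2\le 2L_i\left(\hat g_i(v)-\hat g_i(u)-\left\langle\nabla\hat g_i(u),v-u\right\rangle\right),
\]
used here with $u=x^{k+1}$, $v=\widetilde x$. Substituting $\pi_i=\frac{B_iL_i}{\sum_jB_jL_j}$ gives $\frac{1}{\pi_i^2}\partial_i^2\cdot 2L_i = \frac{2L_i\partial_i^2(\sum_jB_jL_j)^2}{B_i^2L_i^2}=\frac{2\partial_i^2(\sum_jB_jL_j)^2}{B_i^2L_i}$, and after taking the expectation $\mathbb{E}_{i\sim\mathcal D}[\cdot]=\sum_i\pi_i(\cdot)$ the factor $\pi_i$ cancels one power of $\sum_jB_jL_j$ and one of $B_iL_i$, leaving $\frac{\partial_i^2}{B_i}\big(\sum_jB_jL_j\big)$. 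Using $0<\partial_i\le B_i$ so that $\partial_i^2/B_i\le\partial_i$, this is at most $\big(\sum_jB_jL_j\big)\partial_i$, and hence the whole sum is bounded by
\[
2\left(\sum_jB_jL_j\right)\sum_{i=1}^m\partial_i\left(\hat g_i(\widetilde x)-\hat g_i(x^{k+1})-\left\langle\nabla\hat g_i(x^{k+1}),\widetilde x-x^{k+1}\right\rangle\right).
\]

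Finally, I would recognize the sum over $i$ as a Bregman-divergence expression for $\hat F$. The key point is that $\hat F(x)=f(g_1(x),\dots,g_m(x))-h(x)$ with $h$ quadratic, and because $f$ is convex and monotone with $\partial_i f\ge b_i>0$, each $\partial_i$-weighted sum of the individual Bregman terms of the $\hat g_i$'s is dominated by the Bregman term of $\hat F$ itself: concretely, chain rule gives $\nabla\hat F(x^{k+1})=\sum_i\partial_i\nabla\hat g_i(x^{k+1})$ (the $h$-contributions and the $\mu_ix$-corrections are arranged to cancel by construction of the estimator), and convexity of $f$ together with the first-order lower bound on each $g_i$ yields
\[
\sum_{i=1}^m\partial_i\left(\hat g_i(\widetilde x)-\hat g_i(x^{k+1})-\left\langle\nabla\hat g_i(x^{k+1}),\widetilde x-x^{k+1}\right\rangle\right)\le \hat F(\widetilde x)-\hat F(x^{k+1})-\left\langle\nabla\hat F(x^{k+1}),\widetilde x-x^{k+1}\right\rangle,
\]
which is the same telescoping argument used in the strong-convexity computation displayed earlier in Section~\ref{sec6}. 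Combining the last two displays with $\sum_jB_jL_j\le L'$ (from the definition $L'=\max\{L,\sum_iB_iL_i\}$) yields the claimed bound. I expect the main obstacle to be the last step: justifying that the $\partial_i$-weighted combination of the per-$g_i$ Bregman terms is controlled by the single Bregman term of $\hat F$. This requires carefully combining the monotone-convexity of $f$ with the strong-convexity splitting $g_i=\hat g_i+\frac{\mu_i}{2}\|\cdot\|^2$ and verifying that the quadratic corrections cancel exactly against $h$ and the $\mu_ix^{k+1}$ terms in the estimator — essentially a bookkeeping reprise of the strong-convexity derivation already given, but now applied to the Bregman (second-order) remainder rather than the function value.
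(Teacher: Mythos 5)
Your proposal follows essentially the same route as the paper's proof: center the estimator and discard the mean via $\mathbb{E}\|\zeta-\mathbb{E}\zeta\|^2\le\mathbb{E}\|\zeta\|^2$, apply the co-coercivity bound to each convex $L_i$-smooth $\hat g_i$, use $\partial_i^2/B_i\le\partial_i$ together with the choice $\pi_i=B_iL_i/\sum_jB_jL_j$, and finally dominate the $\partial_i$-weighted sum of per-function Bregman terms by the Bregman term of $\hat F$ using convexity and monotonicity of $f$ plus $\partial_i\ge b_i$ on the quadratic remainders. One small caution: the parenthetical identity $\nabla\hat F(x^{k+1})=\sum_i\partial_i\,\nabla\hat g_i(x^{k+1})$ is not literally true (the left side subtracts $\bigl(\sum_ib_i\mu_i\bigr)x^{k+1}$ while the right side subtracts $\bigl(\sum_i\partial_i\mu_i\bigr)x^{k+1}$), but your final displayed inequality is nonetheless correct and is exactly what the paper establishes by passing through the unhatted quantities $g_i$ and $F$ (where the chain rule $\nabla F=\sum_i\partial_i\nabla g_i$ does hold) and then applying $\partial_i\ge b_i$ to the $-\frac{\mu_i}{2}\|\widetilde x-x^{k+1}\|^2$ terms.
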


\begin{proof}
Recall that $\hat{g}_i(x)=g_i(x)-\frac{\mu_i}{2}\|x\|^2$ is convex and $L_i$-smooth, and Theorem 2.1.5 of Nesterov's textbook \cite{nesterov2018lectures} suggests that
\begin{equation} \label{nesinq}
\|\nabla \hat{g}_i(x^{k+1})-\nabla \hat{g}_i(\widetilde{x})\|^2\leq 2 L_i \left(\hat{g}_i(\widetilde{x})- \hat{g}_i(x^{k+1})-\left\langle\nabla \hat{g}_i(x^{k+1}), \widetilde{x}-x^{k+1}\right\rangle\right).
\end{equation}

Also note,
\begin{equation}  \label{rv-ineq}
\mathbb{E}\|\zeta-\mathbb{E} \zeta\|^{2}=\mathbb{E}\|\zeta\|^{2}-\|\mathbb{E} \zeta\|^{2}\leq \mathbb{E}\|\zeta\|^{2}
\end{equation}
for any random variable $\zeta$, and by the convexity property,
\begin{equation} \label{conv-F}
    \sum_{i=1}^m\partial_i f(g_1(x^{k+1}),g_2(x^{k+1}),\cdots, g_m(x^{k+1}))\cdot \left(g_i(\widetilde{x})- g_i(x^{k+1})\right)\leq F(\widetilde{x})- F(x^{k+1}).
\end{equation}

Therefore, we have %{\color{red} (We need to introduce the distribution $\mathcal{D}$ ahead of time)}
\begin{eqnarray*}
    & & \mathbb{E}\left[\left\|\widetilde{\nabla}^{k+1}-\nabla \hat{F}\left(x^{k+1}\right)\right\|^{2}\right]\\
    &=& \mathbb{E}_{ i \sim \mathcal{D}}\left[\left\|\sum_{j=1}^m\partial_j f(g_1(x^{k+1}),g_2(x^{k+1}),\cdots, g_m(x^{k+1}))(\nabla \hat{g}_j(\widetilde{x})-\nabla \hat{g}_j(x^{k+1}))\right.\right.\\
    &  & \left.\left.+\frac{1}{\pi_i}\partial_i f(g_1(x^{k+1}),g_2(x^{k+1}),\cdots, g_m(x^{k+1}))(\nabla \hat{g}_i(x^{k+1})-\nabla \hat{g}_i(\widetilde{x}))\right\|^{2}\right]\\
    &\overset{\eqref{rv-ineq}}{\le} & \mathbb{E}_{i \sim \mathcal{D}}\left[\left\|\frac{1}{\pi_i}\partial_i f(g_1(x^{k+1}),g_2(x^{k+1}),\cdots, g_m(x^{k+1}))(\nabla \hat{g}_i(x^{k+1})-\nabla \hat{g}_i(\widetilde{x}))\right\|^{2}\right]\\
    &\overset{\eqref{nesinq}}{\leq} & \sum_{i=1}^m \frac{2 B_{i}L_i}{\pi_i}\partial_i f(g_1(x^{k+1}),g_2(x^{k+1}),\cdots, g_m(x^{k+1}))
    \left(\hat{g}_i(\widetilde{x})- \hat{g}_i(x^{k+1})-\left\langle\nabla \hat{g}_i(x^{k+1}), \widetilde{x}-x^{k+1}\right\rangle\right)\\
    &=&\sum_{i=1}^m \frac{2 B_{i}L_i}{\pi_i}\partial_i f(g_1(x^{k+1}),g_2(x^{k+1}),\cdots, g_m(x^{k+1}))\cdot\\
    & & \left(g_i(\widetilde{x})- g_i(x^{k+1})-\left\langle\nabla g_i(x^{k+1}), \widetilde{x}-x^{k+1}\right\rangle-\frac{\mu_i}{2}\|\widetilde{x}-x^{k+1}\|^2\right)\\
    & \overset{\eqref{conv-F}}{\leq} & 2\left(\sum_{i=1}^mB_{i}L_i\right)\left(F(\widetilde{x})- F(x^{k+1})-
    \left\langle\nabla F(x^{k+1}), \widetilde{x}-x^{k+1}\right\rangle-\frac{\sum_{i=1}^mb_i\mu_i}{2}\|\widetilde{x}-x^{k+1}\|^2\right)\\
    &=&2\left(\sum_{i=1}^mB_{i}L_i\right)\left(\hat{F}(\widetilde{x})- \hat{F}(x^{k+1})-\left\langle\nabla \hat{F}(x^{k+1}), \widetilde{x}-x^{k+1}\right\rangle\right),
\end{eqnarray*}
which is the desired result.
\end{proof}

Using the above lemma, the rest of the analysis is identical to that in \cite{allen2017katyusha}, and we omit the details here for succinctness.
%{\color{red} (I believe it makes sense to offer a simplified sketch of the proof, while acknowledging the details to Zhu's paper.)}

\subsection{Implementation Under an Additional Assumption}

%For the method introduced above, observe it needs much computation in each iteration. In this part, we will avoid compute all $\partial_i f$ in each iteration by introducing a relatively strong assumption.
In Algorithm~\ref{Algorithm2}, computing the unbiased gradient estimator requires the information of all $\partial_i f$, which can be relaxed if a somewhat stronger assumption on the partial derivative holds true (see Assumption~\ref{strass} below).

Let $\nabla_i F(x)=\partial_i f(g_1(x),g_2(x),\cdots, g_m(x))\nabla g_i(x)$. Then $\nabla F(x)=\sum_{i=1}^m\nabla_i F(x)$. Define
\[
\hat{\nabla}_i F(x)=\nabla_i F(x)-b_i\mu_i x.
\]
We have $\nabla\hat{F}(x)=\nabla F(x)-\left(\sum_{i=1}^mb_i\mu_i\right) x=\sum_{i=1}^m\hat{\nabla}_i F(x)$.
\begin{assumption}\label{strass}
For any $1\le i \le m$, we assume that $\hat{\nabla}_i F(x)$ satisfies the following condition
\[
\left\|\hat{\nabla}_i F\left(y\right)-\hat{\nabla}_i F(x)\right\|^{2} \leq 2 l_i\int_0^{1}\left\langle \hat{\nabla}_i F\left(x+t(y-x)\right)-\hat{\nabla}_i F\left(x\right), y-x\right\rangle dt,
\]
where $l_i>0$ is a constant.
\end{assumption}

\begin{remark}
{\rm Theorem 2.1.5 of \cite{nesterov2018lectures} shows that Assumption~\ref{strass} holds when $\hat{\nabla}_i F(x)$ is the gradient of a convex function with gradient Lipschitz constant $l_i$.}
\end{remark}

We use the following way to give a stochastic gradient approximation of $\hat{F}(x^{k+1})$. Let distribution $\mathcal{D}$ be to output $i\in [n]$ with probability $\pi_i=\frac{l_i}{\sum_{j\in [n]} l_j}$, and introduce the gradient estimator as
\begin{equation} \label{new-grad-est}
\widetilde{\nabla}^{k+1} := \nabla \hat{F}(\widetilde{x})+\frac{1}{\pi_i}(\hat{\nabla}_i F(x^{k+1})-\hat{\nabla}_i F(\widetilde{x})).
\end{equation}

Unlike the estimator \eqref{SGD-est}, in \eqref{new-grad-est} %the above case
we only need to compute two $\partial_i f$'s during each iteration. Moreover, we no longer need to store all of $\nabla g_i(\widetilde{x})$.

\begin{theorem}
Under Assumption~\ref{strass} if we use the stochastic gradient estimator \eqref{new-grad-est} in Algorithm~\ref{Algorithm2}, then in
\[
K := S \cdot m=O\left(\left(m+\sqrt{ \frac{m \sum_{i=1}^ml_i }{\sum_{i=1}^mb_i\mu_i}}\right) \cdot \log \frac{F\left(x^{0}\right)-F\left(x^{\star}\right)}{\epsilon}\right)
\]
iterations we will have
\[
\mathbb{E}\left[F(x^K)\right]-F(x^{\star})\leq \epsilon .
\]
In that case, at each iteration we need to compute one $\nabla g_i$, the values of all $g_i$ and one $\partial_i f$.
\end{theorem}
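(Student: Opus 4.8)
The plan is to obtain this theorem from Theorem~\ref{CompOpt} by replacing a single ingredient of its proof, namely the variance bound on the gradient estimator, and then invoking the Katyusha-type analysis of~\cite{allen2017katyusha} unchanged. The only places where the rate is generated are through the strong convexity parameter $\sigma=\sum_{i=1}^m b_i\mu_i$ of $h$, the convexity of $\hat{F}$, and the inequality controlling $\mathbb{E}\|\widetilde{\nabla}^{k+1}-\nabla\hat{F}(x^{k+1})\|^2$ by a Bregman-type quantity of $\hat{F}$ times a ``variance constant''. So I would first record that the estimator~\eqref{new-grad-est} is unbiased: because $i\sim\mathcal{D}$ has $\pi_i=l_i/\sum_j l_j$ and $\sum_{i=1}^m\hat{\nabla}_i F=\nabla\hat{F}$,
\[
\mathbb{E}_{i}\left[\widetilde{\nabla}^{k+1}\right]=\nabla\hat{F}(\widetilde{x})+\sum_{i=1}^m\left(\hat{\nabla}_i F(x^{k+1})-\hat{\nabla}_i F(\widetilde{x})\right)=\nabla\hat{F}(x^{k+1}).
\]

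The substantive step is to prove the analogue of the lemma preceding Theorem~\ref{CompOpt}, with $\sum_i B_iL_i$ replaced by $\sum_i l_i$:
\[
\mathbb{E}\left[\left\|\widetilde{\nabla}^{k+1}-\nabla\hat{F}(x^{k+1})\right\|^2\right]\leq 2\Big(\sum_{i=1}^m l_i\Big)\left(\hat{F}(\widetilde{x})-\hat{F}(x^{k+1})-\left\langle\nabla\hat{F}(x^{k+1}),\widetilde{x}-x^{k+1}\right\rangle\right).
\]
I would begin from $\mathbb{E}\|\zeta-\mathbb{E}\zeta\|^2\leq\mathbb{E}\|\zeta\|^2$ applied to the stochastic increment, which reduces the left-hand side to $\sum_{i=1}^m\pi_i^{-1}\|\hat{\nabla}_i F(x^{k+1})-\hat{\nabla}_i F(\widetilde{x})\|^2$. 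Applying Assumption~\ref{strass} to each term with $x=x^{k+1}$ and $y=\widetilde{x}$, and using that $\pi_i\propto l_i$ makes the weights $l_i/\pi_i=\sum_j l_j$ independent of $i$, the sum collapses through $\sum_i\hat{\nabla}_i F=\nabla\hat{F}$ into
\[
2\Big(\sum_{j=1}^m l_j\Big)\int_0^1\left\langle\nabla\hat{F}\big(x^{k+1}+t(\widetilde{x}-x^{k+1})\big)-\nabla\hat{F}(x^{k+1}),\,\widetilde{x}-x^{k+1}\right\rangle dt,
\]
and the fundamental theorem of calculus applied to $t\mapsto\hat{F}(x^{k+1}+t(\widetilde{x}-x^{k+1}))$ identifies this integral with the asserted Bregman quantity of $\hat{F}$.

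With this lemma substituted, the remainder is verbatim the proof of Theorem~\ref{CompOpt} (which is itself verbatim~\cite{allen2017katyusha}): $h$ is $\sigma$-strongly convex with $\sigma=\sum_i b_i\mu_i$, $\hat{F}$ is convex, and Assumption~\ref{strass} forces $\nabla\hat{F}=\sum_i\hat{\nabla}_i F$ to be gradient-Lipschitz with constant at most $\sum_i l_i$ (each $\hat{\nabla}_i F$ being $l_i$-Lipschitz), so one runs Algorithm~\ref{Algorithm2} with $L'=\sum_{i=1}^m l_i$, the same quantity that now serves as the variance constant. The Katyusha bound $K=S\cdot m=O\big((m+\sqrt{mL'/\sigma})\log\frac{F(x^0)-F(x^\star)}{\epsilon}\big)$ then becomes exactly $O\big((m+\sqrt{m\sum_i l_i/\sum_i b_i\mu_i})\log\frac{F(x^0)-F(x^\star)}{\epsilon}\big)$ with $\mathbb{E}[F(x^K)]-F(x^\star)\leq\epsilon$. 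The per-iteration cost is read off~\eqref{new-grad-est}: forming $\hat{\nabla}_i F(x^{k+1})=\partial_i f(g_1(x^{k+1}),\cdots,g_m(x^{k+1}))\nabla g_i(x^{k+1})-b_i\mu_i x^{k+1}$ needs the scalars $g_1(x^{k+1}),\cdots,g_m(x^{k+1})$ to evaluate $\partial_i f$, one gradient $\nabla g_i(x^{k+1})$ and one partial derivative; $\nabla\hat{F}(\widetilde{x})$ is computed once per outer iteration and stored as a single vector; hence each inner step costs one $\nabla g_i$, all the $g_j$ values, and one $\partial_i f$.

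The main obstacle is precisely the variance bound. In Theorem~\ref{CompOpt} each $\hat{g}_i$ is a genuine convex $L_i$-smooth function, so the per-block co-coercivity inequality is immediate from Theorem~2.1.5 of~\cite{nesterov2018lectures}; here $\hat{\nabla}_i F$ need not be a gradient field at all, and Assumption~\ref{strass} is exactly the surrogate co-coercivity hypothesis put in to replace it. The delicate point is that the sampling law $\pi_i\propto l_i$ is what makes the post-summation weights $l_i/\pi_i$ constant across $i$, so that the per-block integrands add up to the full-gradient integrand and the whole expression turns into a Bregman quantity of $\hat{F}$; for any other $\pi_i$ one is left with an unmatched sum of block-wise terms that cannot be telescoped into $\hat{F}$.
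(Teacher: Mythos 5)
Your proposal is correct and follows essentially the same route as the paper: the paper likewise reduces everything to the single variance lemma, proves it by applying $\mathbb{E}\|\zeta-\mathbb{E}\zeta\|^2\leq\mathbb{E}\|\zeta\|^2$, invoking Assumption~\ref{strass} blockwise, using $\pi_i\propto l_i$ to make the weights $l_i/\pi_i$ constant so the blockwise integrands sum to the full-gradient integrand, and identifying the resulting integral with the Bregman quantity of $\hat{F}$ via the fundamental theorem of calculus, after which the Katyusha analysis is invoked verbatim. Your closing observation about why the sampling law $\pi_i\propto l_i$ is essential for the telescoping is a correct and useful gloss on the same argument.
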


Similar to the analysis before, we only need to provide an upper bound of the MSE of the gradient approximation.
\begin{lemma}
It holds that
\[ \mathbb{E}\left[\left\|\widetilde{\nabla}^{k+1}-\nabla \hat{F}\left(x^{k+1}\right)\right\|^{2}\right] \\
\leq 2\left(\sum_{i=1}^m l_i\right) \left(\hat{F}(\widetilde{x})- \hat{F}(x^{k+1})-\left\langle\nabla \hat{F}(x^{k+1}), \widetilde{x}-x^{k+1}\right\rangle\right) .\]
\end{lemma}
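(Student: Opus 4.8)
The plan is to mimic the proof of the variance bound for Algorithm~\ref{Algorithm2} (the earlier lemma in this subsection), replacing the smoothness inequality \eqref{nesinq} by Assumption~\ref{strass}. There are three moves. First, I would record that the estimator \eqref{new-grad-est} is unbiased: since $i\sim\mathcal D$ with $\pi_i=l_i/\sum_j l_j$,
\[
\mathbb E_i\!\left[\tfrac{1}{\pi_i}\big(\hat\nabla_i F(x^{k+1})-\hat\nabla_i F(\widetilde x)\big)\right]
=\sum_{i=1}^m\big(\hat\nabla_i F(x^{k+1})-\hat\nabla_i F(\widetilde x)\big)
=\nabla\hat F(x^{k+1})-\nabla\hat F(\widetilde x),
\]
so $\mathbb E_i[\widetilde\nabla^{k+1}]=\nabla\hat F(x^{k+1})$. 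Second, applying the identity $\mathbb E\|\zeta-\mathbb E\zeta\|^2=\mathbb E\|\zeta\|^2-\|\mathbb E\zeta\|^2\le\mathbb E\|\zeta\|^2$ with $\zeta=\widetilde\nabla^{k+1}-\nabla\hat F(\widetilde x)=\tfrac{1}{\pi_i}(\hat\nabla_i F(x^{k+1})-\hat\nabla_i F(\widetilde x))$ gives
\[
\mathbb E\!\left[\big\|\widetilde\nabla^{k+1}-\nabla\hat F(x^{k+1})\big\|^2\right]
\le \mathbb E_i\!\left[\tfrac{1}{\pi_i^2}\big\|\hat\nabla_i F(x^{k+1})-\hat\nabla_i F(\widetilde x)\big\|^2\right]
=\sum_{i=1}^m\tfrac{1}{\pi_i}\big\|\hat\nabla_i F(x^{k+1})-\hat\nabla_i F(\widetilde x)\big\|^2 .
\]

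Third, I would invoke Assumption~\ref{strass} term by term with $x=x^{k+1}$ and $y=\widetilde x$, bounding $\|\hat\nabla_i F(\widetilde x)-\hat\nabla_i F(x^{k+1})\|^2$ by $2l_i\int_0^1\langle \hat\nabla_i F(x^{k+1}+t(\widetilde x-x^{k+1}))-\hat\nabla_i F(x^{k+1}),\,\widetilde x-x^{k+1}\rangle\,dt$. Since $\pi_i=l_i/\sum_j l_j$, the factor $l_i/\pi_i=\sum_j l_j$ is constant, so summing over $i$ pulls $2\big(\sum_j l_j\big)$ out front, and inside the integral $\sum_i\hat\nabla_i F(\cdot)=\nabla\hat F(\cdot)$ collapses the sum to
\[
2\Big(\textstyle\sum_{j=1}^m l_j\Big)\int_0^1\big\langle \nabla\hat F\big(x^{k+1}+t(\widetilde x-x^{k+1})\big)-\nabla\hat F(x^{k+1}),\,\widetilde x-x^{k+1}\big\rangle\,dt .
\]
Finally, the fundamental theorem of calculus applied to $t\mapsto\hat F(x^{k+1}+t(\widetilde x-x^{k+1}))$ evaluates $\int_0^1\langle\nabla\hat F(x^{k+1}+t(\widetilde x-x^{k+1})),\widetilde x-x^{k+1}\rangle\,dt=\hat F(\widetilde x)-\hat F(x^{k+1})$, while the remaining term integrates to $\langle\nabla\hat F(x^{k+1}),\widetilde x-x^{k+1}\rangle$, yielding exactly the claimed bound.

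The computation is essentially routine; the only point that needs a little care is the bookkeeping in the third step, namely recognizing that the weights $1/\pi_i$ are exactly what is needed to turn the per-coordinate estimates from Assumption~\ref{strass} into a single quantity governed by $\sum_i l_i$ and by $\nabla\hat F$ rather than the individual $\hat\nabla_i F$; after that the fundamental-theorem-of-calculus identification of the integral with the Bregman-type gap of $\hat F$ is immediate. I do not anticipate any genuine obstacle.
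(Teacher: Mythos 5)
Your proposal is correct and follows essentially the same route as the paper's proof: reduce to the second moment of the scaled difference $\tfrac{1}{\pi_i}(\hat\nabla_i F(x^{k+1})-\hat\nabla_i F(\widetilde x))$ via $\mathbb E\|\zeta-\mathbb E\zeta\|^2\le\mathbb E\|\zeta\|^2$, apply Assumption~\ref{strass} term by term, use $l_i/\pi_i=\sum_j l_j$ to collapse the sum onto $\nabla\hat F$, and finish with the fundamental theorem of calculus. No gaps.
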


\begin{proof}
\begin{eqnarray*}
    & & \mathbb{E}\left[\left\|\widetilde{\nabla}^{k+1}-\nabla \hat{F}\left(x^{k+1}\right)\right\|^{2}\right]\\
    &=&\mathbb{E}_{i \sim \mathcal{D}}\left[\left\|\nabla \hat{F}(\widetilde{x})+\frac{1}{\pi_i}(\hat{\nabla}_i F(x^{k+1})-\hat{\nabla}_i F(\widetilde{x}))-\nabla \hat{F}\left(x^{k+1}\right)\right\|^{2}\right]\\
    &=&\mathbb{E}_{i \sim \mathcal{D}}\left[\left\|\frac{1}{\pi_i}(\hat{\nabla}_i F(x^{k+1})-\hat{\nabla}_i F(\widetilde{x}))+\left(\nabla \hat{F}(\widetilde{x})-\nabla \hat{F}\left(x^{k+1}\right)\right)\right\|^{2}\right]\\
    &\overset{\eqref{rv-ineq}}{\leq} & \mathbb{E}_{i \sim \mathcal{D}}\left[\left\|\frac{1}{\pi_i}(\hat{\nabla}_i F(x^{k+1})-\hat{\nabla}_i F(\widetilde{x}))\right\|^{2}\right]\\
    &\overset{{\tiny \rm Assumption~\ref{strass}}}{\leq} & \sum_{i \in[n]} \frac{2 l_{i}}{\pi_i}\int_0^{1}\left\langle\hat{\nabla}_i F(x^{k+1}+t(\widetilde{x}-x^{k+1}))-\hat{\nabla}_i F(x^{k+1}), \widetilde{x}-x^{k+1}\right\rangle dt\\
    &=& 2\left(\sum_{i=1}^ml_i\right) \int_0^{1}\sum_{i=1}^m\left\langle\hat{\nabla}_i F(x^{k+1}+t(\widetilde{x}-x^{k+1}))-\hat{\nabla}_i F(x^{k+1}), \widetilde{x}-x^{k+1}\right\rangle dt\\
    &=& 2\left(\sum_{i=1}^m l_i\right) \int_0^{1}\left\langle\nabla \hat{F}(x^{k+1}+t(\widetilde{x}-x^{k+1}))-\nabla \hat{F}(x^{k+1}), \widetilde{x}-x^{k+1}\right\rangle dt\\
    &=& 2\left(\sum_{i=1}^m l_i\right) \left(\hat{F}(\widetilde{x})- \hat{F}(x^{k+1})-\left\langle\nabla \hat{F}(x^{k+1}), \widetilde{x}-x^{k+1}\right\rangle\right) .
\end{eqnarray*}
%where the first inequality follows from $\mathbb{E}\|\zeta-\mathbb{E} \zeta\|^{2}=\mathbb{E}\|\zeta\|^{2}-\|\mathbb{E} \zeta\|^{2}\leq \mathbb{E}\|\zeta\|^{2}$ for any random variable $\zeta$. The second inequality applies the assumption \ref{strass}.

\end{proof}

\section{Numerical Experiments} \label{sec7}

In this section, we shall conduct some experiments to test the efficacy of the algorithms being studied in this paper.
The organization is as follows. In Subsection \ref{exp1} we consider two examples to test the performance of generalized SSNM algorithm amid some commonly used algorithms that do not incorporate different Lipschitz gradient parameters. In Subsection \ref{exp3} we test the performance of applying the generalized SSNM algorithm on solving the dual of a constrained strongly convex multi-block optimization model. In Subsection \ref{exp4} we experiment with a general composite problem to test the generalized Katyusha algorithm compared with the standard (deterministic) accelerated gradient method of Nesterov.

\subsection{Weighted Least Squares and Weighted Logistic Regression} \label{exp1}

In this subsection, we apply Algorithm~\ref{Algorithm1} on two different types of finite sum problems,  to compare the generalized SSNM algorithm with other algorithms.

The first example is the weighted Least Square problem in $x\in \mathbb{R}^{n}$.
\[
\frac{1}{m} \sum_{i=1}^{m} w_i\left(a_{i}^{\top}x-b_{i}\right)^2+\frac{\mu}{2}\|x\|^{2}
\]
where $m=10000$, $n=100$, $\mu=10^{-5}$. To test the relevance of varying Lipschitz gradient parameters, we choose to weigh the gradient Lipschitz constants differently. Specifically, we introduce $w$'s consisting of $\sqrt{m}$-number elements $m$ and $(m-\sqrt{m})$ elements of $1$, and $A=(a_{1},a_{2},\cdots, a_{m})^{\top}$ is an $m \times n$ data matrix generated from the standard normal distribution. Then, we scale $A$ to ensure that the estimated gradient Lipschitz parameters of the first term $\frac{1}{m} \sum_{i=1}^{m} w_i\left(a_{i}^{\top}x-b_{i}\right)^2$ is $1$.

The second example is the weighted Logistic Regression problem.
\[
\frac{1}{m} \sum_{i=1}^{m} w_i\log \left(1+\exp \left(-b_{i} a_{i}^{\top} x\right)\right)+\frac{\mu}{2}\|x\|^{2}
\]
where $m=10000$, $n=100$, $\mu=10^{-5}$. We use the same approach to generate and scale matrix $A$.

We have tested the algorithms with the following specifics.
\begin{itemize}
    \item SAGA \cite{defazio2014saga}: we tune the learning rate only.
    \item SSNM \cite{zhou2019direct}: we tune the learning rate $\eta$ and the parameter $\tau$ only.
    \item SVRG \cite{johnson2013accelerating}: we tune the learning rate only.
    \item Katyusha \cite{allen2017katyusha}: we tune the learning rate only.
    \item Generalized SSNM (Algorithm~\ref{Algorithm1}): we tune the learning rate  $\eta$ and the parameter $\lambda$ only.
\end{itemize}

To give a fair comparison, for all the parameters we tune, we select learning rates from the set $\left\{10^{-k}, 3 \times 10^{-k}:k \in \mathbb{Z}\right\}$ times the parameter settings for theoretical analysis in their corresponding paper. The results are presented in Figure \ref{fig1} and Figure \ref{fig2}. Each effective pass counts calculating $\nabla f_i(x)$ for $m$ times. We can observe that the generalized SSNM (Algorithm~\ref{Algorithm1}) converges considerably faster than any other algorithms, once the information about different Lipschitz gradients is available. This behavior is not surprising, of course. It is, however, informative to know the degree to which more exact Lipschitz constants could add to convergence, in the context of stochastic gradient methods.

%So considering the difference of Lipschitz gradient parameters is very important, especially when the weights differ a lot.
\begin{figure}
  \begin{minipage}[t]{0.5\linewidth}
    \centering
    \includegraphics[scale=0.2]{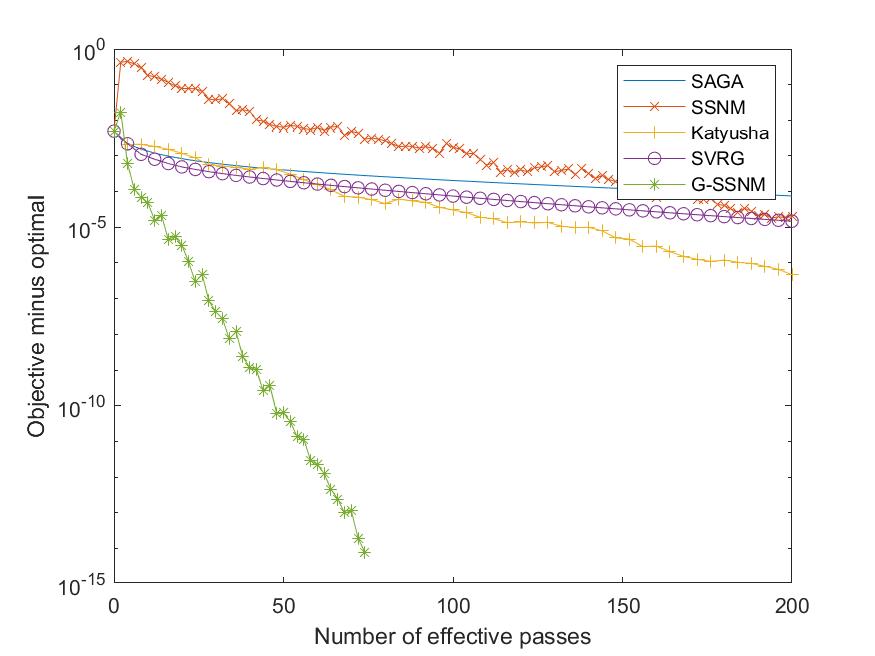}
    \caption{Weighted Least Squares}
    \label{fig1}
  \end{minipage}%
  \begin{minipage}[t]{0.5\linewidth}
    \centering
    \includegraphics[scale=0.2]{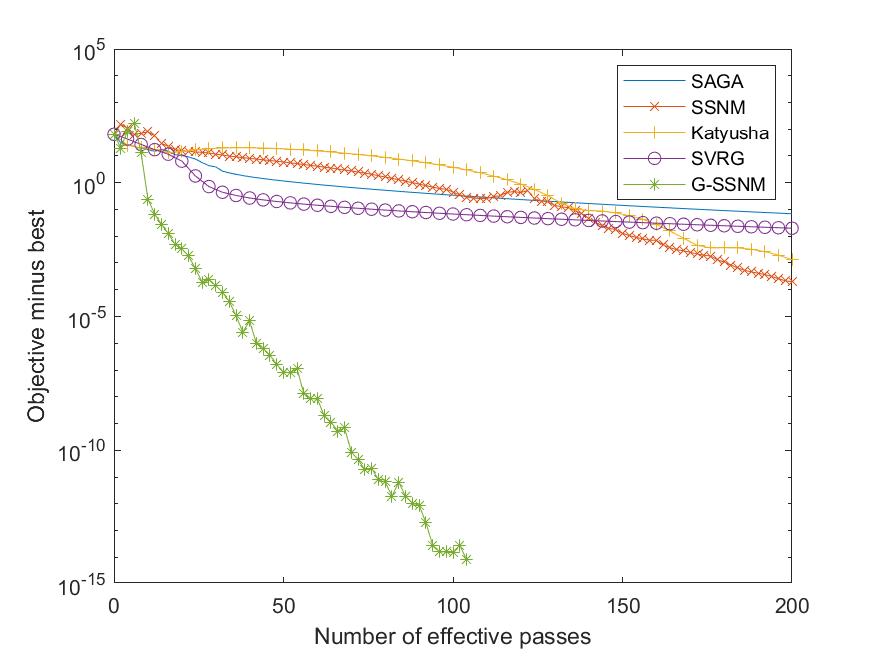}
    \caption{Weighted Logistic Regression}
    \label{fig2}
  \end{minipage}
\end{figure}

\subsection{Strongly Convex Multi-block Optimization} \label{exp3}
In this subsection, we consider the following constrained strongly convex multi-block problem optimization model:
\begin{equation}
\begin{array}{ll}
\min & \sum_{i=1}^{m} \frac 1 2 y_i^{\top}P_i y_i+ a_i^{\top} y_i \\
\text { s.t. } & y_{1}+y_{2}+\cdots+ y_{m}=m b \\
%\quad & y_{i} \in \mathcal{X}_{i}, \quad i=1, \ldots, N
\end{array}
\end{equation}
where $m=10$, $n=10$. Each $P_i\in \mathbb{R}^{n\times n}$ is positive definite, $i=1,2,...,m$. We first generate its eigenvalues following uniform distribution in $[0,1]$ and adjust its minimum into $\mu=10^{-3}$. Then, we generate its orthogonal matrix randomly to get definite symmetrical matrix $P_i\in \mathbb{R}^{n\times n}$, $a_i\in \mathbb{R}^{n}$ and $b\in \mathbb{R}^{n}$ are generated following standard Gaussian distribution. We follow the same rate-tuning procedure as in the numerical experiments conducted in Subsection~\ref{exp1} to tune the parameters.

Figure \ref{fig3} shows the convergence behavior of our method, which is observed to converge to the global solution linearly. We also compare it with  Alternating Direction Method of Multipliers (ADMM) and Alternating Proximal Gradient Method of Multipliers (APGMM) \cite{gao2018low}. Bear in mind that those three algorithms require very different types of subroutines: ADMM requires solving each separated sub-problem exactly; APGMM only requires computing the gradient of each block of the Lagrangian; our new method only needs the gradient information of the conjugate function.

\begin{figure}
  \begin{minipage}[t]{0.5\linewidth}
    \centering
    \includegraphics[scale=0.2]{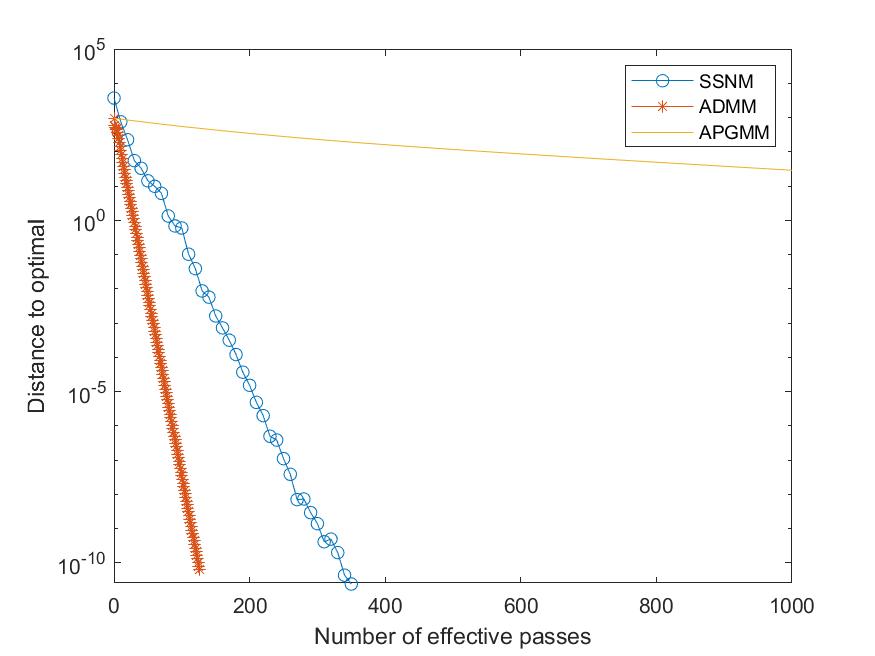}
    \caption{Strongly Convex Multi-block problem}
    \label{fig3}
  \end{minipage}%
  \begin{minipage}[t]{0.5\linewidth}
    \centering
    \includegraphics[scale=0.2]{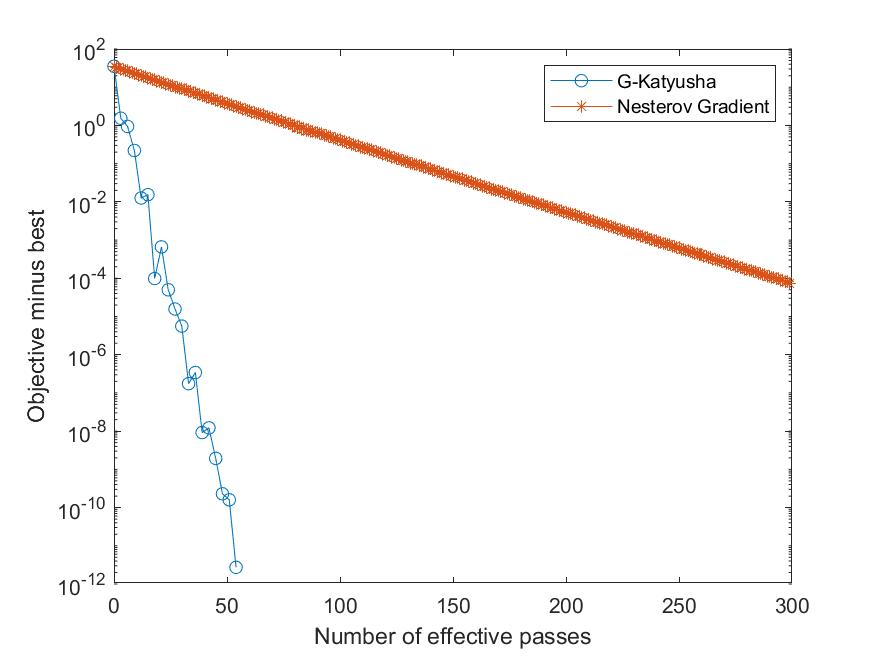}
    \caption{Composite problem}
    \label{fig4}
  \end{minipage}
\end{figure}

\subsection{General Composite Optimization} \label{exp4}

In this subsection, we shall experiment with Algorithm~\ref{Algorithm2} (Generalized Katyusha) with the following composite optimization problem $F(x)=f(g(x))$, where $f:\mathbb{R}^m\rightarrow  \mathbb{R}$ and $g:=(g_1,g_2,\cdots, g_m)^{\top}$, $g_i:\mathbb{R}^n\rightarrow  \mathbb{R}$, $i=1,2,\cdots, m$. In our experiments, we set $m=80$, $n=80$. We generate $f$ by
\[f(y)=y^\top Q y,\]
where $Q\succ 0$ is a randomly doubly positive matrix (that is, each component of $Q$ is positive).
We then generate each $g_i(x)$ as a random quadratic function
\[
g_i(x)=\frac 1 2 x^{\top}P_i x+ a_i^{\top} x+r_i,
\]
where $P_i$ is positive definite, $i=1,2,...,m$. We follow the same procedure as in Subsection~\ref{exp3} to generate $P_i$ with minimum eigenvalue $\mu=10^{-5}$, so that $g_i(x)$ is positive and $\mu$-strongly convex. In our experiments, all of the required parameters from the problem settings are known as the way they are generated. The results are presented in Figure~\ref{fig4}. Each effective pass means calculating $\nabla g_i(x)$ for $m$ times. The generalized Katyusha algorithm converges linearly and is faster than the deterministic Nesterov's accelerated gradient method, as measured by the total amount of gradient computations. As one may observe, Algorithm~\ref{Algorithm2} (Generalized Katyusha) manages to perform $\Theta(m)$ iterations in each effective pass while Nesterov's accelerated gradient method can only perform one iteration per pass.

\section{Conclusion} \label{sec8}

In this paper, we studied the lower and upper bounds on the gradient computation numbers of first order methods for solving the finite sum convex optimization problem, where the condition numbers of the functions are different. For the vanilla finite-sum strongly convex optimization (with varying condition numbers) model, we presented an accelerated stochastic gradient algorithm based on variance reduction, and showed that the algorithm essentially matches the information lower bound that we obtained earlier in the paper, up to a logarithm factor. This approach gives rise to a dual method to solve constrained block-variable optimization via Fenchel duality, yielding a better gradient computation bound than that of its primal counter-part.

Following up on the finite-sum model, we went on to consider a general composite convex optimization model, and proposed a generalized Katyusha algorithm. In this case, though there is still a gap between the upper bound and lower bound in terms of the gradient computation counts, the generalized Katyusha algorithm converges faster than the vanilla accelerated gradient method in terms of the amount of gradient computations required. How to narrow down the theoretical gap between the lower and upper bounds for the general model remains an interesting question for the future research.

As a related model, we also provided a lower bound on the gradient computations for constrained separate block-variable convex optimization. The dual approach introduced earlier provides an upper bound on the gradient computation counts. The upper and lower bounds meet when all the functions share the same condition numbers. In the general case however, a gap exists. It will be interesting to further improve either the upper or the lower bounds. We believe that those questions are fundamental in understanding the information structure for the first-order methods applied to solve composite convex optimization.

%{\color{red} To Professor: It seems like the above two paragraphs are in reverse order.}

%{\color{magenta} %Yes, it may not be in the same chronical order as our presentation, but in the summary it feels more natural to have that order. Some minor issues:

%\begin{itemize}
%\item The emphasize mode {\em mode} better be {\it italic}.

%\item The SIOPT style.

%\end{itemize}
%}

%\begin{lemma}
%Test Lemma.
%\end{lemma}

%\section*{Acknowledgments}
%We would like to acknowledge the assistance of volunteers in putting together this example manuscript and supplement.

\bibliographystyle{siamplain}
\bibliography{sample}
\end{document}